\newcommand{\trans}{{T}}
\newtheorem{example}[theorem]{Example}
\newtheorem{remark}[theorem]{Remark}
\renewcommand{\tensor}[1]{\cel{\mat{#1}}}
\renewcommand{\elm}[1]{\cel{#1}}
\newcommand{\cmPbar}{\cel{\mat{\bar{P}}}}
\newcommand{\mRbar}{\mat{\bar{R}}} 
\renewcommand{\itn}[1]{_{#1}}
\newcommand{\fullkron}[2]{\underbrace{{#1} \kron \cdots \kron {#1}}_{\text{${#2}$ terms}}}
\newcommand{\bigbetasub}{\beta = 
    \min_{{S\subset \langle n \rangle}} 
		  \Biggl\{ 
			  \underbrace{\min_{{k \in \langle n \rangle}} 
				              \Bigl(\min_{j \in S} \sum_{i \in \bar{S}} { \cP_{ijk}}
											+ \min_{j \in \bar{S}}\sum_{i \in S} { \cP_{ijk} }
									  \Bigr)
				}_{\beta_1}
        + \underbrace{\min_{{j \in \langle n \rangle}}
				                \Bigl( \min_{k \in S} \sum_{i \in \bar{S}} { \cP_{ijk} }
												+ \min_{k \in \bar{S}}\sum_{i \in S}{ \cP_{ijk} }
												\Bigr)
					}_{\beta_2} 
			\Biggr\}}
\newcommand{\bigbeta}{\beta = 
    \min_{{S\subset \langle n \rangle}} 
		  \Biggl\{ 
			  {\min_{{k \in \langle n \rangle}} 
				              \Bigl(\min_{j \in S} \sum_{i \in \bar{S}} { \cP_{ijk}}
											+ \min_{j \in \bar{S}}\sum_{i \in S} { \cP_{ijk} }
									  \Bigr)
				}
        + {\min_{{j \in \langle n \rangle}}
				                \Bigl( \min_{k \in S} \sum_{i \in \bar{S}} { \cP_{ijk} }
												+ \min_{k \in \bar{S}}\sum_{i \in S}{ \cP_{ijk} }
												\Bigr)
					}
			\Biggr\}}			
\title{Multilinear PageRank}
\author{David F. Gleich\thanks{Department of Computer Science, Purdue University (dgleich@purdue.edu, yu163@purdue.edu)}
        \and Lek-Heng Lim\thanks{Computational and Applied Mathematics Initiative, Department of Statistics, University of Chicago (lekheng@uchicago.edu)} \and Yongyang Yu\footnotemark[1]}
\begin{document}

\maketitle

\begin{abstract}
In this paper, we first extend the celebrated PageRank modification to a higher-order Markov chain.  Although this system has attractive theoretical properties, it is computationally intractable for many interesting problems. We next study a computationally tractable approximation to the higher-order PageRank vector that involves a system of polynomial equations called multilinear PageRank. This is motivated by a novel ``spacey random surfer'' model, where the surfer remembers bits and pieces of history and is influenced by this information. The underlying stochastic process is an instance of a vertex-reinforced random walk. We develop convergence theory for a simple fixed-point method, a shifted fixed-point method, and a Newton iteration in a particular parameter regime. In marked contrast to the case of the PageRank vector of a Markov chain where the solution is always unique and easy to compute, there are parameter regimes of multilinear PageRank where solutions are not unique and simple algorithms do not converge. We provide a repository of these non-convergent cases that we encountered through exhaustive enumeration and randomly sampling that we believe is useful for future study of the problem.
\end{abstract}

\begin{keywords} 
tensor, hypermatrix, PageRank, graphs, higher-order Markov chains, tensor PageRank, multilinear PageRank, higher-order PageRank, spacey random surfer
\end{keywords}

\pagestyle{myheadings}
\thispagestyle{plain}
\markboth{GLEICH, LIM, AND YU}{MULTILINEAR PAGERANK}

\section{Introduction}

Google devised PageRank to help determine the importance of nodes in a directed graph representing web pages~\cite{page1999-pagerank}. Given a random walk on a directed graph, the PageRank modification builds a new Markov chain that always has a unique stationary distribution. This new random walk models a ``random surfer'' that, with probability $\alpha < 1$ takes a step according to the Markov chain and with probability $1-\alpha$ randomly jumps according to a fixed distribution. If $\mP$ is a \emph{column stochastic} matrix that represents the random walk on the original graph, then the PageRank vector $\vx$ is unique and solves the linear system: 
\[ \vx = \alpha \mP \vx + (1-\alpha) \vv, \]
where $\vv$ is a stochastic vector and $\alpha$ is a probability (Section~\ref{sec:pr} has a formal derivation). The simple Richardson iteration even converges fast for the values of $\alpha$ that are used in practice. 

Although Google described PageRank for the web graph, the same methodology has been deployed in many applications where the importance of nodes provides insight into an underlying phenomena represented by a graph~\cite{morrison2005-generank,freschi2007-proteinrank,Winter-2012-CancerRank,Gleich-preprint-pagerank-beyond}. We find the widespread success of the PageRank methodology intriguing and believe that there are a few important features that contributed to PageRank's success. First and second are the uniqueness and fast convergence. These properties enable reliable and efficient evaluation of the important nodes. Third, in most applications of PageRank, the input graph may contain modeling or sampling errors, and thus, PageRank's jumps are a type of regularization. This may help capture important features in the graph despite the noise.

In this paper, we begin by developing the PageRank modification to a higher-order Markov chain (Section~\ref{sec:hopr}). These higher-order Markov chains model stochastic processes that depend on more history than just the previous state. (We review them formally in Section~\ref{sec:homc}.) In a second-order chain, for instance, the choice of state at the next time step depends on the last two states. However, this structure corresponds to a first-order, or traditional, Markov chain on a tensor-product state-space. We show that higher-order PageRank enjoys the same uniqueness and fast convergence as in the traditional PageRank problem (Theorem~\ref{thm:hopr}); although computing these stationary distributions is prohibitively expensive in terms of memory requirements. 

Recent work by~\citet{Li-2013-tensor-markov-chain} provides an alternative: they consider a rank-1 approximation of these distributions. When we combine the PageRank modification of a higher-order Markov chain with the Li--Ng approximation, we arrive at the \emph{multilinear PageRank problem} (Section~\ref{sec:tensor-pr}).
For the specific case of an $n$-state second-order Markov chain, described by an $n \times n \times n$ transition probability table, the problem becomes finding the solution $\vx$ of the polynomial system of equations: 
\[ \vx = \alpha \mR (\vx \kron \vx) + (1-\alpha) \vv, \]
where $\mR$ is an $n \times n^2$ column stochastic matrix (that represents the probability table), $\alpha$ is a probability, $\kron$ is the Kronecker product, and $\vv$ is a probability distribution over the $n$-states encoded as an $n$-vector.  We have written the equations in this way to emphasize the similarity to the standard PageRank equations. 

One of the key contributions of our work is that the solution $\vx$ has an interpretation as the stationary distribution of a process we describe and call the ``spacey random surfer.'' The spacey random surfer continuously forgets its own immediate history, but does remember the aggregate history and combines the current state with this aggregate history to determine the next state (Section~\ref{sec:process}). This process provides a natural motivation for the multilinear PageRank vector in relationship to the PageRank random surfer. We build on recent advances in vertex reinforced random walks~\cite{Pemantle-1992-vertex-reinforced,Benaim-1997-vrrw} in order to make this relationship precise.


There is no shortage of data analysis methods that involve tensors. These usually go by taking an $m$-way array as an order-$m$ tensor and then performing a tensor decomposition. When $m = 2$, this is often the matrix SVD and the factors obtained give the directions of maximal variation. When $m > 2$, the solution factors lose this interpretation. Understanding the resulting decompositions may be problematic without an identifiability result such as \citet{Anandkumar-preprint-topics}. Our proposal differs in that our tensor represents a probability table for a stochastic process and, instead of a decomposition, we seek an eigenvector that has a natural interpretation as a stationary distribution.   In fact, a general non-negative tensor can be regarded as a contingency table, which can be converted into a multidimensional probability table. These tables may be regarded as the probability distribution of a higher-order Markov chain, just like how a directed graph becomes a random walk. Given the breadth of applications of tensors, our motivation was that the multilinear PageRank vector would be a unique, meaningful stationary distribution that we could compute quickly.

Multilinear PageRank solutions, however, are more complicated . They are not unique for any $\alpha < 1$ as was the case for PageRank, but only when $\alpha < 1/(m-1)$ where $m-1$ is the order of the Markov chain (or $m$ is the order of the underlying tensor) as shown in Theorem~\ref{thm:tpr-unique}. We then consider five algorithms to solve the multilinear PageRank system: a fixed-point method, a shifted fixed-point method, a nonlinear inner-outer iteration, an inverse iteration, and a Newton iteration (Section~\ref{sec:methods}). These algorithms are all fast in the unique regime. Outside that range, we used exhaustive enumeration and random sampling to build a repository of problems that do not converge with our methods. Among the challenging test cases, the inner-outer algorithm and Newton's method has the most reliable convergence properties (Section~\ref{sec:experiments}). Our codes are available for others to use and to reproduce the figures of this manuscript: \url{https://github.com/dgleich/mlpagerank}.

\section{Background} \label{sec:background}
The technical background for our paper includes a brief review of Li and Ng's factorization of the stationary distribution of a higher-order PageRank Markov chain, which we discuss after introducing our notation.

\subsection{Notation}
Matrices are bold, upper-case Roman letter, as in $\mA$; vectors are bold, lower-case Roman letters, as in $\vx$; and tensors are bold, underlined, upper-case Roman letters, as in $\cmP$. We use $\ve$ to be the vector all ones. Individual elements such as $A_{ij}$, $x_i$, or $\cP_{ijk}$ are always written without bold-face. In some of our results, using subscripts is sub-optimal, and we will use Matlab indexing notation instead $A(i,j)$, $x(i)$, or $\cP(i,j,k)$. An order-$m$, $n$-dimensional tensor has $m$ indices that range from $1$ to $n$. We will use $\kron$ to denote the Kronecker product. Throughout the paper, we call a nonnegative matrix $\mA$ column-stochastic if $\sum_i A_{ij} = 1$. A stochastic tensor is a tensor that is nonnegative and where the sum over the first index $i$ is $1$.  We caution our readers that what we call a ``tensor'' in this article really should be called a \textit{hypermatrix}, that is, a specific coordinate representation of a tensor. See \citet{Lim-2014-tensors} for a discussion of the difference between a tensor and its coordinate representation.

We use $S_1, S_2, \dots$ to denote a discrete time stochastic process on the state space $1, \dots, n$. The probability of an event is denoted $\Pr ( S_t = i )$ and $\Pr( S_t = i \mid S_{t-1} = j )$ is the conditional probability of the event. (For those experts in probability, we use this simplifying notation instead of the natural filtration given the history of the process.)

\subsection{PageRank}
\label{sec:pr}
In order to justify our forthcoming use of the term higher-order PageRank, we wish to precisely define a PageRank problem and PageRank vector. The following definition captures the discussions in \citet{langville2006-book}.
\begin{definition}[PageRank] \label{def:pr} 
Let $\mP$ be a column stochastic matrix, let $\alpha$ be a probability smaller than $1$, and let $\vv$ be a stochastic vector. A PageRank vector $\vx$ is the \emph{unique} solution of the linear system: 
\begin{equation} \label{eq:pr}
 \vx = \alpha \mP \vx + (1-\alpha) \vv. 
\end{equation}  We call the set $(\alpha, \mP, \vv)$ a PageRank problem.
\end{definition}

Note that the PageRank vector $\vx$ is equivalently a Perron vector of the matrix: 
\[ \mM = \alpha \mP + (1-\alpha) \vv \ve^\trans \]
under the normalization that $\vx \ge 0$ and $\ve^\trans \vx = 1$.  The matrix $\mM$ is column stochastic and encodes the behavior of the random surfer that, with probability $\alpha$, transitions according to the Markov chain with transition matrix $\mP$, and, with probability $(1-\alpha)$ ``teleports'' according to the fixed distribution $\vv$. When PageRank is used with a graph, then $\mP$ is almost always defined as the random walk transition matrix for that graph. If a graph does not have a valid transition matrix, then there are a few adjustments available to create one~\cite{boldi2007-traps}. 

When we solve for $\vx$ using the power method on the Markov matrix $\mM$ or the Richardson iteration on the linear system~\eqref{eq:pr}, then we iterate: 
\[ \vx\itn{0} = \vv \qquad \vx\itn{k+1} = \alpha \mP \vx\itn{k} + (1-\alpha) \vv. \]
This iteration satisfies the error bound
\[ \normof[1]{\vx\itn{k} - \vx} \le 2 \alpha^k \]
for any stochastic $\vx\itn{0}$. For values of $\alpha$ between $0.5$ and $0.99$, which occur most often in practice, this simple iteration converges quickly.

\subsection{Higher-order Markov chains}
\label{sec:homc}
We wish to extend PageRank to higher-order Markov chains and so we briefly review their properties.
An $m$\textsuperscript{th}-order Markov chain $S$ is a stochastic process that satisfies:
\[
\begin{aligned}
\Pr(S_t=i_1 \mid S_{t-1}=i_2, \dots, S_1 = i_t)
=\Pr(S_t = i_1 \mid S_{t-1}=i_2, \dots, S_{t-m} = i_{m+1}).
\end{aligned}
\]
In words, this means that the future state only depends on the past $m$ states. Although the probability structure of a higher-order Markov chain breaks the fundamental Markov assumption, any higher-order Markov chain can be reduced to a first-order, or standard, Markov chain by taking a Cartesian product of its state space. Consider, for example, a second-order $n$-state Markov chain $S$. Its transition probabilities are $\cP_{ijk}=\Pr(S_{t+1}=i\mid S_t=j, S_{t-1}=k)$. We will represent these probabilities as a tensor $\cmP$. The stationary distribution equation for the resulting first-order Markov chain satisfies
\[
\sum_{k} \cP_{ijk} X_{jk} = X_{ij} ,
\]
where $X_{jk}$ denotes the stationary probability on the product space.  Here, we have induced an $n^2 \times n^2$ eigenvector problem to compute such a stationary distribution. For such first-order Markov chains, Perron-Frobenius theory~\cite{Perron-1907-theorem,Frobenius-1908-theorem,varga1962-book} governs the conditions when the stationary distribution exists. However, in practice for a $100,000 \times 100,000 \times 100,000$ tensor, we need to store $10,000,000,000$ entries in $\mX = [ X_{ij} ]$. This makes it infeasible to work with large, sparse problems. 

\subsection{Li and Ng's approximation}
\label{sec:li}
As a computationally tractable alternative to working with a first-order chain on the product state-space, \citet{Li-2013-tensor-markov-chain} define a new type of stationary distribution for a higher-order Markov chain. Again, we describe it for a second-order chain for simplicity. For each term $X_{ij}$ in the stationary distribution they substitute a product $x_i x_j$, and thus for the matrix $\mX$ they substitute a rank-1 approximation $\mX = \vx \vx^\trans$  where $\sum_i x_i = 1$. Making this substitution and then summing over $j$ yields an eigenvalue expression called an $l^2$-eigenvalue by \citet{Lim-2005-eigenvalues} and called a $Z$-eigenvalue by \citet{Qi-2005-Z-eigenvalues} (one particular type of tensor eigenvalue problem) for $\vx$:
\[
\sum_j\Big(\sum_k \elm{P}_{ijk}x_j x_k \Big) = \sum_j x_i x_j = x_i \quad \Leftrightarrow \quad \tensor{P} \vx^2 = \vx,
\]
where we've used the increasingly common notational convention: 
\[ \textstyle [\tensor{P} \vx^2 ]_i = \sum_{jk} \elm{P}_{ijk} x_j x_k \] from \citet{Qi-2005-Z-eigenvalues}.
All of these results extend beyond second-order chains, in a relatively straightforward manner. Li and Ng present a series of theorems that govern existence and uniqueness for such stationary distributions that we revisit later. 

\section{Higher-order PageRank} \label{sec:hopr}
Recall the PageRank random surfer. With probability $\alpha$, the surfer transitions according to the Markov chain; and with probability $1-\alpha$, the surfer transitions according to the fixed distribution $\vv$. We define a higher-order PageRank by modeling a random surfer on a higher-order chain. With probability $\alpha$, the surfer transitions according to the higher-order chain; and with probability $1-\alpha$, the surfer teleports according to the distribution $\vv$. That is, if $\cmP$ is the transition tensor of the higher-order Markov chain, then the higher-order PageRank chain has a transition tensor $\cmM$ where   
\[ \cM(i,j,\dots,\ell,k) = \alpha \cP(i,j,\dots,\ell,k) + (1-\alpha) v_i. \]
Recall that any higher-order Markov chain can be reduced to a first-order chain by taking a Cartesian product of the state space. We call this the reduced form a higher-order Markov chain and in the following example, we explore the reduced form of a second-order PageRank modification.

\begin{example} \label{ex:simple-3}
Consider the following transition probabilities: 
\[
\cP(\cdot , \cdot ,1)=\begin{bmatrix}
				0 & \tfrac{1}{2} & 0\\
				0 & 0 & 0\\
				1 & \tfrac{1}{2} & 1
			\end{bmatrix};\quad
\cP( \cdot , \cdot ,2)=\begin{bmatrix}
				\frac{1}{2} & 0 & 1 \\
				0 & \frac{1}{2} & 0 \\
				\frac{1}{2} & \frac{1}{2} & 0
			\end{bmatrix};\quad
\cP(\cdot , \cdot ,3)=\begin{bmatrix}
				\frac{1}{2} & \frac{1}{2} & 0 \\
				0 & \frac{1}{2} & 0 \\
				\frac{1}{2} & 0 & 1
			\end{bmatrix}.
\]
Figure~\ref{fig:hyper_tran} shows the state-space transition diagram for the reduced form of the chain before and after its PageRank modification.
\end{example}

We define a higher-order PageRank \emph{tensor} as the stationary distribution of the reduced Markov chain, organized so that $\cX(i,j,\dots,\ell)$ is the stationary probability associated with the sequence of states $\ell \to \cdots \to j \to i$.

\begin{definition}[Higher-order PageRank]
Let $\cmP$ be an order-$m$ transition tensor representing an $(m-1)$\textsuperscript{th} order Markov chain,
$\alpha$ be a probability less than $1$, and $\vv$ be a stochastic vector. Then the higher-order PageRank tensor $\cmX$ is the order-$(m-1)$, $n$-dimensional tensor that solves the linear system: 
\[ \cX(i,j,\dots,\ell) = \alpha \sum_{k} \cP(i,j,\dots,\ell,k) \cX(j, \dots, \ell, k) + (1-\alpha) v_i \sum_{k} \cX(j,\dots,\ell,k). \]
\end{definition}

For the second-order case from Example~\ref{ex:simple-3}, we now write this linear system in a more traditional matrix form in order to make a few observations about its structure. Let $\mX$ be the PageRank tensor (or matrix, in this case). We have:
\begin{equation}
\label{eq:1-step chain}
\text{vec}(\mX) = \left[\alpha \mP + (1-\alpha)\mV \right] \text{vec}(\mX),
\end{equation}
where $\mP, \mV \in \RR^{n^2 \times n^2}$, and $\mV =  \ve^\trans \otimes \eye \otimes \vv$. In this setup, the matrix $\mP$ is sparse and highly structured:
\[ \mP = \bmat{ 
0 & 0   & 0 & 1/2 & 0   & 0 & 1/2 &   0 & 0\\
0 & 0   & 0 & 0   & 0   & 0 & 0   &   0 & 0\\
1 & 0   & 0 & 1/2 & 0   & 0 & 1/2 &   0 & 0\\ 
0 & 1/2 & 0 & 0   & 0   & 0 & 0   & 1/2 & 0\\
0 & 0   & 0 & 0   & 1/2 & 0 & 0   & 1/2 & 0\\
0 & 1/2 & 0 & 0   & 1/2 & 0 & 0   &   0 & 0\\
0 & 0   & 0 & 0   & 0   & 1 & 0   &   0 & 0\\
0 & 0   & 0 & 0   & 0   & 0 & 0   &   0 & 0\\
0 & 0   & 1 & 0   & 0   & 0 & 0   &   0 & 1\\
}. \] 
When $\alpha = 0.85$ and $\vv = (1/3) \ve$, the higher-order PageRank matrix is: 
\[ \mX = \bmat{     0.0411 &   0.0236  &  0.0586 \\
                    0.0062 &   0.0365  &  0.0397 \\
                    0.0761 &   0.0223  &  0.6959 }. \]
More generally, both $\mP$ and $\mV$ have the following structure for the second-order case:
\[
\sbmat{
\elm{P}_{111}, v_1 & 0 & \cdots & 0 & \elm{P}_{112}, v_1 & 0 & \cdots & 0 & \cdots & \elm{P}_{11n}, v_1 & 0 & \cdots & 0\\
\elm{P}_{211}, v_2  & 0 & \cdots & 0 & \elm{P}_{212}, v_2 & 0 & \cdots & 0 & \cdots & \elm{P}_{21n}, v_2 & 0 & \cdots & 0\\
\vdots & \vdots & \ddots & \vdots & \vdots & \vdots & \ddots & \vdots & \ddots & \vdots & \vdots & \ddots &\vdots\\
\elm{P}_{n11}, v_n & 0 & \cdots & 0 & \elm{P}_{n12}, v_n & 0 & \cdots & 0 & \cdots & \elm{P}_{n1n}, v_n & 0 & \cdots & 0\\
\vdots & \vdots &  \ddots & \vdots & \vdots & \vdots & \ddots & \vdots & \ddots & \vdots & \vdots & \ddots &\vdots\\
0 & \cdots & 0 & \elm{P}_{1n1}, v_1 & 0 & \cdots & 0 & \elm{P}_{1n2}, v_1 & \cdots & 0 & \cdots & 0 & \elm{P}_{1nn}, v_1\\ 
0 & \cdots & 0 & \elm{P}_{2n1}, v_2 & 0 & \cdots & 0 & \elm{P}_{2n2}, v_2 & \cdots & 0 & \cdots & 0 & \elm{P}_{2nn}, v_2\\ 
\vdots & \ddots & \vdots & \vdots & \vdots & \ddots & \vdots & \vdots & \ddots & \vdots & \ddots & \vdots &\vdots\\
0 & \cdots & 0 & \elm{P}_{nn1}, v_n & 0 & \cdots & 0 & \elm{P}_{nn2}, v_n & \cdots & 0 & \cdots & 0 & \elm{P}_{nnn}, v_n\\
}. 
\]

\begin{figure}[!ht]
\centering        
\subfigure[The higher-order Markov chain]{\includegraphics[width=0.45\linewidth]{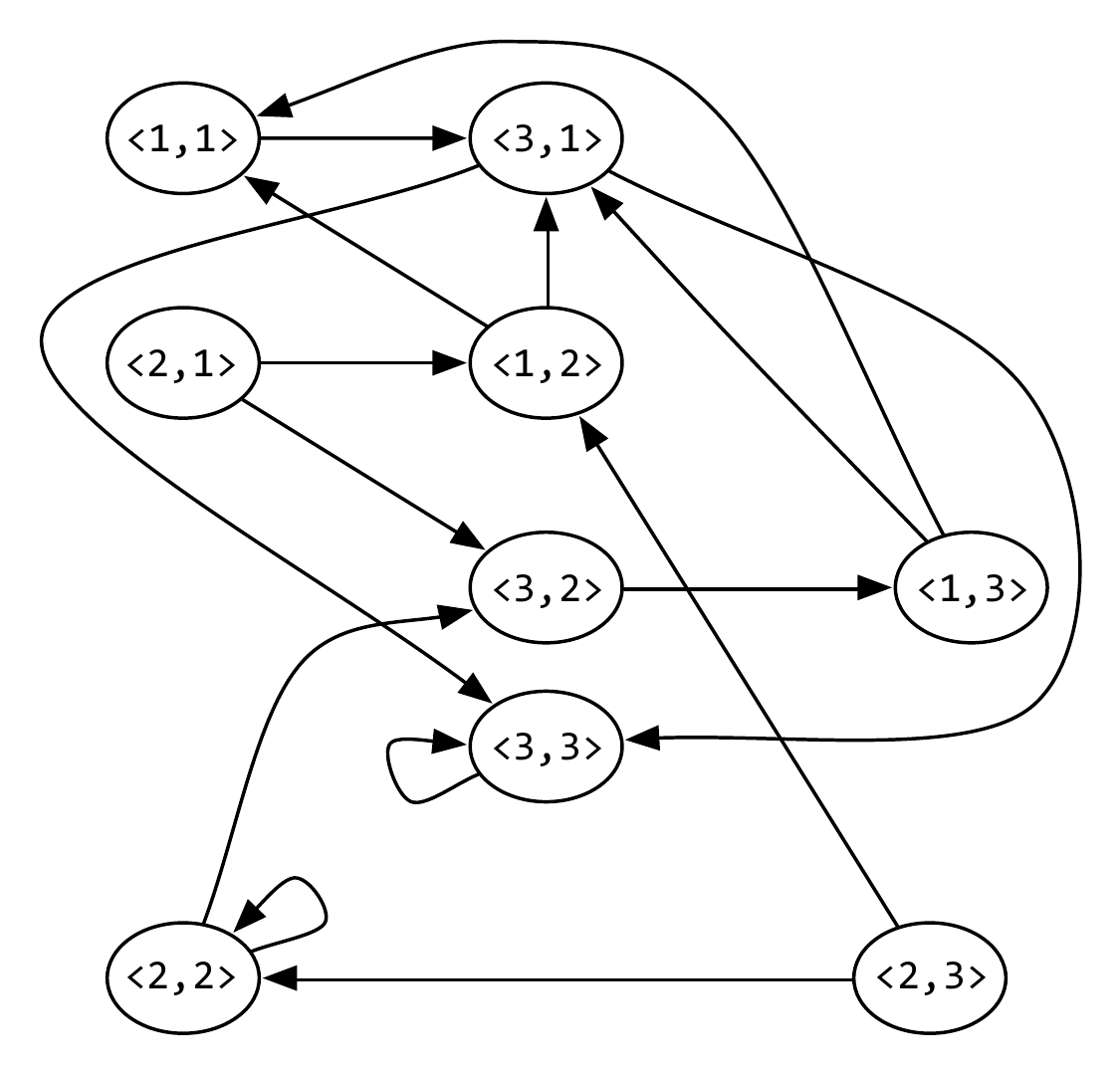}}
\subfigure[The higher-order PageRank chain]{\includegraphics[width=0.45\linewidth]{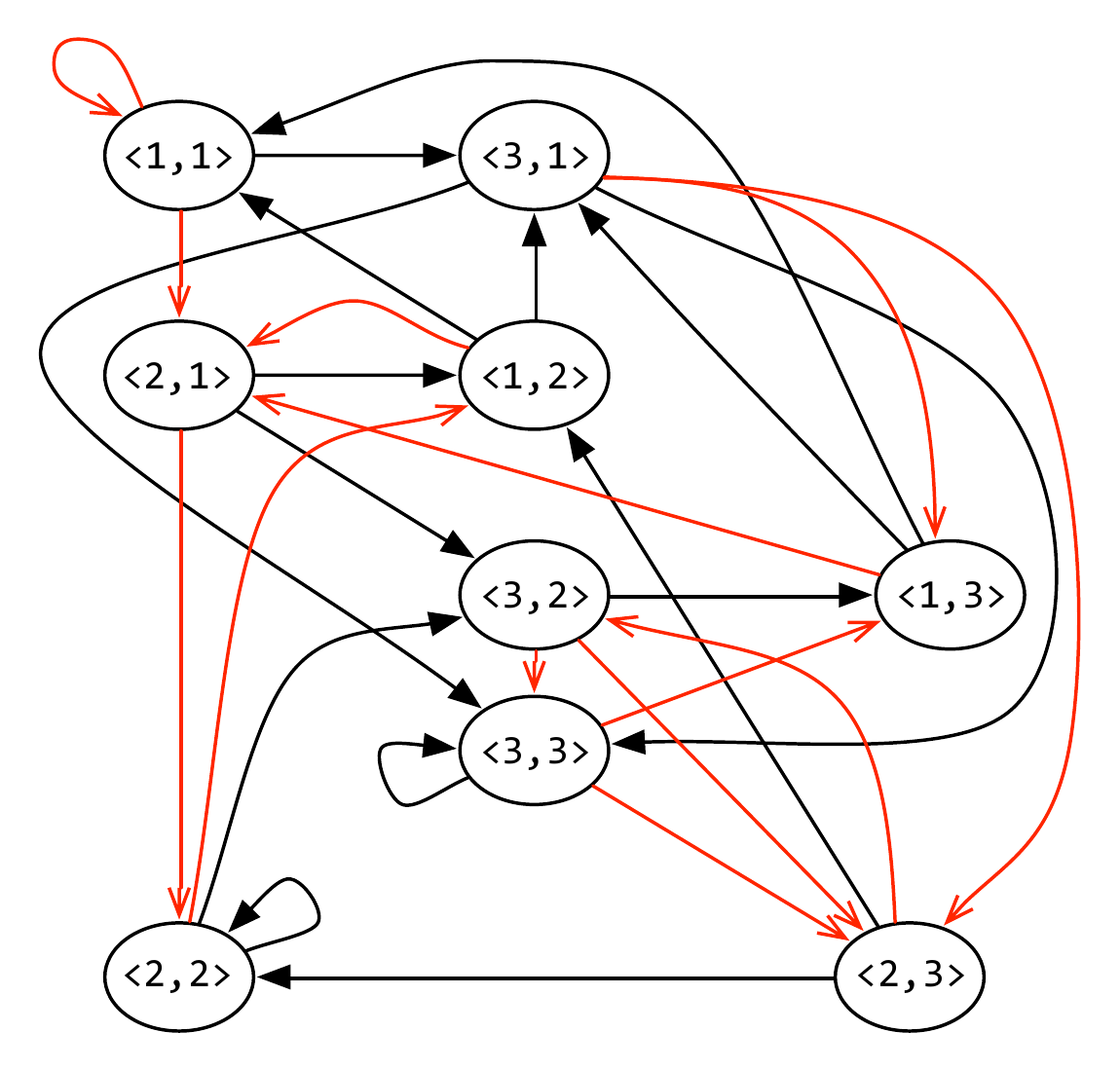}}
\caption{The state space transitions for a higher-order Markov chain on the product-space and the PageRank modification of that same chain with new transitions indicated in red. The transitions for both chains must satisfy $\langle j,k \rangle \to \langle i,j \rangle$. Note that, unlikely the PageRank modification of a first-order Markov chain, the reduced form of the higher-order PageRank chain does not have a complete set of transitions. For instance, there is no transition between $\langle 2,3 \rangle$ and $\langle 1,3 \rangle$. }
\label{fig:hyper_tran}
\end{figure}

In the remainder of this section, we wish to show the relationship between the reduced form of a higher-order PageRank chain and the definition of the PageRank problem (Definition~\ref{def:pr}). This is not as trivial as it may seem! For instance, in the second-order case, equation~\eqref{eq:1-step chain} is not of the correct form for Definition~\ref{def:pr}. But a slight bit of massaging produces the equivalence. 

Consider the vectorized equation for the stationary distribution matrix for the second-order case (from equation~\ref{eq:1-step chain}) as: 
\[ \tvec(\mX) = \underbrace{[\alpha \mP + (1-\alpha) \mV]}_{\mM} \tvec(\mX). \] 
Our goal is to derive a PageRank problem in the sense of Definition~\ref{def:pr} to find $\tvec(\mX)$. As it turns out, $\mM^2$ will give us this PageRank problem. The idea is this: in the first-order PageRank problem we lose all history after a single teleportation step by construction. In this second-order PageRank problem, we keep around one more state of history, hence, two steps of the second-order chain are required to see the effect of teleportation as in the standard PageRank problem. Formally, the matrix $\mM^2$ can be written in terms of matrix $\mP$ and $\mV$, i.e., 
\[
\mM^2 = \alpha^2 \mP^2 + \alpha(1-\alpha)\mP\mV + \alpha(1-\alpha)\mV\mP+(1-\alpha)^2\mV^2.
\]
We now show that $\mV^2 = (\vv \kron \vv)(\ve^\trans \kron \ve^\trans)$ by exploiting two properties of the Kronecker product: $(\mA \kron \mB)(\mC \kron \mD) = (\mA \mC) \kron (\mB \mD)$ and $\va^\trans \kron \vb = \vb \va^\trans$. Note that: 
\[ \mV^2 = (\ve^\trans \kron \mI \kron \vv) (\ve^\trans \kron \mI \kron \vv) = [ \ve^\trans (\ve^\trans \kron \mI) ] \kron [ (\mI \kron \vv) \vv] = (\ve^\trans \kron \ve^\trans) \kron (\vv \kron \vv). \]
This enables us to write a PageRank equation for $\tvec(\mX)$: 
\[ \begin{aligned}
 \tvec(\mX) & = \mM^2 \tvec(\mX) \\
            & = \alpha (2-\alpha) \underbrace{\left[ \tfrac{\alpha}{2-\alpha} \mP^2 + \tfrac{1-\alpha}{2-\alpha} (\mP \mV + \mV \mP) \right] }_{\mP_{\text{pr}}} \tvec(\mX) + (1 - 2 \alpha + \alpha^2) \vv \kron \vv, \end{aligned}\]
where we used the normalization $\ve^\trans \tvec(\mX) = 1$. Thus we conclude:
\begin{lemma} Consider a second-order PageRank problem $\alpha, \cmP, \vv$. Let $\mP$ be the matrix for the reduced form of $\cmP$. Let $\mM = \alpha \mP + (1-\alpha) \mV$ be the transition matrix for the vector representation of the stationary distribution $\mX$. This stationary distribution is the PageRank vector of a PageRank problem $(2\alpha-\alpha^2, \mP_{pr}, \vv\kron \vv)$ in the sense of Definition~\ref{def:pr} with
\[
\mP_{\text{pr}} =\frac{\alpha}{2-\alpha}\mP^2 + \frac{1-\alpha}{2-\alpha}\mP\mV + \frac{1-\alpha}{2-\alpha}\mV\mP.
\]
\end{lemma}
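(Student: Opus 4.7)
The plan is to start from the first-order stationary equation $\tvec(\mX) = \mM\,\tvec(\mX)$ and iterate it once so that $\tvec(\mX) = \mM^2\,\tvec(\mX)$, then expand $\mM^2 = (\alpha\mP + (1-\alpha)\mV)^2$ into four terms
\[
\mM^2 \;=\; \alpha^2\mP^2 \;+\; \alpha(1-\alpha)(\mP\mV + \mV\mP) \;+\; (1-\alpha)^2 \mV^2.
\]
The motivation for squaring is the one already flagged in the text: a single application of $\mM$ only erases one step of history, so to recover a first-order-style teleportation term (which fully forgets the state) one needs two steps of the second-order chain.

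The algebraic crux is the identity $\mV^2 = (\vv \kron \vv)(\ve^\trans \kron \ve^\trans)$. I would derive this by grouping the first factor of $\mV$ as $(\ve^\trans) \kron (\eye \kron \vv)$ and the second as $(\ve^\trans \kron \eye) \kron (\vv)$, then applying the mixed-product law $(\mA \kron \mB)(\mC \kron \mD) = (\mA\mC) \kron (\mB\mD)$ to get
\[
\mV^2 \;=\; \bigl[\ve^\trans(\ve^\trans \kron \eye)\bigr] \kron \bigl[(\eye \kron \vv)\vv\bigr] \;=\; (\ve^\trans \kron \ve^\trans) \kron (\vv \kron \vv),
\]
and finally rewriting this Kronecker product of a $1\times n^2$ row and an $n^2\times 1$ column as the rank-1 outer product $(\vv \kron \vv)(\ve^\trans \kron \ve^\trans)$ via $\va^\trans \kron \vb = \vb\,\va^\trans$.

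With $\mV^2$ in hand, applying $\mM^2$ to the stochastic vector $\tvec(\mX)$, which satisfies $(\ve^\trans \kron \ve^\trans)\tvec(\mX) = 1$, collapses $(1-\alpha)^2\mV^2\,\tvec(\mX)$ to $(1-\alpha)^2(\vv \kron \vv)$. Using the identity $(1-\alpha)^2 = 1 - (2\alpha - \alpha^2)$ and factoring $2\alpha - \alpha^2 = \alpha^2 + 2\alpha(1-\alpha)$ out of the surviving matrix terms yields
\[
\tvec(\mX) \;=\; (2\alpha - \alpha^2)\,\mP_{\text{pr}}\,\tvec(\mX) \;+\; \bigl(1 - (2\alpha - \alpha^2)\bigr)\,(\vv \kron \vv),
\]
with the claimed $\mP_{\text{pr}}$.

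The remaining step, which I view as the only minor obstacle, is verifying that the triple $(2\alpha - \alpha^2,\,\mP_{\text{pr}},\,\vv\kron\vv)$ is a legitimate PageRank problem in the sense of Definition~\ref{def:pr}. I would check that $2\alpha - \alpha^2 < 1$ whenever $\alpha < 1$ (immediate from $(1-\alpha)^2 > 0$), that $\vv \kron \vv$ is stochastic (since $\ve^\trans \vv = 1$), and that $\mP_{\text{pr}}$ is column stochastic, which follows because $\mP$ and $\mV$ are each column stochastic, hence so are $\mP^2$, $\mP\mV$, and $\mV\mP$, and the coefficients $\alpha/(2-\alpha)$ and $2(1-\alpha)/(2-\alpha)$ form a convex combination summing to one. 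Uniqueness of the PageRank vector then certifies that the $\tvec(\mX)$ produced by the reduced second-order chain is in fact the PageRank vector of the stated problem.
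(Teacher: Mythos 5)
Your proposal is correct and follows essentially the same route as the paper: squaring $\mM$, expanding into the four terms, collapsing $\mV^2 = (\vv \kron \vv)(\ve^\trans \kron \ve^\trans)$ via the mixed-product law and $\va^\trans \kron \vb = \vb\va^\trans$, and using the normalization $\ve^\trans\tvec(\mX)=1$ to isolate the teleportation term. Your explicit check that $\mP_{\text{pr}}$ is column stochastic and that the coefficients form a convex combination is a small but welcome addition the paper only addresses in the general order-$m$ theorem.
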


And we generalize: 

\begin{theorem} 
Consider a higher-order PageRank problem $\alpha, \cmP, \vv$ where $\cmP$ is an order-$m$ tensor.  Let $\mP$ be the matrix for the reduced form of $\cmP$. Let $\mM = \alpha \mP + (1-\alpha) \mV$ be the transition matrix for the vector representation of the order-$(m-1)$, $n$-dimensional stationary distribution tensor $\cmX$. This stationary distribution is equal to the PageRank vector of the PageRank problem 
\[ (1-(1-\alpha)^{m-1}, \mP_{\text{pr}}, \fullkron{\vv}{m-1}), \text{ where }
\mP_{\text{pr}} = \frac{\mM^{m-1}-(1-\alpha)^{m-1}\mV^{m-1}}{1-(1-\alpha)^{m-1}}. \]
\end{theorem}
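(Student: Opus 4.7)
My plan is to mimic the second-order argument in the preceding lemma. Stationarity gives $\tvec(\cmX) = \mM\tvec(\cmX)$ and hence $\tvec(\cmX) = \mM^{m-1}\tvec(\cmX)$, so I would first expand
\[ \mM^{m-1} = \bigl(\alpha\mP + (1-\alpha)\mV\bigr)^{m-1} = (1-\alpha)^{m-1}\mV^{m-1} + \mN, \]
where $\mN$ collects all remaining noncommutative products of $\mP$ and $\mV$ (there are $2^{m-1}-1$ of them, since $\mP$ and $\mV$ need not commute). Because $\mP,\mV \ge 0$ entrywise, $\mN$ is automatically nonnegative, which I will need at the end.

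The crucial structural step is the rank-one identity
\[ \mV^{m-1} = \fullkron{\vv}{m-1}\, \ve^\trans, \]
where $\ve$ is the all-ones vector of length $n^{m-1}$. I would argue this directly from the reduced chain: each application of $\mV$ drops the oldest coordinate of the length-$(m-1)$ history and prepends a fresh coordinate drawn from $\vv$, so after $m-1$ applications the entire history has been replaced by $m-1$ independent $\vv$-draws, regardless of the starting state---which is exactly what a rank-one matrix with column $\fullkron{\vv}{m-1}$ encodes. An alternative (and more laborious) route is to iterate the Kronecker identities $(\mA\kron\mB)(\mC\kron\mD)=(\mA\mC)\kron(\mB\mD)$ and $\va^\trans\kron\vb = \vb\va^\trans$ from the paper's $m=3$ computation and induct on $m$.

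Given this identity, the remainder is bookkeeping. Since $\tvec(\cmX)$ is stochastic, $\mV^{m-1}\tvec(\cmX) = \fullkron{\vv}{m-1}$, which yields
\[ \tvec(\cmX) = \mN\,\tvec(\cmX) + (1-\alpha)^{m-1}\fullkron{\vv}{m-1}. \]
Setting $\beta = 1-(1-\alpha)^{m-1}$ and $\mP_{\text{pr}} = \mN/\beta$ puts this into the PageRank form $\tvec(\cmX) = \beta\,\mP_{\text{pr}}\tvec(\cmX) + (1-\beta)\fullkron{\vv}{m-1}$. To invoke Definition~\ref{def:pr} I would then verify that the triple $(\beta, \mP_{\text{pr}}, \fullkron{\vv}{m-1})$ is valid: $\beta \in [0,1)$ whenever $\alpha \in [0,1)$; the teleport vector is stochastic as a Kronecker product of stochastic vectors; nonnegativity of $\mP_{\text{pr}}$ is inherited from $\mN$; and column stochasticity follows from $\ve^\trans\mM^{m-1} = \ve^\trans\mV^{m-1} = \ve^\trans$ (both $\mM$ and $\mV$ are column stochastic), which gives $\ve^\trans\mN = \beta\,\ve^\trans$ and hence $\ve^\trans \mP_{\text{pr}} = \ve^\trans$.

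The main obstacle I foresee is the rank-one identity for $\mV^{m-1}$. A pure Kronecker-algebra route would rely on writing $\mV = \ve^\trans \kron \mI^{\otimes(m-2)} \kron \vv$ in general and keeping track of the ordering convention on the reduced state space $\cmX(i,j,\dots,\ell)$ as the two non-conforming factors $\ve^\trans$ and $\vv$ are shuffled through successive products; this is error-prone. For that reason I would lead with the probabilistic argument above, which sidesteps all coordinate ambiguity and makes the conclusion transparent.
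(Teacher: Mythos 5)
Your proposal is correct and follows essentially the same route as the paper: reduce to the $(m-1)$-step chain, split $\mM^{m-1}$ into $(1-\alpha)^{m-1}\mV^{m-1}$ plus a nonnegative remainder, identify $\mV^{m-1}$ as the rank-one matrix with every column equal to $\fullkron{\vv}{m-1}$, and read off the PageRank triple. The one place you diverge is the justification of that rank-one identity: the paper grinds through the Kronecker mixed-product rule on $\mV = \ve^\trans \kron (\fullkron{\mI}{m-2}) \kron \vv$, whereas you argue probabilistically that each teleport step shifts out the oldest history coordinate and prepends an independent $\vv$-draw, so $m-1$ steps erase the starting state entirely; both are valid, and your observation that the symmetry of $\vv\kron\cdots\kron\vv$ makes the argument immune to vectorization-ordering conventions is a genuine simplification. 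Your explicit verification that $\ve^\trans\mN = \bigl(1-(1-\alpha)^{m-1}\bigr)\ve^\trans$ is also slightly more careful than the paper's one-line remark on the stochasticity of $\mP_{\text{pr}}$.
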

\begin{proof}
We extend the previous proof as follows. The matrix $\mM$ is nonnegative and has only a single recurrent class of all nodes consisting of all nodes in the \emph{reach} of the set of non-zero entries in $v_i$. Thus, the stationary distribution is unique. We need to look at the $m-1$ step transition matrix to find the PageRank problem. Consider $\vec{\cmX}$ as the stationary distribution eigenvector of the $m-1$ step chain: 
\[ \tvec(\cmX) = \mM \tvec(\cmX) = \mM^{m-1} \tvec(\cmX). \]
The matrix $\mM^{m-1}$ can be written in terms of matrix $\mP$ and $\mV$, i.e.,
\[
\mM^{m-1} = \bigl( (\alpha \mP + (1-\alpha)\mV)^{m-1} - (1-\alpha)^{m-1}\mV^{m-1} \bigr) + (1-\alpha)^{m-1}\mV^{m-1}.
\]
The matrix $\mV$ has the structure 
\[ \mV = \ve^\trans \kron (\fullkron{\mI}{m-2})\kron \vv. \]
We now expand $\mV^{m-1}$ using the property the property of Kronecker products $(\mA \kron \mB)(\mC \kron \mD) = (\mA \mC) \kron (\mB \mD)$, repeatedly: 
\[ \begin{aligned}
\mV^{m-1} & = \Bigl[ \ve^\trans \kron (\fullkron{\mI}{m-2})\kron \vv \Bigr] \cdots \Bigl[ \ve^\trans \kron (\fullkron{\mI}{m-2})\kron \vv \Bigr] \\
& = 
	\Bigl[ \ve^\trans(\ve^\trans \kron \mI)(\ve^\trans \kron \mI \kron \mI) \cdots (\ve^\trans \kron \fullkron{\mI}{m-2} ) \Bigr] \kron  \\
	& \qquad \Bigl[ (\fullkron{\mI}{m-2} \vv ) (\mI \kron \mI \kron \vv) (\mI \kron \vv) \vv \Bigr]\\
& = (\fullkron{\ve^\trans}{m-1}) \kron (\fullkron{\vv}{m-1}) \\
 &  = (\fullkron{\vv}{m-1})(\fullkron{\ve^\trans}{m-1}).
\end{aligned}
\]
At this point, we are essentially done as we have shown that the \emph{stochastic} $\mM^{m-1}$ has the form $\mM^{m-1} = \mZ + (1-\alpha)^{m-1} (\vv \kron \cdots \kron \vv)(\ve^\trans \kron \cdots \kron \ve^\trans).$ The statements in the theorem follow from splitting $\mM^{m-1} = \alpha_{\text{pr}} \mP_{\text{pr}} + (1-\alpha_{\text{pr}}) (\vv \kron \cdots \kron \vv)(\ve^\trans \kron \cdots \kron \ve^\trans)$, that is,
\[ \alpha_{\text{pr}} = 1- (1-\alpha)^{m-1} \qquad \mP_{\text{pr}} = \frac{1}{\alpha_{\text{pr}}} \left( \mM^{m-1} - (1-\alpha) \mV^{m-1} \right). \]
The matrix $\mP_{\text{pr}}$ is stochastic because the final term in the expansion of $\mM^{m-1}$ is $(1-\alpha) \mV^{m-1}$, thus, the remainder is a nonnegative matrix with column sums equal to a single constant less than $1$.
\end{proof}

\begin{corollary} \label{thm:hopr}
The higher-order PageRank stationary distribution tensor $\cmX$ always exists and is unique. Also, the standard PageRank iteration will result in a $1$-norm error of $2\bigl(1-(1-\alpha)^{m-1}\bigr)^k$ after $(m-1)k$ iterations.
\end{corollary}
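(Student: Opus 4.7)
The plan is to deduce the corollary directly from the preceding theorem, which exhibits $\tvec(\cmX)$ as the PageRank vector of a standard PageRank problem $(\alpha_{\text{pr}}, \mP_{\text{pr}}, \fullkron{\vv}{m-1})$ with $\alpha_{\text{pr}} = 1-(1-\alpha)^{m-1}$. Since $\alpha < 1$ implies $\alpha_{\text{pr}} < 1$, and $\mP_{\text{pr}}$ was shown to be column stochastic, the hypotheses of Definition~\ref{def:pr} are satisfied. Existence and uniqueness of $\cmX$ then follow from the standard fact cited in Section~\ref{sec:pr} that a PageRank problem has a unique solution. The tensor $\cmX$ is simply the reshape of the resulting $n^{m-1}$-vector back into an order-$(m-1)$, $n$-dimensional tensor.

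For the error bound, I would invoke the standard Richardson error estimate recalled in Section~\ref{sec:pr}: if we iterate
\[ \vy\itn{k+1} = \alpha_{\text{pr}} \mP_{\text{pr}} \vy\itn{k} + (1-\alpha_{\text{pr}}) (\fullkron{\vv}{m-1}), \]
starting from any stochastic $\vy\itn{0}$, then $\normof[1]{\vy\itn{k}-\tvec(\cmX)} \le 2\alpha_{\text{pr}}^k = 2\bigl(1-(1-\alpha)^{m-1}\bigr)^k$. The key observation is that one step of this Richardson iteration corresponds to one multiplication by $\mM^{m-1}$ (up to the teleportation term), which in turn is exactly $m-1$ steps of the natural power iteration on $\mM$ applied to $\tvec(\cmX)$. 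Hence after $(m-1)k$ applications of $\mM$ we have performed $k$ steps of the equivalent standard PageRank iteration and inherit its error bound.

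The only mild subtlety, and the place I would be careful, is verifying that the ``natural'' iteration on the higher-order chain, $\vy\itn{t+1} = \mM\vy\itn{t}$, actually agrees with the Richardson iteration on the derived PageRank problem after every $(m-1)$ steps. This reduces to re-using the identity proved inside the theorem,
\[ \mM^{m-1} = \alpha_{\text{pr}}\mP_{\text{pr}} + (1-\alpha_{\text{pr}})(\fullkron{\vv}{m-1})(\fullkron{\ve^\trans}{m-1}), \]
and observing that $(\fullkron{\ve^\trans}{m-1})\vy\itn{t} = 1$ whenever $\vy\itn{t}$ is a stochastic vector, so the rank-one teleportation block contributes exactly $(1-\alpha_{\text{pr}})\fullkron{\vv}{m-1}$ at every such step. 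Since this stochasticity is preserved by $\mM$, an easy induction shows the $(m-1)$-step iterates coincide with Richardson iterates, and the claimed error bound of $2(1-(1-\alpha)^{m-1})^k$ after $(m-1)k$ iterations of the higher-order chain follows.
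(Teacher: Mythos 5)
Your proposal is correct and follows exactly the route the paper intends: the corollary is stated as an immediate consequence of the preceding theorem, which recasts the higher-order stationary distribution as the PageRank vector of the problem $(1-(1-\alpha)^{m-1}, \mP_{\text{pr}}, \vv\kron\cdots\kron\vv)$, after which uniqueness and the $2\alpha_{\text{pr}}^k$ bound come from the standard PageRank facts recalled in Section~\ref{sec:pr}. Your careful verification that $(m-1)$ applications of $\mM$ to a stochastic iterate reproduce one Richardson step of the derived problem is exactly the detail the paper leaves implicit, and it is handled correctly.
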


Hence, we retain all of the attractive features of PageRank in the higher-order PageRank problem. The uniqueness and convergence results in this section are not overly surprising and simply clarify the relationship between the higher-order Markov chain and its PageRank modification.

\section{Multilinear PageRank}
\label{sec:tensor-pr}

The tensor product structure in the state-space and the higher-order stationary distribution make the straightforward approaches of the previous section difficult to scale to large problems, such as those encountered in modern bioinformatics and social network analysis applications. The scalability limit is the memory required. Consider an $n$-state, second-order PageRank chain: $(\alpha, \cmP, \vv)$. It requires $O(n^2)$ memory to represent the stationary distribution, which quickly grows infeasible as $n$ scales. To overcome this scalability limitation, we consider the Li and Ng approximation to the stationary distribution with the additional assumption:

\textsc{Assumption}.
\emph{There exists a method to compute $\cmP \vx^2$ that works in time proportional to the memory used for to represent $\cmP$.}

This assumption mirrors the fast matrix-vector product operator assumption in iterative methods for linear systems. Although here it is critical because there must be at least $n^2$ non-zeros in any second-order stochastic tensor $\cmP$. If we could afford that storage then the higher-order techniques from the previous section would apply and we would be under the scalability limit. We discuss how to create such fast operators from sparse datasets in Section~\ref{sec:fast-operators}.

The Li and Ng approximation to the stationary distribution of a second-order Markov chain replaces the stationary distribution with a symmetric rank-1 factorization: $\mX = \vx \vx^\trans$ where $\sum_i x_i = 1$. For a second-order PageRank chain, this transformation yields an implicit expression for $\vx$: 
\begin{equation} \label{eq:tensorpr1}
\vx = \alpha \cmP \vx^2 + (1-\alpha) \vv. 
\end{equation} 
We prefer to write this equation in terms of the Kronecker product structure of the tensor flattening along the first index. Let $\mR := \cmP_{(1)} = \flat_1(\cmP)$ be the $n$-by-$n^2$, stochastic flattening (see \citealt[Section 12.4.5]{Golub-2013-book} for more on \emph{flattenings} or \emph{unfoldings} of a tensor, and \citealt{Draisma-2014-bounded-rank} for the $\flat$ notation) along the first index: 
\[
\mR = \left[
\begin{array}{c c c | c c c | c | c c c}
\elm{P}_{111} & \cdots & \elm{P}_{1n1} & \elm{P}_{112} & \cdots & \elm{P}_{1n2}  & \cdots & \elm{P}_{11n} & \cdots & \elm{P}_{1nn}\\
\elm{P}_{211} & \cdots & \elm{P}_{2n1} & \elm{P}_{212} & \cdots & \elm{P}_{2n2} & \cdots & \elm{P}_{21n} & \cdots & \elm{P}_{2nn}\\
\vdots & \ddots & \vdots &  \vdots & \ddots & \vdots & \vdots & \vdots & \ddots & \vdots \\
\elm{P}_{n11} & \cdots & \elm{P}_{nn1}  & \elm{P}_{n12} & \cdots & \elm{P}_{nn2} & \cdots & \elm{P}_{n1n} & \cdots & \elm{P}_{nnn}\\
\end{array} \right].
\]
Then equation~\ref{eq:tensorpr1} is: 
\[ \vx = \alpha \mR (\vx \kron \vx) + (1-\alpha) \vv. \]
Consider the tensor $\cmP$ from Example~\ref{ex:simple-3}. The multilinear PageRank vector for this case with $\alpha = 0.85$ and $\vv = (1/3) \ve$ is: 
\[ \vx = \bmat{     0.1934 \\  0.0761 \\  0.7305 } \]
We generalize this second-order case to the order-$m$ case in the following definition of the multilinear PageRank problem. 

\begin{definition}[Multilinear PageRank]
 Let $\cmP$ be an order-$m$ tensor representing an $(m-1)$\textsuperscript{th} order Markov chain, $\alpha$ be a probability less than $1$, and $\vv$ be a stochastic vector. Then the multilinear PageRank vector is a nonnegative, stochastic solution of the following system of polynomial equations: 
 \begin{equation} \label{eq:tensor-pr}
  \vx = \alpha \cmP \vx^{(m-1)} + (1-\alpha) \vv 
\quad \text{ or equivalently } \quad
   \vx = \alpha \mR (\fullkron{\vx}{m-1}) + (1-\alpha) \vv
 \end{equation}
 where $\mR := \cmP_{(1)} = \flat_1(\cmP)$ is an $n$-by-$n^{(m-1)}$ stochastic matrix of the flattened tensor along the first index.
\end{definition}

We chose the name \emph{multilinear PageRank} instead of the alternative \emph{tensor PageRank} to emphasize the multilinear structure in the system of polynomial equations rather than the tensor structure of $\cmP$. Also, because the tensor structure of $\cmP$ is shared with the higher-order PageRank vector, which could have then also been called a \emph{tensor PageRank}. 

A multilinear PageRank vector $\vx$ always exists because it is a special case of the stationary distribution vector considered by Li and Ng. In order to apply their theory, we consider the equivalent problem: 
\begin{equation} \label{eq:tpr-ng}
 \vx = (\alpha \mR + (1-\alpha) \vv \ve^\trans) (\vx \kron \cdots \kron \vx) = \cmPbar \vx^{m-1}, 
\end{equation}
where $\cmPbar$ is the \emph{stochastic} transition tensor whose flattening along the first index is the matrix $\alpha \mR + (1-\alpha) \vv \ve^\trans$. The existence of a stochastic solution vector $\vx$ is guaranteed by Brouwer's fixed point theorem, and more immediately, by the stationary distributions considered by Li and Ng. The existing bulk of Perron-Frobenius theory for nonnegative tensors~\cite{Lim-2005-eigenvalues,Chang-2008-perron,Friedland-2013-perron}, unfortunately, is not helpful with existence of uniqueness issues as it applies to problems where $\normof[2]{\vx} = 1$ instead of the 1-norm.

Although the multilinear PageRank vector always exists, it may not be unique as shown in the following example:
\begin{example}
Let $\alpha = 0.99$, $\vv = [0, 1, 0]^\trans$, and 
\[ \mR = \bmat{
                0 &        0     &    0     &    0      &    0     &   0    & 1/3   & 1  &  0 \\
                0 &        0     &    0     &    0      &    1     &    0   & 1/3   & 0  &  1 \\
                1 &         1    &     1    &      1     &     0   &      1 &   1/3 &   0 &   0 }. \]
Then both $\vx = [0, 1, 0]^\trans$ and $\vx = [0.1890, 0.3663, 0.4447]^\trans$ solve the multilinear PageRank problem.
\end{example}

\subsection{A stochastic process: the spacey random surfer} \label{sec:process}
The PageRank vector is equivalently the stationary distribution of the random surfer stochastic process. The multilinear PageRank equation is the stationary distribution of a stochastic process with a history dependent behavior that we call the \emph{spacey random surfer}. For simplicity, we describe this for the case of a second-order problem. Let $S_t$ represent the stochastic process for the spacey random surfer. The process depends on the probability table for a second-order Markov chain $\cmP$. Our motivation is that the spacey surfer would like to transition as the higher-order PageRank Markov chain, $\Pr (S_{t+1} = i \mid S_t = j, S_{t-1} = k ) = \alpha \cP_{ijk} + (1-\alpha) v_i$,  however, on arriving at $S_t = j$, the surfer \emph{spaces out} and \emph{forgets} that $S_{t-1} = k$. Instead of using the true history state, the spacey random surfer decides to \emph{guess} that they came from a state they've visited frequently. Let $Y_t$ be a random state that the surfer visited in the past, chosen according to the frequency of visits to that state. (Hence, $Y_t = k$ is more likely if the surfer visited state $k$ frequently in the past.) The spacey random surfer then transitions as: 
\[ \Pr(S_{t+1} = i \mid S_t = j, Y_{t} = k ) = \alpha \cP_{ijk} + (1-\alpha) v_i. \]

Let us now state the resulting process slightly more formally. Let $\mathcal{F}_t$ be the natural filtration on the history of the process $S_1, \dots, S_t$. Then
\[ \Pr (Y_t = k \mid \mathcal{F}_t ) = \frac{1}{t+n} \left(1 + \sum_{r=1}^{t} \Indof{ S_r = k } \right), \]
where $\Indof{\cdot}$ is the indicator event. In this definition, note that we assume that there is an initial probability of $1/n$ of $Y_t$ taking any state. For instance, if $n=10$ and $S_1 = 5, S_2 = 6, S_3 = 5$ and $t = 3$, then $Y_t$ is a random variable that takes value $6$ with probability $2/13$ and value $5$ probability $3/13$. The stochastic process progresses as: 
\[ \Pr (S_{t+1} = i \mid \mathcal{F}_t ) = \alpha \sum_{k=1}^n \cP(i,S_{t},k) \frac{1 + \sum_{r=1}^{t} \Indof{ S_r = k }}{t+n} + (1-\alpha) v_i. \]
This stochastic process is a new type of vertex reinforced random walk~\cite{Pemantle-1992-vertex-reinforced}. 

We present the following heuristic justification for the equivalence of this process with the multilinear PageRank vector. In our subsequent manuscript~\citet{Gleich-preprint-srs}, we use results from \citet{Benaim-1997-vrrw} to make this equivalence precise and also, to study the process in more depth. Suppose the process has run for a long time $t \gg 1$. Let $\vy$ be the probability distribution of selecting any state as $Y_t$. The vector $\vy$ changes slowly when $t$ is large. For some time in the future, we can approximate the transitions as a first-order Markov chain: 
\[ \Pr(S_{t+1} = i \mid S_t = j) \approx \alpha \cP_{i,j,k} y_k + (1-\alpha) v_i. \] 
Let $\mR_k = \cP(:,:,k)$ be a slice of the probability table, then the Markov transition matrix is: 
\[ \alpha \sum_{k=1}^n \mR_k y_k + (1-\alpha) \vv = \alpha \mR (\vy \kron \mI) + (1-\alpha) \vv \ve^\trans. \]
The resulting stationary distribution is a vector $\vx$ where: 
\[ \vx = \alpha \mR( \vy \kron \vx) + (1-\alpha) \vv . \]
If $\vy = \vx$, then the distribution of $\vy$ will not change in the future, whereas if $\vy \not= \vx$, then the distribution of $\vy$ \emph{must change} in the future. Hence, we must have $\vx = \vy$ at stationarity and any stationary distribution of the spacey random surfer must be a solution of the multilinear PageRank problem.

\subsection{Sufficient conditions for uniqueness}
In this section, we provide a sufficient condition for the multilinear PageRank vector to be unique. Our original conjecture was that this vector would be unique when $\alpha < 1$, which mirrors the case of the standard and higher-order PageRank vectors; however, we have already seen an example where this was false. Throughout this section, we shall derive and prove the following result:
\begin{theorem} \label{thm:tpr-unique}
 Let $\cmP$ be an order-$m$ stochastic tensor, $\vv$ be a nonnegative vector. Then the multilinear PageRank equation 
 \[ \vx = \alpha \cmP \vx^{(m-1)} + (1-\alpha) \vv \]
 has a unique solution when $\alpha < \frac{1}{m-1}$.
\end{theorem}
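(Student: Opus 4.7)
The plan is to phrase the multilinear PageRank equation as a fixed-point problem on the probability simplex and show that, when $\alpha < 1/(m-1)$, the associated map is a strict contraction in the $\ell^1$ norm. Uniqueness (and in fact existence plus geometric convergence of the simple iteration) then follows immediately from the Banach fixed-point theorem. Concretely, I would let $\Delta_n = \{\vx \in \RR^n : \vx \ge 0,\ \ve^\trans \vx = 1\}$ and define
\[
F(\vx) \;=\; \alpha \mR (\fullkron{\vx}{m-1}) + (1-\alpha)\vv.
\]
The first step is to verify $F(\Delta_n) \subseteq \Delta_n$: nonnegativity is immediate because $\mR$, $\vv$, and $\vx$ are nonnegative, and the column-stochasticity of $\mR$ together with $\ve^\trans \vx = 1$ gives $\ve^\trans F(\vx) = \alpha \cdot 1 + (1-\alpha) \cdot 1 = 1$.

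The heart of the argument is a telescoping identity for the difference of Kronecker powers. For any $\vx, \vy \in \Delta_n$,
\[
\fullkron{\vx}{m-1} - \fullkron{\vy}{m-1}
\;=\; \sum_{i=1}^{m-1}
\underbrace{\vx \kron \cdots \kron \vx}_{i-1\text{ terms}}
\kron (\vx - \vy) \kron
\underbrace{\vy \kron \cdots \kron \vy}_{m-1-i\text{ terms}}.
\]
Using the elementary identity $\|\va \kron \vb\|_1 = \|\va\|_1 \|\vb\|_1$ (valid for arbitrary real vectors, since the Kronecker product enumerates all products $a_i b_j$), each summand has $\ell^1$ norm exactly $1 \cdot \|\vx - \vy\|_1 \cdot 1 = \|\vx - \vy\|_1$. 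Combined with the fact that $\mR$ column-stochastic implies $\|\mR \vz\|_1 \le \|\vz\|_1$ for every signed $\vz$, the triangle inequality gives
\[
\|F(\vx) - F(\vy)\|_1
\;=\; \alpha \,\bigl\| \mR \bigl(\fullkron{\vx}{m-1} - \fullkron{\vy}{m-1}\bigr) \bigr\|_1
\;\le\; \alpha(m-1)\, \|\vx - \vy\|_1.
\]
When $\alpha < 1/(m-1)$, the constant $\alpha(m-1)$ is strictly less than $1$, so $F$ is a contraction on the complete metric space $(\Delta_n, \|\cdot\|_1)$, and Banach's theorem yields a unique fixed point.

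The only genuine subtlety is the $\ell^1$ bound $\|\mR \vz\|_1 \le \|\vz\|_1$: this uses column-stochasticity of $\mR$ via $\sum_i |(\mR \vz)_i| \le \sum_i \sum_j R_{ij}|z_j| = \sum_j |z_j|$, and is the only place where signed vectors enter. I do not expect a serious obstacle beyond bookkeeping, since the telescoping identity, the Kronecker $\ell^1$ factorization, and the column-stochastic bound are each standard; the calculation above just assembles them. One could try to sharpen the constant by exploiting that $\vx - \vy$ and $\fullkron{\vx}{m-1} - \fullkron{\vy}{m-1}$ both sum to zero, but the crude bound already matches the hypothesis $\alpha(m-1) < 1$ stated in the theorem, and worrying about sharpness is unnecessary.
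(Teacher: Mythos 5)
Your proposal is correct: the telescoping identity is valid, each summand does have $1$-norm exactly $\normof[1]{\vx-\vy}$ because $\normof[1]{\va \kron \vb} = \normof[1]{\va}\,\normof[1]{\vb}$ and $\vx,\vy$ lie on the simplex, and the column-stochastic bound $\normof[1]{\mR\vz} \le \normof[1]{\vz}$ is right. This yields the same Lipschitz constant $\alpha(m-1)$ that the paper obtains, so the overall strategy coincides; the difference is in how the key estimate $\normof[1]{\vx^{\kron(m-1)} - \vy^{\kron(m-1)}} \le (m-1)\normof[1]{\vx-\vy}$ is derived and how the conclusion is packaged. The paper proves a two-factor lemma via the symmetrized splitting $\va\kron\vb - \vc\kron\vd = \tfrac12(\va-\vc)\kron(\vb+\vd) + \tfrac12(\va+\vc)\kron(\vb-\vd)$ and then inducts on the number of factors, whereas you telescope directly; your route is arguably the more standard bookkeeping and avoids the induction. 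At the final step the paper argues by contradiction from two assumed stochastic solutions, establishing uniqueness only (existence is obtained separately via Brouwer/Li--Ng), while your Banach fixed-point packaging additionally delivers existence and geometric convergence of the simple iteration --- facts the paper proves later as a separate theorem about the fixed-point method, using the very same contraction estimate. One small point worth making explicit: the theorem as stated only asserts uniqueness among nonnegative stochastic solutions, so restricting $F$ to $\Delta_n$ (a closed, hence complete, subset of $(\RR^n, \normof[1]{\cdot})$) is exactly the right setting and your verification that $F(\Delta_n)\subseteq\Delta_n$ covers it.
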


To prove this statement, we first prove a useful lemma about the norm of the difference of the Kronecker products between two stochastic vectors with respect to the difference of each part. We suspect this result is known, but were unable to find an existing reference.
\begin{lemma}
\label{kronecker_product_norm}
\label{lem:stochastic-diff-2}
Let $\va, \vb, \vc,$ and $\vd$ be stochastic vectors where $\va$ and $\vc$ have the same size. The 1-norm of the difference of their Kronecker products satisfies the following inequality,
\[ \normof[1]{ \va \kron \vb - \vc \kron \vd } \le \normof[1]{\va - \vc} + \normof[1]{\vb - \vd}. \]
\end{lemma}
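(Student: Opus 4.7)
The plan is the classical add-and-subtract-a-common-term trick, exploiting the multiplicativity of the $1$-norm under Kronecker products and the fact that stochastic vectors have unit $1$-norm.

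First I would insert the intermediate term $\va \kron \vd$ and use the triangle inequality:
\[
\normof[1]{\va \kron \vb - \vc \kron \vd}
  \le \normof[1]{\va \kron \vb - \va \kron \vd}
    + \normof[1]{\va \kron \vd - \vc \kron \vd}
  = \normof[1]{\va \kron (\vb - \vd)} + \normof[1]{(\va - \vc) \kron \vd}.
\]
Either of the two orderings ($\vc \kron \vb$ or $\va \kron \vd$ as the intermediate) works; I would pick $\va \kron \vd$ because then the factors that remain outside the difference are exactly the stochastic ones we know have $1$-norm equal to $1$.

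Next I would invoke the identity $\normof[1]{\vx \kron \vy} = \normof[1]{\vx}\, \normof[1]{\vy}$, which follows directly from writing the Kronecker product entrywise and using $\sum_{ij} |x_i y_j| = \bigl(\sum_i |x_i|\bigr)\bigl(\sum_j |y_j|\bigr)$. Applying this to each of the two terms and using $\normof[1]{\va} = \normof[1]{\vd} = 1$ (since $\va$ and $\vd$ are stochastic) gives
\[
\normof[1]{\va \kron (\vb - \vd)} = \normof[1]{\vb - \vd}, \qquad
\normof[1]{(\va - \vc) \kron \vd} = \normof[1]{\va - \vc},
\]
and summing yields the claimed bound.

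There is essentially no obstacle here; the only thing to be mildly careful about is that nonnegativity of $\va$ and $\vd$ (not merely that their $1$-norms equal $1$) is what makes the factorization $\normof[1]{\vx \kron \vy} = \normof[1]{\vx}\,\normof[1]{\vy}$ tight in the way we use it, but since the hypothesis says stochastic, this is automatic. The proof is three lines and I would present it exactly in that form.
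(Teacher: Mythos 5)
Your proof is correct, but it takes a different route from the paper. You use the asymmetric add-and-subtract decomposition through the intermediate term $\va \kron \vd$ together with the exact multiplicativity $\normof[1]{\vx \kron \vy} = \normof[1]{\vx}\,\normof[1]{\vy}$; the paper instead writes the difference as the exact symmetric identity $\va \kron \vb - \vc \kron \vd = \tfrac{1}{2}(\va - \vc) \kron (\vb + \vd) + \tfrac{1}{2}(\va + \vc) \kron (\vb - \vd)$ and bounds each half by computing $\sum_{ij}(b_j + d_j)\lvert a_i - c_i\rvert = 2\normof[1]{\va - \vc}$ directly from nonnegativity and unit column sums. The two arguments deliver the identical bound; yours is the more standard telescoping argument and in fact needs only that $\normof[1]{\va} = \normof[1]{\vd} = 1$, not nonnegativity --- your closing caveat about nonnegativity being required for the factorization to be ``tight'' is not quite right, since $\sum_{ij}\lvert x_i y_j\rvert = \bigl(\sum_i \lvert x_i\rvert\bigr)\bigl(\sum_j \lvert y_j\rvert\bigr)$ is an identity for arbitrary vectors. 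What the paper's symmetric decomposition buys is a slightly more transparent view of when the inequality is nearly attained, which the authors exploit immediately afterward in their two-dimensional tightness example; what yours buys is a cleaner extension to $m$-fold Kronecker products by direct telescoping, whereas the paper reaches that generalization (Lemma~\ref{lem:stochastic-diff}) by induction on the two-factor case.
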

\begin{proof}
This proof is purely algebraic and begins by observing:
\[ \va \kron \vb - \vc \kron \vd 
	= \frac{1}{2} (\va - \vc) \kron (\vb + \vd) + \frac{1}{2} (\va + \vc) \kron (\vb - \vd). \]
If we separate the bound into pieces we must bound terms such as $\normof[1]{ (\va - \vc) \kron (\vb + \vd) }$. But by using the stochastic property of the vectors, this term equals $\sum_{ij} (b_j + d_j) |a_i - c_i| = 2\normof[1]{\va - \vc}.$
\end{proof}

This result is essentially tight. Let us consider two stochastic vectors of 2 dimensions, $\vx=[x_1, 1-x_1]^\trans$ and $\vy=[y_1, 1-y_1]^\trans$, where $x_1 \neq y_1$.
Then, 
\[
\frac{\normof[1]{\vx \kron \vx - \vy \otimes \vy}}{\normof[1]{\vx - \vy_1}} = \frac{1}{2}|x_1 + y_1| + |1-(x_1+y_1)| + \frac{1}{2}|2 - (x_1+y_1)|.
\]
The ratio of $\normof[1]{\vx \kron \vx - \vy \kron \vy} / \normof[1]{\vx - \vy}$ approaches to 2 as $x_1+y_1 \rightarrow 0$ or $x_1 + y_1 \rightarrow 2$. However, this bound cannot be achieved.

The conclusion of Lemma \ref{kronecker_product_norm} can be easily extended to the case where there are 
multiple Kronecker products between vectors.
\begin{lemma}
\label{lem:stochastic-diff}
For stochastic vectors $\vx_1, \dots, \vx_m$ and $\vy_1, \dots, \vy_m$ where the size of $\vx_i$ is the same as the size of $\vy_i$, then  
\[ \normof[1]{  \vx_1 \kron \cdots \kron \vx_m - \vy_1 \kron \cdots \kron \vy_m } \le \sum_{i} \normof[1]{\vx_i - \vy_i}. \]
\end{lemma}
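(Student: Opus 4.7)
The plan is to prove Lemma~\ref{lem:stochastic-diff} by straightforward induction on $m$, using Lemma~\ref{lem:stochastic-diff-2} as the base case. The base case $m = 2$ is exactly Lemma~\ref{lem:stochastic-diff-2}, so nothing new is needed there.

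For the inductive step, I would assume the inequality holds for $m-1$ stochastic vectors and prove it for $m$. The key observation is that the Kronecker product of stochastic vectors is itself stochastic: if $\vu$ and $\vw$ are nonnegative with $\ve^\trans \vu = \ve^\trans \vw = 1$, then $\vu \kron \vw$ is nonnegative and $\ve^\trans (\vu \kron \vw) = (\ve^\trans \vu)(\ve^\trans \vw) = 1$. Consequently $\vx_2 \kron \cdots \kron \vx_m$ and $\vy_2 \kron \cdots \kron \vy_m$ are stochastic vectors of equal size, so I can apply Lemma~\ref{lem:stochastic-diff-2} with $\va = \vx_1$, $\vc = \vy_1$, $\vb = \vx_2 \kron \cdots \kron \vx_m$, and $\vd = \vy_2 \kron \cdots \kron \vy_m$ to obtain
\[
\normof[1]{\vx_1 \kron \cdots \kron \vx_m - \vy_1 \kron \cdots \kron \vy_m}
\le \normof[1]{\vx_1 - \vy_1} + \normof[1]{\vx_2 \kron \cdots \kron \vx_m - \vy_2 \kron \cdots \kron \vy_m}.
\]
The inductive hypothesis applied to the final term yields $\sum_{i=2}^{m} \normof[1]{\vx_i - \vy_i}$, and adding the $i = 1$ term completes the bound.

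There is no real obstacle; the only point that requires a sentence of justification is the closure of the stochastic-vector property under Kronecker products, so that Lemma~\ref{lem:stochastic-diff-2} can be invoked at each stage of the induction. An equally clean alternative would be the telescoping identity
\[
\vx_1 \kron \cdots \kron \vx_m - \vy_1 \kron \cdots \kron \vy_m
= \sum_{i=1}^{m} \vy_1 \kron \cdots \kron \vy_{i-1} \kron (\vx_i - \vy_i) \kron \vx_{i+1} \kron \cdots \kron \vx_m,
\]
followed by the triangle inequality and the multiplicativity $\normof[1]{\vu \kron \vw} = \normof[1]{\vu}\,\normof[1]{\vw}$, using that each $\vy_j$ and $\vx_j$ has unit $1$-norm. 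Either route gives the stated bound with no further work.
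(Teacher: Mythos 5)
Your proposal is correct and follows essentially the same route as the paper: the paper also proves the result by inductively applying Lemma~\ref{lem:stochastic-diff-2}, grouping a Kronecker product of stochastic vectors as a single stochastic vector (it peels factors off one end just as you do, merely grouping $\vx_1\kron\vx_2$ first rather than $\vx_2\kron\cdots\kron\vx_m$). Your explicit remark about closure of stochasticity under Kronecker products, and the telescoping alternative, are fine additions but do not change the substance.
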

\begin{proof}
Let us consider the case of $m=3$. Let $\va = \vx_1 \kron \vx_2$, $\vc = \vy_1 \kron \vy_2$, $\vb = \vx_3$ and $\vd = \vy_3$. Then 
\[ \normof[1]{ \vx_1 \kron \vx_2 \kron \vx_3 - \vy_1 \kron \vy_2 \kron \vy_3} = \normof[1]{ \va \kron \vb - \vc \kron \vd} \le \normof[1]{\va - \vc} + \normof[1]{\vx_3 - \vy_3} \]
by using Lemma~\ref{lem:stochastic-diff-2}. But by recurring on $\va - \vc$, we complete the proof for $m=3$. It is straightforward to apply this argument inductively for $m > 3$.
\end{proof} 

This result makes it easy to show uniqueness of the multilinear PageRank vectors:

\begin{lemma}
The multilinear PageRank equation has the unique solution when $\alpha < 1/2$ for third order tensors.
\end{lemma}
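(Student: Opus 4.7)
The plan is to show uniqueness by a direct contraction argument applied to the difference of two putative solutions, using the just-established Kronecker-product Lipschitz bound (Lemma \ref{lem:stochastic-diff-2}) together with the fact that the flattened tensor $\mR$ is column stochastic (hence has induced $1$-norm equal to $1$).

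Concretely, I would suppose $\vx$ and $\vy$ are two stochastic solutions of $\vx = \alpha \mR(\vx \kron \vx) + (1-\alpha)\vv$. Subtracting the two equations kills the $(1-\alpha)\vv$ term and yields
\[
\vx - \vy \;=\; \alpha \mR \bigl( \vx \kron \vx - \vy \kron \vy \bigr).
\]
Taking $1$-norms, using $\|\mR\|_1 = 1$ (column stochasticity), and invoking Lemma~\ref{lem:stochastic-diff-2} with $\va = \vx$, $\vb = \vx$, $\vc = \vy$, $\vd = \vy$ gives
\[
\normof[1]{\vx - \vy} \;\le\; \alpha \normof[1]{\vx \kron \vx - \vy \kron \vy} \;\le\; 2\alpha \normof[1]{\vx - \vy}.
\]
Therefore $(1 - 2\alpha)\normof[1]{\vx - \vy} \le 0$, and since $\alpha < 1/2$ the coefficient is strictly positive, forcing $\vx = \vy$. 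Existence of at least one stochastic solution is already guaranteed (via Brouwer, or equivalently via Li and Ng's existence theorem applied to $\cmPbar$ in equation~\eqref{eq:tpr-ng}), so together we get uniqueness.

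There is no real obstacle here; every ingredient is in place. The only thing worth being careful about is that Lemma~\ref{lem:stochastic-diff-2} requires \emph{both} sides of each Kronecker factor to be stochastic vectors, which is exactly why we insist that the candidate solutions $\vx,\vy$ be stochastic (an assumption that is built into the definition of a multilinear PageRank vector). The threshold $1/2$ arises from the factor $2 = m-1$ with $m = 3$ in the Kronecker bound, foreshadowing the general statement of Theorem~\ref{thm:tpr-unique} which will plug in Lemma~\ref{lem:stochastic-diff} to obtain a factor of $m-1$ and hence the condition $\alpha < 1/(m-1)$.
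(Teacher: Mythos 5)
Your argument is correct and is essentially identical to the paper's own proof: both subtract the two fixed-point equations, apply Lemma~\ref{lem:stochastic-diff-2} together with the column stochasticity of $\mR$ to get the factor $2\alpha$, and conclude $\vx = \vy$ when $\alpha < 1/2$. No gaps.
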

\begin{proof}
Assume there are two distinct solutions to the multilinear PageRank equation,
\[
\begin{aligned}
\vx &= \alpha \mR(\vx \otimes \vx) + (1-\alpha)\vv\\
\vy &= \alpha \mR(\vy \otimes \vy) + (1-\alpha)\vv \\
\vx - \vy &= \alpha \mR (\vx \otimes \vx - \vy \otimes \vy).
\end{aligned}
\]
We simply apply Lemma~\ref{lem:stochastic-diff-2}:
\[
\normof[1]{\vx - \vy} = \normof[1]{\alpha \mR (\vx \otimes \vx - \vy \otimes \vy)} \le 2\alpha\normof[1]{\mR} \normof[1]{\vx-\vy} < \normof[1]{\vx-\vy},
\]
which is a contradiction (recall that $\mR$ is stochastic). Thus, the multilinear PageRank equation has the unique solution when $\alpha < 1/2$.
\end{proof} \bigskip

The proof of the general result in Theorem~\ref{thm:tpr-unique} is identical, except that it uses the general bound  Lemma~\ref{lem:stochastic-diff} on the order-$m$ problem.

\subsection{Uniqueness via Li and Ng's results}

Li and Ng's recent paper~\cite{Li-2013-tensor-markov-chain} tackled the same uniqueness question for the general problem: \[ \cmP \vx^{m-1} = \vx. \]  We can also write our problem in this form as in equation~\ref{eq:tpr-ng} and apply their theory. In the case of a third-order problem, or $m=3$, they define a quantity to determine uniqueness:
\[ \bigbeta. \] 
For any tensor where $\beta > 1$, the vector $\vx$ that solves \[ \cmP \vx^2 = \vx \] is unique. In Appendix~\ref{sec:pagerank-beta} to this paper, we show that $\beta > 1$ is a stronger condition that $\alpha < 1/2$. We defer this derivation to the appendix as it is slightly tedious and does not result in any new insight into the problem.

\subsection{PageRank and higher-order PageRank}

We conclude this section by establishing some relationships between multilinear PageRank, higher-order PageRank, and PageRank for a special tensor. In the case when there is no higher-order structure present, then the multilinear PageRank, higher-order PageRank, and PageRank are all equivalent. The precise condition is where $\mR = \ve^\trans \kron \mQ$ for a stochastic matrix $\mQ$, which models a higher-order random surfer with behavior that is \emph{independent} of the last state. Thus, we'd expect that none of our higher-order modifications would change the properties of the stationary distribution. 

\begin{proposition}
Consider a second-order multilinear PageRank problem with a third-order stochastic tensor where the flattened matrix $\mR = \ve^\trans \kron \mQ$ has dimension $n \times n^2$ and where $\mQ$ is an $n \times n$ column stochastic matrix. Then for all $0 < \alpha < 1$ and stochastic vectors $\vv$, the multilinear PageRank vector is the same as the PageRank vector of $(\alpha, \mQ, \vv)$. Also, the marginal distribution of the higher-order PageRank solution matrix, $\mX \ve$, is the same as well.
\end{proposition}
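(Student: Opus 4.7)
The plan is to treat the two claims separately but in parallel, since both rest on the same structural observation: when $\mR = \ve^\trans \kron \mQ$, the transition probabilities of $\cmP$ are independent of the last-history index, i.e.\ $\cP_{ijk} = Q_{ij}$ for all $k$. Once this is written down, both statements reduce to PageRank by invoking the uniqueness of the PageRank vector from Definition~\ref{def:pr}.

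For the multilinear PageRank claim, I would start from $\vx = \alpha \mR(\vx \kron \vx) + (1-\alpha)\vv$ and compute the product $(\ve^\trans \kron \mQ)(\vx \kron \vx)$ using the mixed-product identity $(\mA \kron \mB)(\mC \kron \mD) = (\mA \mC)\kron (\mB \mD)$. This gives $(\ve^\trans \vx) \kron (\mQ \vx) = \mQ \vx$, where the collapse uses the stochasticity constraint $\ve^\trans \vx = 1$. Substituting this back yields exactly $\vx = \alpha \mQ \vx + (1-\alpha)\vv$, which by Definition~\ref{def:pr} is the PageRank equation for $(\alpha, \mQ, \vv)$; by uniqueness of the PageRank vector, the two must coincide.

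For the higher-order PageRank marginal, I would plug $\cP_{ijk} = Q_{ij}$ into the defining equation
\[ X_{ij} = \alpha \sum_k \cP_{ijk} X_{jk} + (1-\alpha) v_i \sum_k X_{jk}, \]
factor $Q_{ij}$ out of the first sum, and write the result as $X_{ij} = \bigl(\alpha Q_{ij} + (1-\alpha) v_i\bigr) z_j$, where $z_j := \sum_k X_{jk} = (\mX \ve)_j$. Then I would sum over $j$ and use $\sum_j z_j = 1$ (total probability of the stationary distribution $\mX$) to obtain $(\mX \ve)_i = \alpha (\mQ \vz)_i + (1-\alpha) v_i$, i.e., $\vz = \alpha \mQ \vz + (1-\alpha)\vv$. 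Since $\vz = \mX \ve$, another appeal to uniqueness of the PageRank vector identifies $\mX \ve$ with the PageRank vector of $(\alpha, \mQ, \vv)$.

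Neither step has a genuinely hard obstacle; both proofs are one-line Kronecker manipulations followed by a uniqueness appeal. The only care needed is bookkeeping: making sure the flattening convention $\mR_{i,(k-1)n+j} = \cP_{ijk}$ from Section~\ref{sec:tensor-pr} is consistent with the block structure $\mR = [\mQ \mid \mQ \mid \cdots \mid \mQ]$ that $\ve^\trans \kron \mQ$ produces, and using $\ve^\trans \vx = 1$ and $\ve^\trans \tvec(\mX) = 1$ at the right places. These are the only places where the normalization of the stochastic vectors/tensors enters, and it is precisely what makes the $\alpha$-independence of the conclusion work out.
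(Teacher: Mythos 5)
Your proposal is correct and follows essentially the same route as the paper: the first claim is the identical one-line reduction $\alpha(\ve^\trans\kron\mQ)(\vx\kron\vx)=\alpha\mQ\vx$ followed by uniqueness of the PageRank vector, and your index-wise marginalization of the higher-order stationary equation is the entrywise version of the paper's computation $\vy=(\ve^\trans\kron\mI)[\alpha\mP+(1-\alpha)\mV]\tvec(\mX)=\alpha\mQ\vy+(1-\alpha)\vv$. No gaps.
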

\begin{proof}
If $\mR = \ve^\trans \otimes \mQ$, then any solution of equation~\eqref{eq:tensor-pr} is also the unique solution of the standard PageRank equation:
\[
\vx = \alpha (\ve^\trans \kron \mQ)(\vx \kron \vx) + (1-\alpha)\vv = \alpha \mQ \vx + (1-\alpha)\vv.
\]
Thus, the two solutions must be the same and the multilinear PageRank problem has a unique solution as well. 
Now consider the solution of the second-order PageRank problem from equation~\eqref{eq:1-step chain}:
\[ \tvec(\mX) = [\alpha \mP + (1-\alpha) \mV ]\tvec(\mX). \]
Note that $\mR = (\ve^\trans \kron \mI) \mP$. Consider the marginal distribution: $\vy = \mX \ve = (\ve^\trans \kron \mI) \tvec(\mX)$. The vector $\vy$ must satisfy: 
\[ \vy = (\ve^\trans \kron \mI) \tvec(\mX) = (\ve^\trans \kron \mI)  [\alpha \mP + (1-\alpha) \mV ]\tvec(\mX) = \alpha \mR \tvec(\mX) + (1-\alpha) \vv.\]
But $\mR \tvec(\mX) = (\ve^\trans \kron \mQ) \tvec(\mX) = \mQ \vy$.
\end{proof}
 
\subsection{Fast operators from sparse data}
\label{sec:fast-operators}
The last detail we wish to mention is how to build a fast operator $\cmP \vx^{m-1}$ when the input tensor is highly sparse. Let $\cmQ$ be the tensor that models the original \emph{sparse} data, where $\cmQ$ has far fewer than $n^{m-1}$ non-zeros and cannot be stochastic. Nevertheless, suppose that $\cmQ$ has the following property: 
\[ \cQ(i, j, \dots, k) \ge 0 \quad \text{ and } \quad \sum_i \cQ(i, j, \dots, k) \le 1 \text{ for all } j, \dots, k. \]
This could easily be imposed on a set of nonnegative data in time and memory proportional to the non-zeros of $\cmQ$ if that were not originally true. To create a fast operator for a fully stochastic problem, we generalize the idea behind the \emph{dangling indicator} correction of PageRank. (The following derivation is entirely self contained, but the genesis of the idea is identical to the dangling correction in PageRank problems~\cite{boldi2007-traps}.) Let $\mS$ be the flattening of $\cmQ$ along the first index. Let $\vd^\trans = \ve^\trans - \ve^\trans \mS \ge 0$, and let $\vu$ be a stochastic vector that determines what the model should do on a \emph{dangling case}. Then: 
\[ \mR = \mS + \vu \vd^\trans \]
is a column stochastic matrix, which we interpret as the flattening of $\cmP$ along the first index. If $\vx$ is a stochastic vector, then we can evaluate: 
\[ \mR \vx = \underbrace{\mS \vx}_{\vz} + \vu (\ve^\trans \vx - \ve^\trans \mS \vx) = \vz + (1 - \ve^\trans \vz) \vu, \]
which only involves work proportional to the non-zeros of $\mS$ or the non-zeros of $\mQ$.
Thus, given any sparse tensor data, we can create a fully stochastic model.

\section{Algorithms for Multilinear PageRank} \label{sec:methods}

At this point, we begin our discussion of algorithms to compute the multilinear PageRank vector. In the following
section, we investigate five different methods to compute it. The methods are all inspired by the fixed-point
nature of the multilinear PageRank solution. They are: 
\begin{compactenum}
\item a fixed-point iteration, as in the power method and Richardson method;
\item a shifted fixed-point iteration, as in SS-HOPM~\cite{Kolda-2011-sshopm};
\item a non-linear inner-outer iteration, akin to \citet{gleich2010-inner-outer};
\item an inverse iteration, as in the inverse power method; and
\item a Newton iteration.
\end{compactenum}
We will show that the first four of them converge in the case that $\alpha < 1/(m-1)$ for an order-$m$  tensor. For Newton, we show it converges quadratically fast for a third-order tensor when $\alpha < 1/2$. We also illustrate a few empirical advantages of each method.

\paragraph{The test problems}
Throughout the following section, the following two problems help illustrate the methods:
\[ \begin{aligned}
\mR_1 & = \left[ \begin{array}{ccc|ccc|ccc}
1/3 & 1/3 & 1/3 & 1/3 & 0 & 0 & 0 & 0 & 0 \\
1/3 & 1/3 & 1/3 & 1/3 & 0 & 1/2 & 1 & 0 & 1 \\
1/3 & 1/3 & 1/3 & 1/3 & 1 & 1/2 & 0 & 1 & 0 \\
\end{array} \right] \\
\mR_2 & = \left[ \begin{array}{cccc|cccc|cccc|cccc}
0 & 0 & 0 & 0 & 0 & 0 & 0 & 0 & 0 & 0   & 0 & 0 & 0 & 0   & 0 & 1/2 \\
0 & 0 & 0 & 0 & 0 & 1 & 0 & 1 & 0 & 1/2 & 0 & 0 & 0 & 1/2 & 0 & 0 \\
0 & 0 & 0 & 0 & 0 & 0 & 1 & 0 & 0 & 1/2 & 1 & 1 & 0 & 0   & 0 & 0 \\
1 & 1 & 1 & 1 & 1 & 0 & 0 & 0 & 1 & 0   & 0 & 0 & 1 & 1/2 & 1 & 1/2 \\
\end{array} \right]
\end{aligned} \]
with $\vv = \ve/n$. The parameter $\alpha$ will vary through our experiments, but we are most interested in the regime where $\alpha > 1/2$ to understand how the algorithms behave outside of the region where we can prove they converge.
We derived these problems by using exhaustive and randomized searches over the
space of $2 \times 2 \times 2$, $3 \times 3 \times 3$, and $4 \times 4 \times 4$
binary-valued tensors, which we then normalized to be stochastic. Problems $\mR_1$ and $\mR_2$ were made from the database of problems we consider from the next section (Section~\ref{sec:experiments}).

The residual of a problem and a potential solution $\vx$ is the $1$-norm:
\begin{equation} \label{eq:residual}
 \text{residual} = \normof[1]{ \alpha \cmP \vx^{m-1} + (1-\alpha) \vv - \vx}. 
\end{equation}
We seek methods that cause the residual to drop below $10^{-8}$. For all the examples in this section, we ran the method out to $10,000$ iterations to ensure there was no delayed convergence, although, we only show $1000$ iterations.

\subsection{The fixed-point iteration}

The multilinear PageRank problem seeks a fixed-point of the following non-linear map: 
\[ f(\vx) = \alpha \cmP \vx^{m-1} + (1-\alpha) \vv. \]
We first show convergence of the iteration implied by this map in the case
that $\alpha < 1/(m-1)$.
\begin{theorem}
Let $\cmP$ be an order-$m$ stochastic tensor, let $\vv$ and $\vx\itn{0}$ be stochastic vectors, and let $\alpha < 1/(m-1)$. The fixed-point iteration 
\[ \vx\itn{k+1}^{} = \alpha \cmP \vx\itn{k}^{m-1} + (1-\alpha) \vv \]
will converge to the unique solution $\vx$ of the multilinear PageRank problem~\eqref{eq:tensor-pr} and also 
\[ \normof[1]{\vx\itn{k} - \vx} \le [\alpha(m-1)]^k \normof[1]{\vx\itn{0} - \vx} \le 2[\alpha(m-1)]^k. \]
\end{theorem}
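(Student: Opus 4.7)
The plan is to subtract the fixed-point equation $\vx = \alpha\cmP\vx^{m-1}+(1-\alpha)\vv$ satisfied by the unique solution (guaranteed by Theorem~\ref{thm:tpr-unique} since $\alpha < 1/(m-1) \le 1$ for $m \ge 2$) from the iteration, obtaining
\[ \vx\itn{k+1} - \vx \;=\; \alpha\bigl(\cmP\vx\itn{k}^{m-1} - \cmP\vx^{m-1}\bigr) \;=\; \alpha\mR\bigl(\vx\itn{k}^{\kron(m-1)} - \vx^{\kron(m-1)}\bigr), \]
where $\mR = \flat_1(\cmP)$ is column-stochastic. Taking the $1$-norm and using that $\|\mR\vw\|_1 \le \|\vw\|_1$ for any vector $\vw$ (since each column of $\mR$ has unit $1$-norm) will bound the right-hand side by $\alpha \|\vx\itn{k}^{\kron(m-1)} - \vx^{\kron(m-1)}\|_1$.

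Next I would invoke Lemma~\ref{lem:stochastic-diff}, applied to $m-1$ copies of $\vx\itn{k}$ against $m-1$ copies of $\vx$, to obtain
\[ \bigl\|\vx\itn{k}^{\kron(m-1)} - \vx^{\kron(m-1)}\bigr\|_1 \;\le\; (m-1)\|\vx\itn{k} - \vx\|_1. \]
For this step to be valid I need each of $\vx\itn{k}$ and $\vx$ to be stochastic. The solution $\vx$ is stochastic by construction, and the stochasticity of $\vx\itn{k}$ is established by a short induction: if $\vx\itn{k}$ is stochastic, then $\cmP\vx\itn{k}^{m-1}$ is a nonnegative vector that sums to $1$ (since $\mR$ is column stochastic and $\vx\itn{k}^{\kron(m-1)}$ is stochastic), hence $\alpha \cmP\vx\itn{k}^{m-1} + (1-\alpha)\vv$ is a convex combination of stochastic vectors and is therefore stochastic.

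Combining these two bounds yields the single-step contraction
\[ \|\vx\itn{k+1} - \vx\|_1 \;\le\; \alpha(m-1)\,\|\vx\itn{k} - \vx\|_1, \]
and iterating gives the claimed geometric bound $\|\vx\itn{k} - \vx\|_1 \le [\alpha(m-1)]^k \|\vx\itn{0} - \vx\|_1$. Since $\alpha(m-1) < 1$, the sequence converges to $\vx$, which also re-establishes uniqueness (any other fixed point would violate the contraction). The final factor of $2$ comes from the trivial observation that the $1$-norm distance between two stochastic vectors is at most $2$. There is no real obstacle here beyond the bookkeeping of verifying stochasticity so that Lemma~\ref{lem:stochastic-diff} applies; the heart of the argument is just stringing together the column-stochastic norm bound on $\mR$ with the Kronecker-difference lemma already proved.
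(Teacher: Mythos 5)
Your proposal is correct and follows essentially the same route as the paper: subtract the fixed-point equation, use the column-stochasticity of $\mR$ together with Lemma~\ref{lem:stochastic-diff} to get the contraction factor $\alpha(m-1)$, and invoke Theorem~\ref{thm:tpr-unique} for uniqueness. Your write-up is in fact slightly more careful than the paper's, since you spell out the induction showing the iterates remain stochastic, which the paper only asserts.
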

\begin{proof}
Note first that this problem has a unique solution $\vx$ by Theorem~\ref{thm:tpr-unique}, and also that $\vx\itn{k}$ remains stochastic for all iterations. 
This result is then, essentially, an implication of Lemma~\ref{lem:stochastic-diff}.
Let $\mR$ be the flattening of $\cmP$ along the first index. 
Then using that Lemma, 
\[ \normof[1]{\vy - \vx\itn{k+1}} = \normof[1]{\alpha \mR (\vy \kron \cdots \kron \vy - \vx\itn{k} \kron \cdots \kron \vx\itn{k})} \le \alpha (m-1) \normof[1]{\vy - \vx\itn{k}}. \]
Thus, we have established a contraction.
\end{proof}

Li and Ng treat the same iteration in their paper and they show a more general convergence result that implies our theorem, thus providing a more refined understanding of the convergence of this iteration. However, their result needs a difficult-to-check criteria. In earlier work by~\cite{Rabinovich-1992-quadratic}, they show that the fixed-point iteration will always converge when a certain symmetry property holds, however, they do not have a rate of convergence. Nevertheless, it is still easy to find PageRank problems that will not converge with this iteration. Figure~\ref{fig:fp-diverge} shows the result of using this method on $\mR_1$ with $\alpha = 0.95$ and $\alpha = 0.96$. The former converges nicely and the later does not.

\begin{figure}
\includegraphics[width=0.5\linewidth]{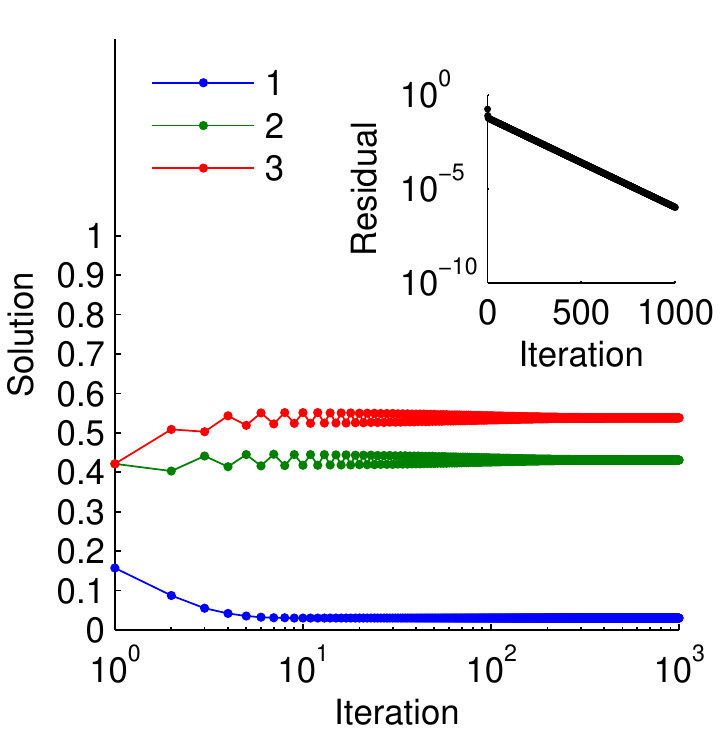}%
\includegraphics[width=0.5\linewidth]{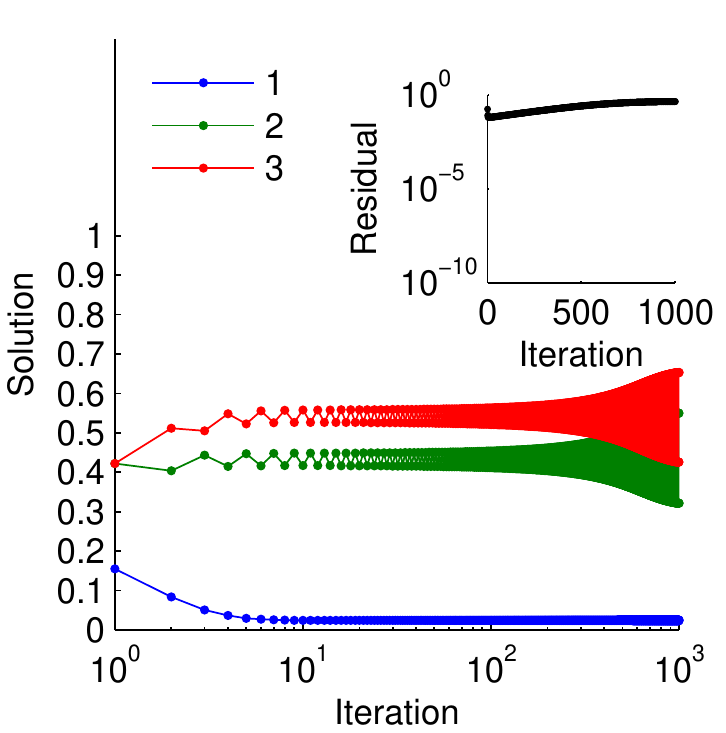}%

\caption{At left, the components of the iterates of the fixed point iteration for $\mR_1$ with $\alpha = 0.95$ show that it converges to a solution. (See the inset residual in the upper right.) At right, the result of the fixed point iteration with $\alpha=0.96$ illustrates a case that does not converge. }
\label{fig:fp-diverge}
\end{figure}

\subsection{The shifted fixed-point iteration}
\citet{Kolda-2011-sshopm} noticed a similar phenomenon for the convergence of the symmetric higher-order power method and proposed the \emph{shifted} symmetric higher-order power method (SS-HOPM) to address these types of oscillations. They were able to show that their iteration always converges monotonically for an appropriate shift value. For the multilinear PageRank problem, we study the iteration given by the equivalent fixed-point: 
\[ (1+\gamma) \vx = \left[ \alpha \cmP \vx^{m-1} + (1-\alpha) \vv \right] + \gamma \vx. \]
The resulting iteration is what we term the \emph{shifted fixed-point iteration}
\[
 \vx\itn{k+1}^{} = \frac{\alpha}{1+\gamma} \cmP \vx\itn{k}^{m-1} + \frac{1-\alpha}{1+\gamma} \vv + \frac{\gamma}{1+\gamma} \vx\itn{k}. 
 \]
It shares the property that an initial stochastic approximation $\vx\itn{0}$ will remain stochastic throughout. 

\begin{theorem}
Let $\cmP$ be an order-$m$ stochastic tensor, let $\vv$ and $\vx\itn{0}$ be stochastic vectors, and let $\alpha < 1/(m-1)$. The shifted fixed-point iteration 
\begin{equation} \label{eq:shifted}
 \vx\itn{k+1}^{} = \frac{\alpha}{1+\gamma} \cmP \vx\itn{k}^{m-1} + \frac{1-\alpha}{1+\gamma} \vv + \frac{\gamma}{1+\gamma} \vx\itn{k} 
\end{equation}
will converge to the unique solution $\vx$ of the multilinear PageRank problem~\eqref{eq:tensor-pr} and also 
\[ \normof[1]{\vx\itn{k} - \vx} \le \left( \frac{\alpha(m-1) + \gamma}{1+\gamma} \right)^k \normof[1]{\vx\itn{0} - \vx} \le 2\left( \frac{\alpha(m-1) + \gamma}{1+\gamma} \right)^k. \]
\end{theorem}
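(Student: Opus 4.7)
The plan is to mirror the proof of the unshifted case, since the shifted iteration is, by construction, a convex combination of the previous iterate and one step of the unshifted map. First, I will observe that Theorem~\ref{thm:tpr-unique} guarantees a unique solution $\vx$ in this parameter regime, so there is a well-defined target. Next, I will check that stochasticity is preserved by the shifted update: the three terms on the right-hand side of \eqref{eq:shifted} are each nonnegative, and the weights $\alpha/(1+\gamma)$, $(1-\alpha)/(1+\gamma)$, $\gamma/(1+\gamma)$ sum to $1$, so $\ve^\trans \vx\itn{k+1} = 1$ whenever $\vx\itn{k}$ is stochastic. This guarantees that the Kronecker-product bound of Lemma~\ref{lem:stochastic-diff} is applicable at every step.

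The main calculation is to derive a contraction estimate. Because $\vx$ is a fixed point of the original map, it is equally a fixed point of the shifted map, that is,
\[ \vx = \frac{\alpha}{1+\gamma} \cmP \vx^{m-1} + \frac{1-\alpha}{1+\gamma} \vv + \frac{\gamma}{1+\gamma} \vx. \]
Subtracting the iteration~\eqref{eq:shifted} from this fixed-point relation and letting $\mR$ be the first-index flattening of $\cmP$ gives
\[ \vx - \vx\itn{k+1} = \frac{\alpha}{1+\gamma}\, \mR\bigl( \fullkron{\vx}{m-1} - \fullkron{\vx\itn{k}}{m-1} \bigr) + \frac{\gamma}{1+\gamma} (\vx - \vx\itn{k}). \]
Taking $1$-norms, using the triangle inequality, that $\mR$ is column stochastic (so $\normof[1]{\mR} = 1$), and Lemma~\ref{lem:stochastic-diff} applied to the Kronecker-product difference yields
\[ \normof[1]{\vx - \vx\itn{k+1}} \le \frac{\alpha(m-1) + \gamma}{1+\gamma} \, \normof[1]{\vx - \vx\itn{k}}. \]
Iterating this bound $k$ times gives the claimed rate, and the final factor of $2$ follows because two stochastic vectors differ by at most $2$ in $1$-norm.

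The only thing to verify is that the contraction factor is indeed strictly less than one, but this is immediate from the hypothesis $\alpha < 1/(m-1)$: the numerator $\alpha(m-1) + \gamma$ is strictly less than $1 + \gamma$ for every $\gamma \ge 0$. I do not anticipate any real obstacle here, since the proof is a direct adaptation of the unshifted argument; the main thing to be careful about is applying Lemma~\ref{lem:stochastic-diff} only after the stochasticity of the iterates has been confirmed, and noting that the shift contributes the additive $\gamma/(1+\gamma)$ to the contraction constant but does not otherwise interfere with the telescoping.
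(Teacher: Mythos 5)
Your proof is correct and follows exactly the route the paper intends: the paper omits this proof entirely, stating only that it is ``in essence, identical to the previous case,'' and your argument is precisely that adaptation --- the same contraction via Lemma~\ref{lem:stochastic-diff}, with the shift contributing the extra $\gamma/(1+\gamma)$ term to the rate.
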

The proof of this convergence is, in essence, identical to the previous case and we omit it for brevity. 

This result also suggests that choosing $\gamma = 0$ is optimal and we should not shift the iteration at all. That is, we should run the fixed-point iteration. This analysis, however, is misleading as illustrated in Figure~\ref{fig:shifted}. There, we show the iterates from solving $\mR_1$ with $\alpha = 0.96$, which did not converge with the fixed-point iteration, but converges nicely with $\gamma = 1/2$. However, $\gamma < (m-2)/2$ will not guarantee convergence and the same figure shows that $\mR_2$ with $\alpha = 0.97$ will not converge. We now present a necessary analysis that shows this method may not converge if $\gamma < (m-2)/2$ when $\alpha > 1/(m-1)$.

\begin{figure}
\includegraphics[width=0.5\linewidth]{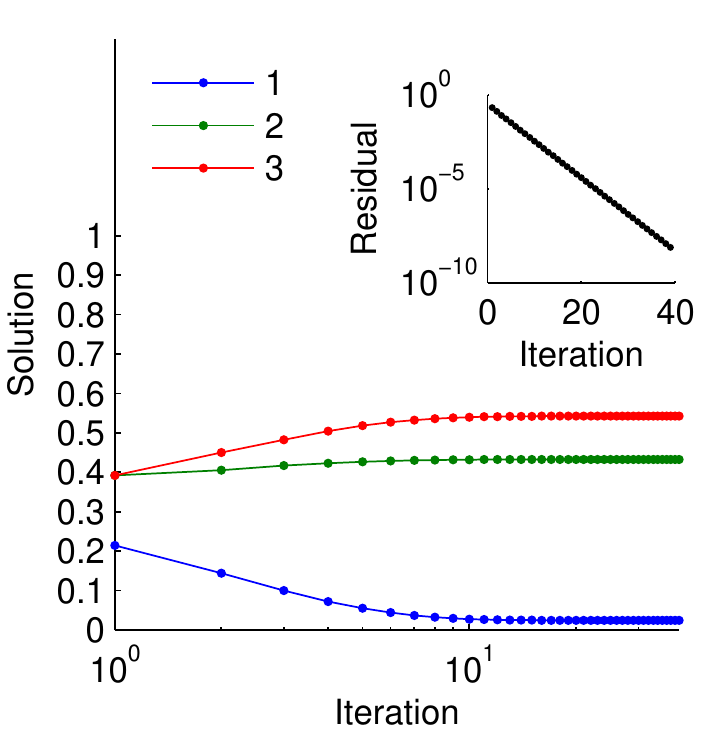}%
\includegraphics[width=0.5\linewidth]{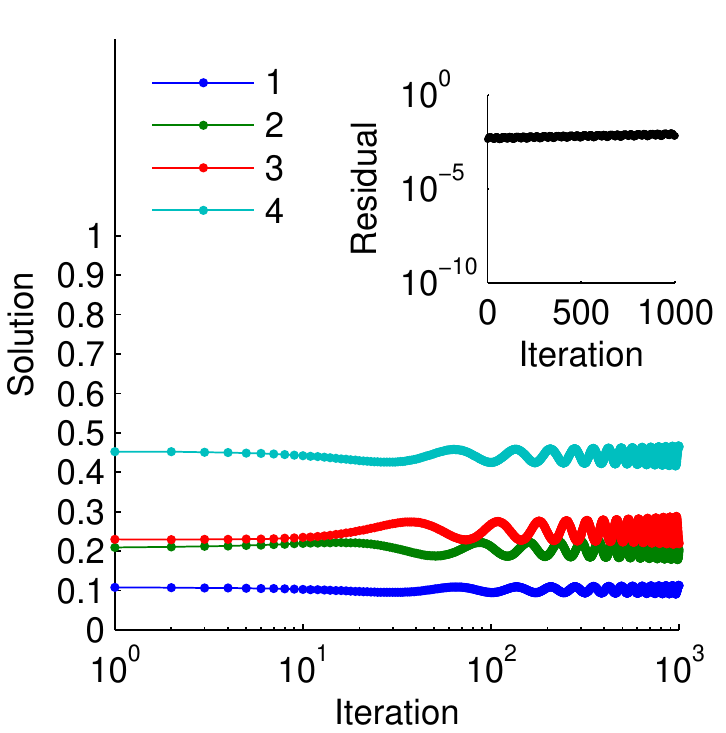}%

\caption{When we use a shift $\gamma = 1/2$, then at left, the iterates of the shifted iteration for $\mR_1$ with $\alpha = 0.96$ shows that it quickly converges to a solution, whereas this same problem did not converge with the fixed-point method.  At right, the result of the shifted iteration on $\mR_2$ with $\alpha=0.97$ again shows a case that does not converge. }
\label{fig:shifted}
\end{figure}

\paragraph{On the necessity of shifting} To derive this result, we shall restate the multilinear PageRank problem as the limit point of an ordinary differential equation. There are other ways to derive this result as well, but this one is familiar and relatively straightforward. Consider the ordinary differential equation: 
\begin{equation} \label{eq:ode}
 \frac{d\vx}{dt} = \alpha \cmP \vx^{m-1} + (1-\alpha) \vv - \vx. 
\end{equation}
A forward Euler discretization yields the iteration: 
\[ \vx\itn{k+1}^{} = \alpha h \cmP \vx\itn{k}^{m-1} + (1-\alpha) h \vv + (1-h) \vx\itn{k}, \]
which is identical to the shifted iteration~\eqref{eq:shifted} with $h = \frac{1}{1+\gamma}$. To determine if forward Euler converges, we need to study the Jacobian of the ordinary differential equation. Let $\mR$ be the flattening of $\cmP$ along the first index, then the Jacobian of the ODE \eqref{eq:ode} is: 
\[ \mJ(\vx) = \alpha \mR ( \mI \kron \vx \kron \cdots \kron \vx + \vx \kron \mI \kron \vx \kron \cdots \kron \vx + \vx \kron \cdots \kron \vx \kron \mI) - \mI. \] 
A necessary condition for the forward Euler method to converge is that it is absolutely stable. In this case, we need  $|1-h \rho(\mJ)| \le 1$, where $\rho$ is the spectral radius of the Jacobian. For all stochastic vectors $\vx$ generated by iterations of the algorithm, $\rho(\mJ(\vx)) \le (m-1) \alpha + 1 \le m$. Thus, $h \le 2/m$ is necessary for a general convergence result when $\alpha > 1/(m-1)$ . This, in turn, implies that $\gamma \ge (m-2)/2$. In the case that $\alpha < 1/(m-1)$, then the Jacobian already has eigenvalues within the required bounds and no shift is necessary.

\begin{remark}
Based on this analysis, we always recommend the shifted iteration with $\gamma \ge (m-2)/2$ for any problem with $\alpha > 1/(m-1)$.
\end{remark}


\subsection{An inner-outer iteration}

We now develop a non-linear iteration scheme using that uses multilinear PageRank, in the convergent regime, as a subroutine. To derive this method, we use the relationship between multilinear PageRank and the multilinear Markov chain formulation discussed in Section~\ref{sec:li}. Let $\mRbar  = \alpha \mR + (1-\alpha) \vv \ve^\trans$ then note that this the Markov chain form of the problem is: 
\[ \mRbar (\fullkron{\vx}{m-1}) = \vx \quad \Leftrightarrow \quad [ \alpha \mR + (1-\alpha) \vv \ve^\trans ] (\fullkron{\vx}{m-1}) = \vx. \] 
Equivalently, we have: 
\[ \frac{\alpha}{m-1} \mRbar (\fullkron{\vx}{m-1}) + \left(1-\frac{\alpha}{m-1}\right) \vx = \vx. \]
From here, the nonlinear iteration emerges: 
\begin{equation} \vx\itn{k+1} = \frac{\alpha}{m-1} \mRbar (\fullkron{\vx\itn{k+1}}{m-1}) + \left(1-\frac{\alpha}{m-1}\right) \vx\itn{k}. \end{equation}
Each iteration involves solving a multilinear PageRank problem with $\mRbar, \alpha/(m-1),$ and $\vx\itn{k}$. Because $\alpha < 1$, then $\alpha/(m-1) < 1/(m-1)$ and the solution of these subproblems is unique, and thus, the method is well-defined. Not surprisingly, this method also converges when $\alpha < 1/(m-1)$.

\begin{theorem}
Let $\cmP$ be an order-$m$ stochastic tensor, let $\vv$ and $\vx\itn{0}$ be stochastic vectors, and let $\alpha < 1/(m-1)$. Let $\mR$ be the flattening of $\cmP$ along the first index and let $\mRbar = \alpha \mR + (1-\alpha) \vv \ve^\trans$.
 The inner-outer multilinear PageRank iteration
\[ \vx\itn{k+1} = \frac{\alpha}{m-1} \mRbar (\fullkron{\vx\itn{k+1}}{m-1}) + \left(1-\frac{\alpha}{m-1}\right) \vx\itn{k} \]
 converges to the unique solution $\vx$ of the multilinear PageRank problem and also 
 \[ \normof[1]{\vx\itn{k} - \vx} \le \left( \frac{1-\alpha/(m-1)}{1-\alpha^2} \right)^{k} \normof[1]{\vx\itn{0} - \vx} \le 2 \left( \frac{1-\alpha/(m-1)}{1-\alpha^2} \right)^k. \]
\end{theorem}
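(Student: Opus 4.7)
The plan is to mimic the structure of the proofs for the plain and shifted fixed-point iterations, but to exploit one extra piece of structure: the difference of two Kronecker-power stochastic vectors sums to zero, which lets the rank-one teleportation part of $\mRbar$ drop out. That is exactly what produces the sharper $1-\alpha^2$ in the denominator of the contraction factor, rather than the $1-\alpha$ one would obtain from using only the column-stochasticity of $\mRbar$.

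First I would verify that the iteration is well defined and preserves the simplex. Since $\alpha/(m-1) < 1/(m-1)$ for any $\alpha < 1$, the inner equation for $\vx\itn{k+1}$ is itself a multilinear PageRank problem with parameter $\alpha/(m-1)$, teleportation vector $\vx\itn{k}$, and stochastic transition $\mRbar$; by Theorem~\ref{thm:tpr-unique} (applied with $\alpha/(m-1)$ in place of $\alpha$) it has a unique stochastic solution, so the iterate $\vx\itn{k+1}$ is well defined and stochastic whenever $\vx\itn{k}$ is, and we can proceed by induction from a stochastic $\vx\itn{0}$. The existence and uniqueness of an outer fixed point $\vx$ is given directly by Theorem~\ref{thm:tpr-unique} under the hypothesis $\alpha < 1/(m-1)$, and $\vx$ trivially satisfies the inner-outer relation.

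Next I would subtract the fixed-point identity from the iteration to obtain
\[
\vx\itn{k+1} - \vx
 = \tfrac{\alpha}{m-1}\, \mRbar \bigl(\fullkron{\vx\itn{k+1}}{m-1} - \fullkron{\vx}{m-1}\bigr)
 + \bigl(1 - \tfrac{\alpha}{m-1}\bigr)(\vx\itn{k} - \vx).
\]
The critical observation is that both $\fullkron{\vx\itn{k+1}}{m-1}$ and $\fullkron{\vx}{m-1}$ are stochastic, so their difference $\vw$ satisfies $\ve^\trans \vw = 0$. Consequently the rank-one part of $\mRbar = \alpha \mR + (1-\alpha)\vv\ve^\trans$ annihilates $\vw$, and $\mRbar \vw = \alpha \mR \vw$. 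Applying Lemma~\ref{lem:stochastic-diff} to bound $\normof[1]{\vw}$ by $(m-1)\normof[1]{\vx\itn{k+1}-\vx}$ and using $\normof[1]{\mR \vw} \le \normof[1]{\vw}$ (since $\mR$ is column stochastic) yields
\[
\normof[1]{\mRbar \bigl(\fullkron{\vx\itn{k+1}}{m-1} - \fullkron{\vx}{m-1}\bigr)}
 \le \alpha(m-1)\,\normof[1]{\vx\itn{k+1}-\vx}.
\]

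Finally I would take $\ell^1$ norms in the subtracted identity, plug in this bound, and collect the $\vx\itn{k+1}-\vx$ terms on the left to get
\[
(1-\alpha^2)\,\normof[1]{\vx\itn{k+1}-\vx}
 \le \bigl(1-\tfrac{\alpha}{m-1}\bigr)\,\normof[1]{\vx\itn{k}-\vx},
\]
which is the claimed one-step contraction. Iterating gives the stated geometric bound, and the initial factor $2$ comes from $\normof[1]{\vx\itn{0}-\vx}\le 2$ for two stochastic vectors. One checks directly that the ratio $(1-\alpha/(m-1))/(1-\alpha^2)$ is strictly less than $1$ precisely when $\alpha < 1/(m-1)$, so the contraction is genuine in the stated regime. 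The only real subtlety is the zero-sum argument that produces $\alpha^2$ instead of $\alpha$; everything else is bookkeeping around Lemma~\ref{lem:stochastic-diff} exactly as in the earlier fixed-point theorems.
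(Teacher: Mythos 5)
Your proposal is correct and follows essentially the same route as the paper: subtract the fixed-point identity, observe that the zero-sum difference of Kronecker powers annihilates the rank-one teleportation part of $\mRbar$ (yielding the $\alpha^2$), bound the remaining term via Lemma~\ref{lem:stochastic-diff}, and collect the implicit $\vx\itn{k+1}$ term on the left to obtain the contraction factor $(1-\alpha/(m-1))/(1-\alpha^2)$. The well-definedness argument via the inner subproblem with parameter $\alpha/(m-1)$ is also exactly the paper's justification, stated just before the theorem.
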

\begin{proof}
Recall that this is the regime of $\alpha$ when the solution is unique. Note that 
\[ \begin{aligned}
\vx\itn{k+1} - \vx 
& = \frac{\alpha}{m-1} \mRbar ( \fullkron{\vx\itn{k+1}}{m-1} - \fullkron{\vx}{m-1} ) + \left(1 - \frac{\alpha}{m-1}\right) (\vx\itn{k} - \vx) \\
& = \frac{\alpha^2}{m-1} \mR ( \fullkron{\vx\itn{k+1}}{m-1} - \fullkron{\vx}{m-1} ) + \left(1 - \frac{\alpha}{m-1}\right) (\vx\itn{k} - \vx). 
\end{aligned} \]
By using Lemma~\ref{lem:stochastic-diff}, we can bound the norm of the difference of the $m-1$ term Kronecker products by $(m-1) \normof[1]{\vx\itn{k+1} - \vx}$. Thus, 
\[ \normof[1]{\vx\itn{k+1} - \vx} \le \alpha^2 \normof[1]{\vx\itn{k+1} - \vx} + \left(1-\frac{\alpha}{m-1}\right) \normof[1]{\vx\itn{k} - \vx}, \]
and the scheme converges linearly with rate $ \frac{1-\alpha/(m-1)}{1-\alpha^2} < 1$ when $\alpha < 1/(m-1)$.
\end{proof}

In comparison with the shifted method, each iteration of the inner-outer method is far more expensive and involves solving a multilinear PageRank method. However, if $\cmP$ is only available through a fast operator, this may be the only method possible. In Figure~\ref{fig:innout}, we show that the inner-outer method converges in the case that the shifted method failed to converge. Increasing $\alpha$ to $0.99$, however, now generates a problem where the inner-outer method will not converge.

\begin{figure}
\includegraphics[width=0.5\linewidth]{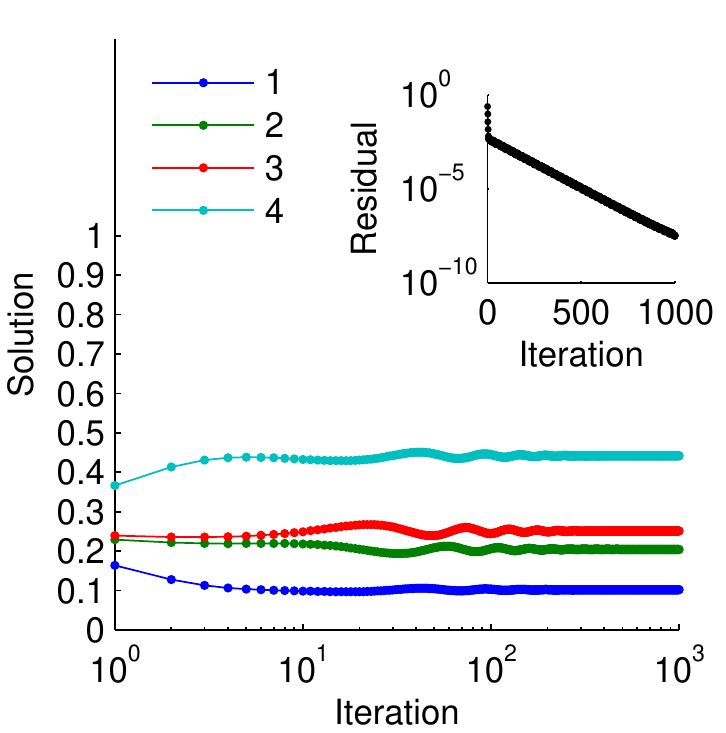}%
\includegraphics[width=0.5\linewidth]{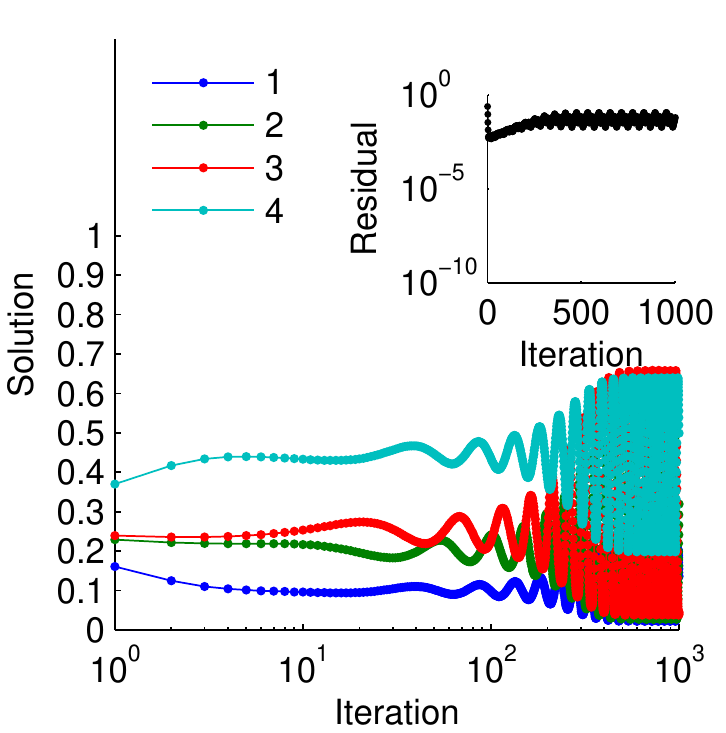}%

\caption{At left, the iterates of the inner-outer iteration for $\mR_2$ with $\alpha = 0.97$ shows that it converges to a solution, whereas this same problem did not converge with the shifted method. At right, the result of the shifted iteration on $\mR_2$ with $\alpha=0.90$ again shows an example that doesn't converge. }
\label{fig:innout}
\end{figure}

\subsection{An inverse iteration}

Another algorithm we consider is given by our interpretation of the multilinear PageRank solution as a stochastic process.  Observe, for the second-order case, 
\[ \alpha \mR (\vx \kron \vx) = \tfrac{\alpha}{2}\mR (\vx \kron \mI + \mI \kron \vx) = \alpha \, [ \tfrac{1}{2} \mR (\vx \kron \mI) + \tfrac{1}{2} \mR ( \mI \kron \vx) ]. \]
Both matrices 
\[ \mR (\vx \kron \mI) \quad \text{ and } \quad \mR (\mI \kron \vx) \]
are stochastic. Let $\mS(\vx) = \tfrac{1}{2} \mR (\vx \kron \mI) + \tfrac{1}{2} \mR ( \mI \kron \vx)$ be stochastic sum of these two matrices. Then the multilinear PageRank vector satisfies: 
\[ \vx = \alpha \mS(\vx) \vx + (1-\alpha) \vv. \]
This equation has a subtle interpretation. The multilinear PageRank vector \emph{is} the PageRank vector of a solution dependent Markov process. The stochastic process presented in Section~\ref{sec:process} shows this in a slightly different manner. The iteration that arises is a simple fixed-point idea using this interpretation: 
\[ \vx\itn{k+1} = \alpha \mS(\vx\itn{k}) \vx\itn{k+1} + (1-\alpha) \vv. \]
Thus, at each step, we solve a PageRank problem given the current iterate to produce the subsequent vector. For this iteration, we could then leverage a fast PageRank solver if there is a way of computing $\mS(\vx\itn{k})$ effectively or $\mS(\vx\itn{k}) \vx$ effectively. The method for a general problem is the same, except for the definition of $\mS$. In general, let
\begin{equation} \label{eq:mlpr-markov}
 \mS(\vx) = \tfrac{1}{m-1} \mR (\mI \kron \fullkron{\vx\itn{k}}{m-2} + \vx\itn{k} \kron \mI \kron \fullkron{\vx\itn{k}}{m-3} + \dots + \fullkron{\vx\itn{k}}{m-2} \kron \mI). 
 \end{equation}
This iteration is guaranteed to converge in the unique solution regime.

\begin{theorem}
Let $\cmP$ be an order-$m$ stochastic tensor, let $\vv$ and $\vx\itn{0}$ be stochastic vectors, and let $\alpha < 1/(m-1)$. Let $\mS(\vx\itn{k})$ be an $n \times n$ stochastic matrix defined via~\eqref{eq:mlpr-markov}. The inverse multilinear PageRank iteration 
\[ \vx\itn{k+1} = \alpha \mS(\vx\itn{k}) \vx\itn{k+1} + (1-\alpha) \vv \]
converges to the unique solution $\vx$ of the multilinear PageRank problem and also 
\[ \normof[1]{\vx\itn{k} - \vx} \le  \left( \frac{(m-2) \alpha}{1-\alpha} \right)^k \normof[1]{\vx\itn{0} - \vx} \le 2 \left( \frac{(m-2) \alpha}{1-\alpha} \right)^k. \]
\end{theorem}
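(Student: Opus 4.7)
The plan is to run a $1$-norm contraction argument analogous to the previous three proofs, with the twist that each iterate is now defined \emph{implicitly} as a PageRank vector. First I would check that the iteration is well-defined: since $\mS(\vx\itn{k})$ is column stochastic by construction and $\alpha < 1/(m-1) \le 1$, the matrix $\mI - \alpha \mS(\vx\itn{k})$ is invertible and $\vx\itn{k+1}$ is precisely the (stochastic) PageRank vector of the problem $(\alpha, \mS(\vx\itn{k}), \vv)$. Theorem~\ref{thm:tpr-unique} gives existence and uniqueness of the multilinear PageRank vector $\vx$ in this regime, and a direct expansion of~\eqref{eq:mlpr-markov} at $\vy = \vx$ shows $\mS(\vx)\vx = \mR \fullkron{\vx}{m-1}$, so $\vx$ itself satisfies $\vx = \alpha \mS(\vx)\vx + (1-\alpha)\vv$.

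Next I would subtract the two fixed-point equations and regroup,
\[ \vx\itn{k+1} - \vx = \alpha \mS(\vx\itn{k})(\vx\itn{k+1} - \vx) + \alpha \bigl[\mS(\vx\itn{k}) - \mS(\vx)\bigr] \vx, \]
take $1$-norms, and use that a column stochastic matrix does not increase the $1$-norm to obtain
\[ (1-\alpha)\normof[1]{\vx\itn{k+1} - \vx} \le \alpha \normof[1]{\bigl[\mS(\vx\itn{k}) - \mS(\vx)\bigr]\vx}. \]
The main technical step, and the place where all of the order-$m$ combinatorics enters, is to bound this residual by $(m-2)\normof[1]{\vx\itn{k} - \vx}$. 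Using~\eqref{eq:mlpr-markov}, $[\mS(\vx\itn{k}) - \mS(\vx)]\vx$ equals $\tfrac{1}{m-1}\mR$ times a sum of $m-1$ differences $\vw_i - \fullkron{\vx}{m-1}$, where $\vw_i$ is a Kronecker product with $\vx$ in slot $i$ and $\vx\itn{k}$ in the remaining $m-2$ slots. Since $\mR$ is column stochastic it does not increase the $1$-norm, and Lemma~\ref{lem:stochastic-diff} bounds $\normof[1]{\vw_i - \fullkron{\vx}{m-1}} \le (m-2)\normof[1]{\vx\itn{k} - \vx}$ because the two products disagree in exactly $m-2$ slots; averaging over the $m-1$ terms gives the claimed bound.

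Combining everything yields $\normof[1]{\vx\itn{k+1} - \vx} \le \tfrac{(m-2)\alpha}{1-\alpha}\normof[1]{\vx\itn{k} - \vx}$, and the contraction factor is strictly less than one precisely when $\alpha < 1/(m-1)$. Iterating this bound and using $\normof[1]{\vx\itn{0} - \vx} \le 2$ by stochasticity then delivers both estimates stated in the theorem. I do not anticipate obstacles outside the bookkeeping needed to expand the $m-1$ Kronecker terms cleanly; the real content is the application of Lemma~\ref{lem:stochastic-diff} with exactly the right number ($m-2$) of mismatched slots per term.
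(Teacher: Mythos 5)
Your proposal is correct and follows essentially the same route as the paper: both arguments reduce to the inequality $(1-\alpha)\normof[1]{\vx\itn{k+1}-\vx} \le \alpha(m-2)\normof[1]{\vx\itn{k}-\vx}$ by applying Lemma~\ref{lem:stochastic-diff} to the $m-1$ Kronecker-product differences arising from $\mS(\vx\itn{k})\vx\itn{k+1}-\mS(\vx)\vx$. The only cosmetic difference is that you split off the $\alpha\mS(\vx\itn{k})(\vx\itn{k+1}-\vx)$ term and bound it by column-stochasticity before invoking the lemma, whereas the paper absorbs that contribution into the same lemma application; your explicit checks of well-definedness and of $\mS(\vx)\vx = \mR(\fullkron{\vx}{m-1})$ are welcome additions the paper leaves implicit.
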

\begin{proof} We complete the proof using the terms involved in the fourth-order case ($m=4$) because it simplifies the indexing tremendously although the terms in our proof will be entirely general. Consider the error at the $(k+1)$\textsuperscript{th} iteration: 
\[\begin{aligned}
 \vx\itn{k+1} - \vx & = \frac{\alpha}{m-1} \mR [ (\mI \kron \vx\itn{k} \kron \vx\itn{k}  + \vx\itn{k} \kron \mI \kron \vx\itn{k} + \vx\itn{k} \kron \vx\itn{k} \kron \mI ) \vx\itn{k+1} \\
 & \qquad \qquad \qquad - (\mI \kron \vx \kron \vx + \vx \kron \mI \kron \vx + \vx \kron \vx \kron \mI ) \vx ] \\ 
  & = \frac{\alpha}{m-1} \mR [ (\vx\itn{k+1} \kron \vx\itn{k} \kron \vx\itn{k}  + \vx\itn{k} \kron \vx\itn{k+1} \kron \vx\itn{k} + \vx\itn{k} \kron \vx\itn{k} \kron \vx\itn{k+1} ) \\
  & \qquad \qquad \qquad - ( \vx \kron \vx \kron \vx + \vx \kron \vx \kron \vx + \vx \kron \vx \kron \vx). ]
  \end{aligned} \]
At this point, it suffices to prove that terms of the form $\normof[1]{ \vx\itn{k+1} \kron \vx\itn{k} \kron \vx\itn{k} - \vx \kron \vx \kron \vx } $ are bounded by $ ( m-1) \normof[1]{ \vx\itn{k+1} - \vx} + (m-1)(m-2) \normof[1]{\vx\itn{k} - \vx}$. Showing this for one term also suffices because all of these terms are equivalent up to a permutation.

We continue by Lemma~\ref{lem:stochastic-diff}, which yields 
\[ \normof[1]{ \vx\itn{k+1} \kron \vx\itn{k} \kron \vx\itn{k} - \vx \kron \vx \kron \vx } \le \normof[1]{ \vx\itn{k+1} - \vx} + 2 \normof[1]{ \vx\itn{k} - \vx}  \]
in the third-order case, and $\normof[1]{ \vx\itn{k+1} - \vx} + (m-2) \normof[1]{ \vx\itn{k} - \vx}$ in general. Since there are $m-1$ of these terms, we are done.
\end{proof}

In comparison to the inner-outer iteration, this method requires detailed knowledge of the operator $\cmP$ in order to form $\mS(\vx_k)$ or even matrix-vector products $\mS(\vx_k) \vz$. In some applications this may be easy. In Figure~\ref{fig:inverse}, we illustrate the convergence of the inverse iteration on the problems that the inner-outer method's illustration used. The convergence pattern is the same. 

\begin{figure}
\includegraphics[width=0.5\linewidth]{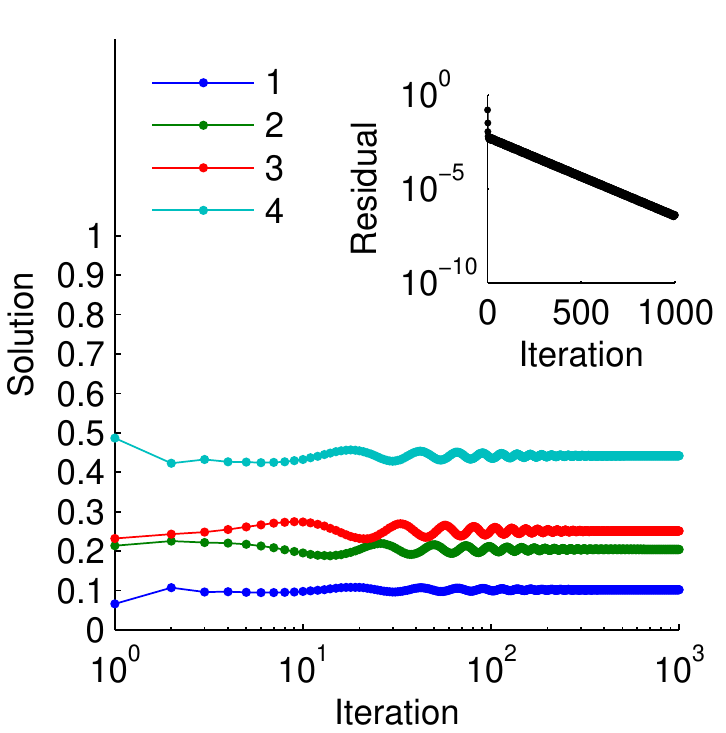}%
\includegraphics[width=0.5\linewidth]{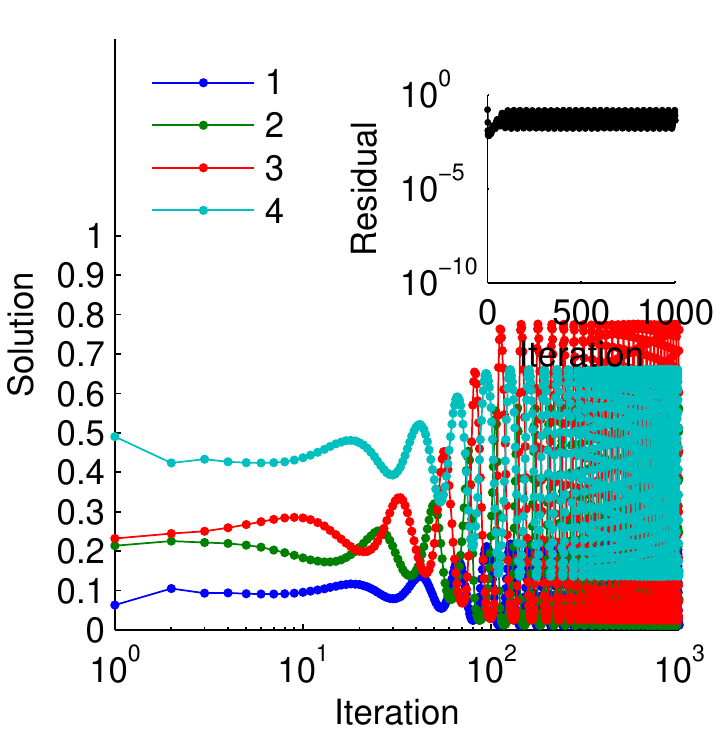}%

\caption{At left, the iterates from the inverse iteration to solve problem $\mR_2$ with $\alpha = 0.97$ and at right, the iterates to solve problem $\mR_2$ with $\alpha = 0.99$. Both show similar convergence behavior as to the inner-outer method.}
\label{fig:inverse}
\end{figure}

\subsection{Newton's method}
Finally, consider Newton's method for solving the nonlinear equation: 
\[ \vf(\vx) = \alpha \mR (\vx \kron \vx) + (1-\alpha) \vv - \vx = 0. \]
The Jacobian of this operator is: 
\[ \mJ(\vx) = \alpha \mR (\vx \kron \mI + \mI \kron \vx) - \mI. \]

We now prove the following theorem about the convergence of Newton's method.

\begin{theorem} \label{thm:newton}
Let $\cmP$ be a third-order stochastic tensor, let $\vv$ be a stochastic vector, and let $\alpha < 1/2$. Let $\mR$ be the flattening of $\cmP$ along the first index. Let 
\[ \vf(\vx) = \alpha \mR (\vx \kron \vx) + (1-\alpha) \vv - \vx = 0. \] 
Newton's method to solve $\vf(\vx) = 0$, and hence compute the unique multilinear PageRank vector, is the iteration:
\begin{equation} \label{eq:newton}
 \left[ \mI - \alpha \mR (\vx\itn{k} \kron \mI + \mI \kron \vx\itn{k}) \right] \vp\itn{k} = \vf(\vx\itn{k}) \qquad \vx\itn{k+1} = \vx\itn{k} + \vp\itn{k} \qquad \vx\itn{0} = 0.
\end{equation}
It produces a unique sequence of iterates where: 
\[ \vf(\vx\itn{k}) \ge 0 \quad \ve^\trans \vf(\vx\itn{k}) = \frac{\alpha (\ve^\trans \vf(\vx\itn{k-1}))^2}{(1-2\alpha)^2 + 4 \alpha \ve^\trans \vf(\vx\itn{k-1})} \le \alpha (1-\alpha)^2 \frac{1}{4^{k-1}} \quad k \ge 1  \]
that also converges quadratically in the $k\to\infty$ limit.
\end{theorem}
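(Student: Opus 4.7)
\emph{Proof proposal.} My plan is to exploit that $\mR$ is column stochastic to collapse the vector Newton iteration onto a scalar Newton iteration for the quadratic $g(s) = \alpha s^{2} - s + (1-\alpha)$, while simultaneously establishing componentwise nonnegativity of the iterates via a Neumann-series argument. The enabling identities are $\ve^\trans \mR = \ve^\trans$ and $\ve^\trans(\vx\kron\mI) = \ve^\trans(\mI\kron\vx) = (\ve^\trans\vx)\ve^\trans$, which yield $\ve^\trans \vf(\vx) = g(\ve^\trans\vx)$ and $\ve^\trans[\mI - \alpha\mR(\vx\kron\mI + \mI\kron\vx)] = (1-2\alpha \ve^\trans\vx)\ve^\trans$. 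Writing $s_k := \ve^\trans\vx\itn{k}$ and left-multiplying the Newton equation~\eqref{eq:newton} by $\ve^\trans$ gives $(1-2\alpha s_k)\ve^\trans\vp\itn{k} = g(s_k)$, so $s_{k+1} = s_k - g(s_k)/g'(s_k)$ is exactly one-dimensional Newton iteration on $g$.

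I would then run an induction on $k$ establishing the joint invariant $\vx\itn{k} \ge 0$, $s_k \le 1$, and $\vf(\vx\itn{k}) \ge 0$. The base case is immediate from $\vx\itn{0}=0$ and $\vf(\vx\itn{0}) = (1-\alpha)\vv$. For the inductive step, the matrix $\alpha\mR(\vx\itn{k}\kron\mI + \mI\kron\vx\itn{k})$ is nonnegative with column sums $2\alpha s_k \le 2\alpha < 1$, so its $1$-norm is strictly below $1$; the Neumann series then shows the Jacobian factor is invertible with a \emph{nonnegative} inverse, giving $\vp\itn{k} \ge 0$ and hence $\vx\itn{k+1} \ge \vx\itn{k} \ge 0$. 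Because $g$ is a convex parabola with smaller root $s^{*}=1$ and $g'(1) = 2\alpha - 1 < 0$, the standard convex-Newton argument (use $g(1) \ge g(s_k) + g'(s_k)(1-s_k)$ and divide by $g'(s_k) < 0$) shows $s_{k+1} \le 1$. The crucial ingredient is that $\vf$ is a \emph{quadratic} map, so its exact Taylor expansion reads
\[ \vf(\vx\itn{k} + \vh) = \vf(\vx\itn{k}) - [\mI - \alpha\mR(\vx\itn{k}\kron\mI + \mI\kron\vx\itn{k})]\vh + \alpha\mR(\vh\kron\vh); \]
substituting $\vh = \vp\itn{k}$ collapses the first two terms through the Newton equation and delivers the clean key identity $\vf(\vx\itn{k+1}) = \alpha\mR(\vp\itn{k}\kron\vp\itn{k}) \ge 0$, completing the induction.

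Next, I would take $\ve^\trans$ of this key identity to get $g(s_{k+1}) = \alpha(\ve^\trans\vp\itn{k})^{2} = \alpha[g(s_k)/g'(s_k)]^{2}$. The recurrence in the theorem then reduces to the purely algebraic identity $[g'(s)]^{2} = (1-2\alpha)^{2} + 4\alpha g(s)$, which follows from a direct expansion using $g(s) = \alpha s^{2} - s + (1-\alpha)$ and $g'(s) = 2\alpha s - 1$. Because $g(s_k) \ge 0$, discarding the $(1-2\alpha)^{2}$ term in the denominator yields the one-step contraction $g(s_{k+1}) \le g(s_k)/4$, and a direct computation gives $g(s_1) = \alpha(1-\alpha)^{2}$, so inductively $g(s_k) \le \alpha(1-\alpha)^{2}/4^{k-1}$ for $k \ge 1$.

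Finally, since $\vf(\vx\itn{k}) \ge 0$ gives $\normof[1]{\vf(\vx\itn{k})} = g(s_k) \to 0$, and $\vx\itn{k}$ is componentwise nondecreasing with $\normof[1]{\vx\itn{k}} = s_k \le 1$, the sequence converges componentwise to the unique multilinear PageRank vector (Theorem~\ref{thm:tpr-unique}). Quadratic convergence is immediate from the recurrence: as $g(s_k) \to 0$, the denominator tends to $(1-2\alpha)^{2} > 0$, so $g(s_{k+1})/g(s_k)^{2} \to \alpha/(1-2\alpha)^{2}$. The main obstacle, and essentially the only nonroutine step, is spotting the identity $\vf(\vx\itn{k+1}) = \alpha\mR(\vp\itn{k}\kron\vp\itn{k})$: it simultaneously propagates nonnegativity of the residual, furnishes the scalar recurrence through a single application of $\ve^\trans$, and reduces the whole argument to the textbook analysis of Newton's method on a convex quadratic.
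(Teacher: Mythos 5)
Your proposal is correct and follows essentially the same route as the paper's proof: the exact quadratic Taylor expansion giving the key identity $\vf(\vx\itn{k+1}) = \alpha\mR(\vp\itn{k}\kron\vp\itn{k})$, the induction establishing $\vx\itn{k}\ge 0$, $\ve^\trans\vx\itn{k}\le 1$, $\vf(\vx\itn{k})\ge 0$ via nonnegativity of the Jacobian inverse, and the reduction to the scalar recurrence. Your packaging of the scalar part as textbook Newton on $g(s)=\alpha s^2 - s + (1-\alpha)$ is a nice refinement: the convex-Newton argument supplies the ``quick, but omitted, calculation'' that $z_{k+1}\le 1$, and the identity $[g'(s)]^2 = (1-2\alpha)^2 + 4\alpha g(s)$ sidesteps the paper's quadratic-formula root selection.
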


This result shows that Newton's method always converges quadratically fast when solving multilinear PageRank vectors inside the unique regime.

\begin{proof}
 We outline the following sequence of facts and lemmas we provide to compute the result. The key idea is to use the result that second-order multilinear PageRank is a 2nd-degree polynomial, and hence, we can use Taylor's theorem to derive an \emph{exact} prediction of the function value at successive iterations. We first prove this \emph{key fact}. Subsequent steps of the proof establish that the sequence of iterates is unique and well-defined (that is, that the Jacobian is always non-singular). This involves showing, additionally, that $\vx\itn{k} \ge 0$, $\ve^\trans \vx\itn{k} \le 1$, and $\vf(\vx\itn{k}) \ge 0$. Let $f_k = \ve^\trans \vf$, since $\vf(\vx\itn{k}) \ge 0$, showing that $f_k \to 0$ suffices to show convergence. Finally, we derive a recurrence: \begin{equation} \label{eq:newton-recur} f_{k+1} = \frac{\alpha f_k^2}{(1-2\alpha)^2 + 4 \alpha f_k}. \end{equation}
 
\noindent \textbf{Key fact.}  Let $\vp\itn{k} = \vx\itn{k+1} - \vx\itn{k}$. If the Jacobian $\mJ(\vx\itn{k})$ is non-singular in the $k$\textsuperscript{th} iteration, 
then $\vf(\vx\itn{k+1}) = \alpha \mR (\vp\itn{k} \kron \vp\itn{k})$. To prove this, we use an exact version of Taylor's theorem around the point $\vx\itn{k}$: 
\[ \vf(\vx\itn{k} + \vp) = \vf(\vx\itn{k}) + \mJ(\vx\itn{k}) \vp + \tfrac{1}{2} \cmT \vp^2, \]
where $\cmT \vp^2 = \alpha \mR (\mI \kron \mI + \mI \kron \mI) (\vp \kron \vp)$ is \emph{independent} of the current point. Note also that Newton's method chooses $\vp$ such that $\vf(\vx\itn{k}) + \mJ(\vx\itn{k}) \vp = 0$.  Then
\[ \vf(\vx\itn{k+1}) =  \vf(\vx\itn{k} + \vp\itn{k}) = \alpha \mR (\vp\itn{k} \kron \vp\itn{k}). \]

\noindent \textbf{Well-defined sequence.} We now show that $\mJ(\vx\itn{k})$ is non-singular for all $\vx\itn{k}$, and hence, the Newton iteration is well-defined. It is easy to do so if we establish that
\begin{equation} \label{eq:newton-props}
 \vf(\vx\itn{k}) \ge 0, \vx\itn{k} \ge 0, \text{ and } z_k = \ve^\trans \vx\itn{k} \le 1 
\end{equation}
also holds at each iteration. Clearly, these properties hold for the initial iteration where $\vf(\vx\itn{0}) = (1-\alpha) \vv$. Thus, we proceed inductively.  Note that if $\vx\itn{k} \ge 0$ and $z_k \le 1$ then the Jacobian is non-singular because $-\mJ(\vx\itn{k}) = [\mI - \alpha \mR ( \vx\itn{k} \kron \mI + \mI \kron \vx\itn{k} ) ]$ is a strictly diagonally dominant matrix, $M$-matrix when $\alpha < 1/2$. (In fact, both $\mR ( \vx\itn{k} \kron \mI)$ and $\mR (\mI \kron \vx\itn{k})$ are nonnegative matrices with column norms equal to $z_k = \ve^\trans\vx\itn{k}$.) Thus, $\vx\itn{k+1}$ is well-defined and it remains to show that $\vf(\vx\itn{k+1}) \ge 0$, $\vx\itn{k+1} \ge 0$, and $z_{k+1} \le 1$. Now, by the definition of Newton's method: 
\[ \vx\itn{k+1} = \vx\itn{k} - \mJ(\vx\itn{k})^{-1} \vf(\vx\itn{k}), \] but $-\mJ$ is an $M$-matrix, and so $\vx\itn{k+1} \ge 0$. This also shows that $\vp\itn{k} = \vx\itn{k+1} - \vx\itn{k} \ge 0$, from which, we can use our key fact to derive that $\vf(\vx\itn{k+1}) \ge 0$. What remains to show is that $z_{k+1} \le 1$. By taking summations on both sides of \eqref{eq:newton}, we have: 
\[ (1-2\alpha z_k) (z_{k+1} - z_k) = \alpha z_k^2 + (1-\alpha) - z_k. \]
A quick, but omitted, calculation confirms that $z_{k+1} > 1$ implies $z_{k} > 1$. Thus, we completed our inductive conditions for \eqref{eq:newton-props}.

\noindent \textbf{Recurrence} We now show that \eqref{eq:newton-recur} holds. First, observe that 
\[ f_k = \alpha (\ve^\trans \vp\itn{k})^2 \qquad \ve^\trans \vp\itn{k} = \frac{f_k}{1-2 \alpha z_k} \qquad \alpha z_k^2 + (1-\alpha) - z_k - f_k = 0. \]
We now solve for $z_k$ in terms of $f_k$. This involves picking a root for the quadratic equation. Since $z_k \le 1$, this makes the choice the negative root in: 
\[ z_k = \frac{1 - \sqrt{ (1-2\alpha)^2 + 4 \alpha f_k }}{2 \alpha} \le 1. \] Assembling these pieces yields \eqref{eq:newton-recur}.

\noindent \textbf{Convergence} We have an easy result that $f_{k+1} \le \frac{1}{4} f_k$ by ignoring the term $(1-2\alpha)^2$ in the denominator. Also, by direct evaluation, $f_1 = \alpha (1-\alpha)^2$. Thus, 
\[ f_{k} \le \frac{1}{4^{k-1}} f_1 = \frac{1}{4^{k-1}} \alpha (1-\alpha)^2, \] which is one side of the convergence rate. The sequence for $f_k$ also converges quadratically in the limit because $\lim_{k \to \infty} \frac{f_{k+1}}{f_{k}^2}  = \frac{\alpha}{(1-2 \alpha)^2}$.
\end{proof}

\paragraph{A practical, always-stochastic Newton iteration}
The Newton iteration from Theorem~\ref{thm:newton} begins as $\vx\itn{0} = 0$ and, when $\alpha < 1/2$, gradually grows the solution until it becomes stochastic and attains optimality. For problems when $\alpha > 1/2$, however, this iteration often converges to a fixed point where $\vx$ is not stochastic. (In fact, it always did this in our brief investigations.) To make our codes practical for problems where $\alpha > 1/2$, then, we enforce an explicit stochastic normalization after each Newton step:
\begin{equation} \label{eq:newton-stochastic}
[\eye - \alpha \mR(\vx\itn{k} \kron \mI + \mI \kron \vx\itn{k})] \vp\itn{k} = \vf(\vx\itn{k}) \quad \vx\itn{k+1} = \text{proj}(\vx\itn{k} + \vp\itn{k}) \quad \vx\itn{0} = (1-\alpha) \vv,
\end{equation}
where 
\[ \text{proj}(\vx) = \max(\vx,0) / \ve^\trans \max(\vx,0) \]
is a projection operator onto the probability simplex that sets negative elements to zero and then normalizes those left to sum to one.  We found this iteration superior to a few other choices including using a proximal point projection operator to produce always stochastic iterates~\cite[\S6.2.5]{Parikh-2014-prox}. We illustrate an example of the difference in Figure~\ref{fig:newton-stochastic} where the always-stochastic iteration solve the problem and the iteration without this projection converges to a non-stochastic fixed-point. Note that, like a general instance of Newton's method, the system $[\eye - \alpha \mR(\vx\itn{k} \kron \mI + \mI \kron \vx\itn{k})]$ may be singular. We never ran into such a case in our experiments and in our study.

\begin{figure}
\includegraphics[width=0.5\linewidth]{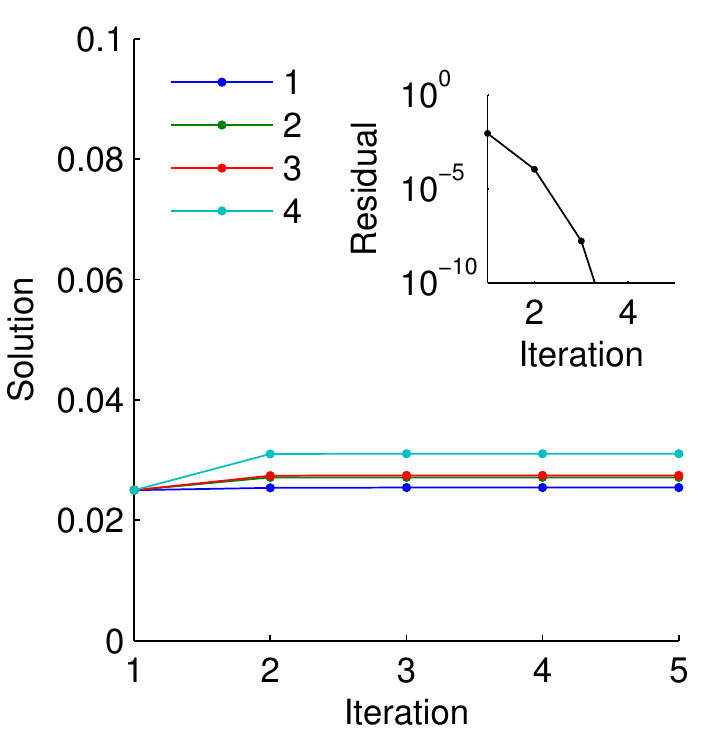}%
\includegraphics[width=0.5\linewidth]{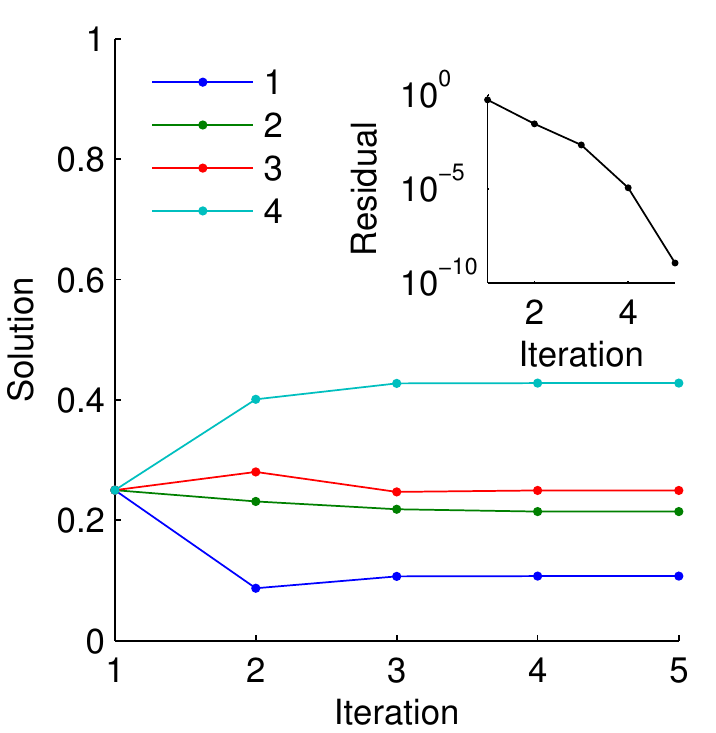}%

\caption{For problem $\mR_2$ with $\alpha = 0.99$, the Newton iteration from Theorem~\ref{thm:newton} (left figure) converges to a non-stochastic solution --- note the scale of the solution axis. The always-stochastic iteration~\eqref{eq:newton-stochastic} (right figure) converges for this problem. Thus, we recommend the iteration~\eqref{eq:newton-stochastic} when $\alpha > \frac{1}{m-1}$.}
\label{fig:newton-stochastic}
\end{figure}

We further illustrate the behavior of Newton's method on $\mR_2$ with $\alpha = 0.97$ and $\mR_2$ with $\alpha = 0.99$ in Figure~\ref{fig:newton}. The second of these problems did not converge for either the inner-outer or inverse iteration. Newton's method solves it in just a few iterations. In comparison to both the inner-outer and inverse iteration, however, Newton's method requires even more direct access to $\cmP$ in order to solve for the steps with the Jacobian.

\begin{figure}
\includegraphics[width=0.5\linewidth]{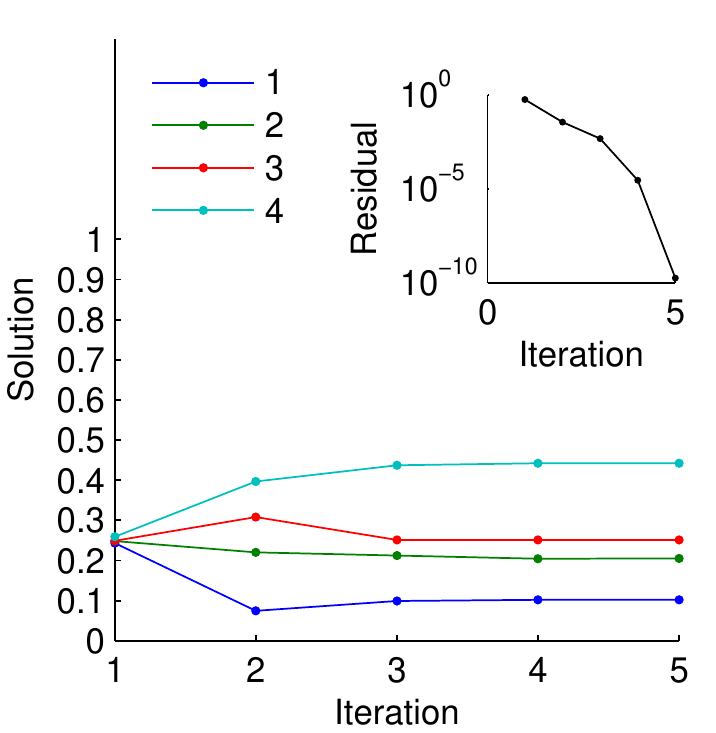}%
\includegraphics[width=0.5\linewidth]{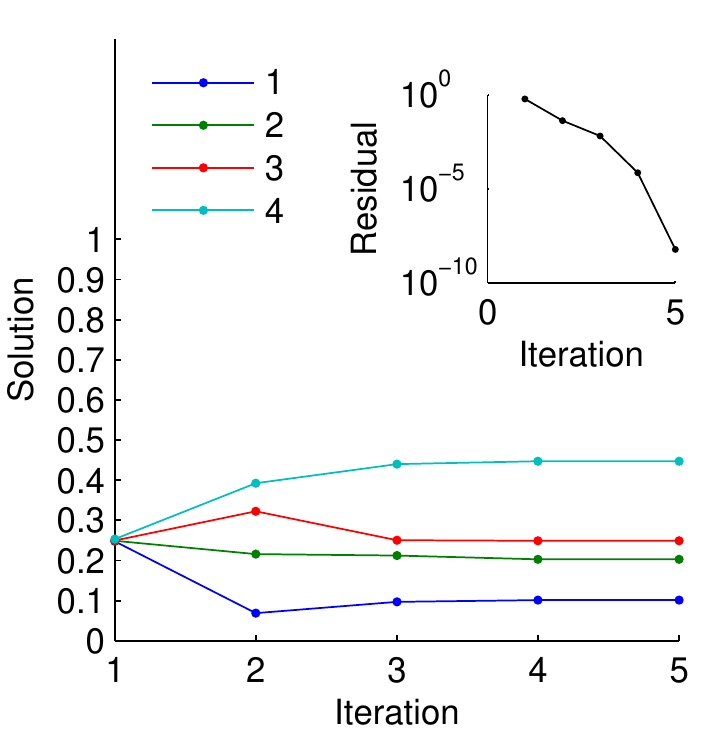}%

\caption{At left, the iterates of Newton's method to solve problem $\mR_2$ with $\alpha = 0.97$ and at right, the iterates to solve problem $\mR_2$ with $\alpha = 0.99$.  Both sequences converge unlike the inner-outer and inverse iterations.}
\label{fig:newton}
\end{figure}

\section{Experimental Results}
\label{sec:experiments}

To evaluate these algorithms, we create a database of problematic tensors. We then use this database to address two questions. 
\begin{enumerate}
\item What value of the shift is most reliable?
\item Which method has the most reliable convergence?
\end{enumerate}
In terms of reliability, we wish for the method to generate a residual smaller than $10^{-8}$ before reaching the maximum iteration count where the residual for a potential solution is given by~\eqref{eq:residual}. In many of the experiments, we run the methods for between 10,000 to 100,000 iterations. If we do not see convergence in this period, we deem a particular trial a failure. The value of $\vv$ is always $\ve /n$ but $\alpha$ will vary between our trials.
Before describing these results, we begin by discussing how we created the test problems.

\subsection{Problems}
We used exhaustive enumeration to identify $2 \times 2 \times 2$ and $3 \times 3 \times 3$ binary tensors, which we then normalized to stochastic tensors, that exhibited convergence problems with the fixed point or shifted methods. We also randomly sampled many $4 \times 4 \times 4$ binary problems and saved those that showed slow or erratic convergence for these same algorithms. We used $\alpha = 0.99$ and $\vv = \ve/n$ for these studies. The $6 \times 6 \times 6$ problems were constructed randomly in an attempt to be adversarial. Tensors with strong ``directionality'' seemed to arise as interesting cases in much of our theoretical study (this is not presented here). By this, we mean, for instance, tensors where a single state has many incoming links. We created a random procedure that generates problems where this is true (the exact method is in the online supplement) and used this to generate $6 \times 6 \times 6$ problems. In total, we have the following problems: 
\[ \begin{aligned}
3 \times 3 \times 3 & \qquad \text{5 problems} \\
4 \times 4 \times 4 & \qquad \text{19 problems} \\
6 \times 6 \times 6 & \qquad \text{5 problems}.
\end{aligned} \]
The full list of problems is in given in Appendix~\ref{app:problems}.

We used Matlab's symbolic toolbox to compute a set of exact solutions to these problems. These $6 \times 6 \times 6$ problems often had multiple solutions whereas the smaller problems only had a single solution (for the values of $\alpha$ we considered). While it is possible there are solutions missed by this tool, prior research found symbolic computation a reliable means of solving these polynomial systems of equations~\cite{Kolda-2011-sshopm}.

\subsection{Shifted iteration}
\label{sec:shiftstudy}
We begin our study by looking at a problem where the necessary shift suggested by the ODE theory ($\gamma = 1/2$ for third-order data) does not result in convergence. We are interested in whether or not varying the shift will alter the convergence behavior. This is indeed the case. For the problem $\mR_{4,11}$ from the appendix with $\alpha = 0.99$, we show the convergence of the residual as the shift $\gamma$ varies in Figure~\ref{fig:shiftstudy}. When $\gamma = 0.5$, the iteration does not converge. There is a point somewhere between $\gamma = 0.554$ and $\gamma = 0.5545$ where the iteration begins to converge. When we set $\gamma = 1$, the iteration converged rapidly. 

In the next experiment, we wished to understand how the reliability of the method depended on the shift $\gamma$. In Table~\ref{tab:shifttable}, we vary $\alpha$ and the shift $\gamma$ and look at how many of the $29$ test problems the shifted method can solve within 10,000 iterations. Recall that a method solves a problem if it pushes the residual below $10^{-8}$ within the iteration bound. The results from that table show that $\gamma = 1$ or $\gamma = 2$ results in the most reliable method. When $\gamma = 10$, then the method was less reliable. This is likely due to the shift delaying convergence for too long. Note that we chose many of the problems based on the failure of the shifted method with $\gamma = 0$ or $\gamma = 1/2$ and so the poor performance of these choices may not reflect their true reliability. Nevertheless, based on the results of this table, we recommend a shift of $\gamma = 1$ for a third-order problem, or a shift of $\gamma = m-2$ for a problem with an order-$m$ tensor. 

\begin{figure}
\centering
 \includegraphics{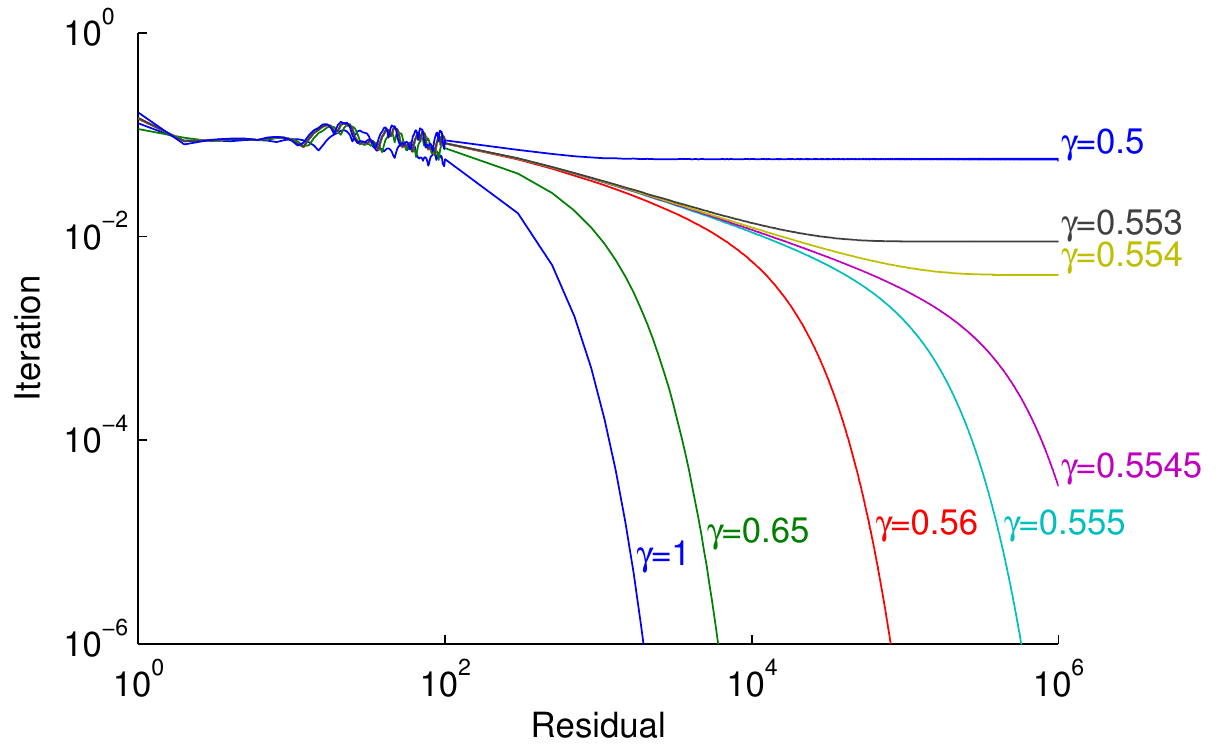}
 \caption{For the problem $\mR_{4,11}$ with $\alpha = 0.99$ and $\vv = \ve/n$, the shifted method will not converge unless $\gamma$ is slightly larger than 0.554. As $\gamma$ becomes larger, the convergence rate increases.}
 \label{fig:shiftstudy}
\end{figure}

\begin{table}
\caption{Each row of this table reports the number of problems successfully solved by the shifted iteration as the shift varies from $0$ to $10$. The values are reported for each value of $\alpha$ considered, as well as broken down into the different problem sizes considered.   }
\label{tab:shifttable}
 \centering
 \begin{tabularx}{0.7\linewidth}{>{\hsize=1.35\hsize}X >{\hsize=1.35\hsize}X *{7}{>{\hsize=0.9\hsize}X}}
  \toprule
$\alpha$ & $n$ & \multicolumn{7}{l}{Shifts $\gamma$}  \\
&     & 0 & 1/4 & 1/2 & 3/4 & 1 & 2 & 10 \\
\midrule
  0.70  &  3  & 5  & 5  & 5  & 5  & 5  & 5  & 5  \\ 
       &  4  & 19  & 19  & 19  & 19  & 19  & 19  & 19  \\ 
       &  6  & 5  & 5  & 5  & 5  & 5  & 5  & 5  \\ \addlinespace 
       &     & 29  & 29  & 29  & 29  & 29  & 29  & 29  \\ \midrule
 0.85  &  3  & 5  & 5  & 5  & 5  & 5  & 5  & 5  \\ 
       &  4  & 19  & 19  & 19  & 19  & 19  & 19  & 19  \\ 
       &  6  & 5  & 5  & 5  & 5  & 5  & 5  & 5  \\ \addlinespace 
       &     & 29  & 29  & 29  & 29  & 29  & 29  & 29  \\ \midrule
 0.90  &  3  & 5  & 5  & 5  & 5  & 5  & 5  & 5  \\ 
       &  4  & 18  & 19  & 19  & 19  & 19  & 19  & 19  \\ 
       &  6  & 5  & 5  & 5  & 5  & 5  & 5  & 5  \\ \addlinespace 
       &     & 28  & 29  & 29  & 29  & 29  & 29  & 29  \\ \midrule
 0.95  &  3  & 5  & 5  & 5  & 5  & 5  & 5  & 5  \\ 
       &  4  & 7  & 11  & 13  & 13  & 16  & 19  & 18  \\ 
       &  6  & 5  & 5  & 5  & 5  & 5  & 5  & 5  \\ \addlinespace 
       &     & 17  & 21  & 23  & 23  & 26  & 29  & 28  \\ \midrule
 0.99  &  3  & 4  & 5  & 5  & 5  & 5  & 5  & 5  \\ 
       &  4  & 0  & 1  & 1  & 2  & 2  & 2  & 2  \\ 
       &  6  & 1  & 1  & 1  & 2  & 2  & 2  & 1  \\ \addlinespace 
       &     & 5  & 7  & 7  & 9  & 9  & 9  & 8  \\ 
\bottomrule
\end{tabularx}
\end{table}

\subsection{Solver reliability}
\label{sec:solvers-perf}

In our final study, we utilize each method with the following default parameters: 
\begin{center}
 \begin{tabular}{lll}
  F & fixed point & 10,000 maximum iterations, $\vx_0 = \vv$  \\
  S & shifted & 10,000 maximum iterations, $\gamma = 1$, $\vx_0 = \vv$  \\
  IO & inner-outer & 1,000 outer iterations, internal tolerance $\eps$, $\vx_0 = \vv$  \\
  Inv & inverse & 1,000 iterations, $\vx_0 = \vv$  \\
  N & Newton & 1,000 iterations, projection step, $\vx_0 = (1-\alpha) \vv$ 
 \end{tabular}
\end{center}
We also evaluate each method with $10$ times the default number of iterations. 

The results of the evaluation are shown in Table~\ref{tab:methods} as $\alpha$ varies from $0.7$ to $0.99$. The fixed point method has the worst performance when $\alpha$ is large. Curiously, when $\alpha = 0.99$ the shifted method outperforms the inverse iteration, but when $\alpha = 0.95$ the inverse iteration outperforms the shifted iteration. This implies that the behavior and reliability of the methods is not monotonic in $\alpha$. While this fact is not overly surprising, it is pleasing to see a concrete example that might suggest some tweaks to the methods to improve their reliability. Overall, the inner-outer and Newton's method have the most reliable convergence on these difficult problems.

\begin{table}
\caption{Each row of this table reports the number of problems successfully solved by the various iterative methods in two cases: with their default parameters, and with 10 times the standard number of iterations. The values are reported for each value of $\alpha$ considered, as well as broken down into the different problem sizes considered. The columns are: F for the fixed-point, S for the shifted method, IO for the inner-outer, Inv for the inverse iteration, and N for Newton's method. }
\label{tab:methods}
\centering
\begin{tabularx}{\linewidth}{>{\hsize=1.5\hsize}X >{\hsize=1.5\hsize}X *{5}{>{\hsize=0.9\hsize}X} @{\qquad\qquad} *{5}{>{\hsize=0.9\hsize}X} }
\toprule
$\alpha$ & $n$ & \multicolumn{5}{l}{Method (defaults)} & \multicolumn{5}{l}{Method (Extra iteration)} \\
         &     & F & S & IO & Inv & N  & F & S & IO & Inv & N\\
\midrule
 0.70  &  3  & 5  & 5  & 5  & 5  & 5  & 5  & 5  & 5  & 5  & 5  \\ 
       &  4  & 19  & 19  & 19  & 19  & 19  & 19  & 19  & 19  & 19  & 19  \\ 
       &  6  & 5  & 5  & 5  & 5  & 5  & 5  & 5  & 5  & 5  & 5  \\ \addlinespace 
       &     & 29  & 29  & 29  & 29  & 29  & 29  & 29  & 29  & 29  & 29  \\ \midrule 
 0.85  &  3  & 5  & 5  & 5  & 5  & 5  & 5  & 5  & 5  & 5  & 5  \\ 
       &  4  & 19  & 19  & 19  & 19  & 19  & 19  & 19  & 19  & 19  & 19  \\ 
       &  6  & 5  & 5  & 5  & 5  & 5  & 5  & 5  & 5  & 5  & 5  \\ \addlinespace 
       &     & 29  & 29  & 29  & 29  & 29  & 29  & 29  & 29  & 29  & 29  \\ \midrule 
 0.90  &  3  & 5  & 5  & 5  & 5  & 5  & 5  & 5  & 5  & 5  & 5  \\ 
       &  4  & 18  & 19  & 19  & 19  & 19  & 18  & 19  & 19  & 19  & 19  \\ 
       &  6  & 5  & 5  & 5  & 5  & 5  & 5  & 5  & 5  & 5  & 5  \\ \addlinespace 
       &     & 28  & 29  & 29  & 29  & 29  & 28  & 29  & 29  & 29  & 29  \\ \midrule 
 0.95  &  3  & 5  & 5  & 5  & 5  & 5  & 5  & 5  & 5  & 5  & 5  \\ 
       &  4  & 7  & 16  & 18  & 19  & 19  & 8  & 16  & 19  & 19  & 19  \\ 
       &  6  & 5  & 5  & 5  & 5  & 5  & 5  & 5  & 5  & 5  & 5  \\ \addlinespace 
       &     & 17  & 26  & 28  & 29  & 29  & 18  & 26  & 29  & 29  & 29  \\ \midrule 
 0.99  &  3  & 4  & 5  & 5  & 5  & 5  & 4  & 5  & 5  & 5  & 5  \\ 
       &  4  & 0  & 2  & 15  & 1  & 19  & 0  & 2  & 17  & 1  & 19  \\ 
       &  6  & 1  & 2  & 3  & 1  & 4  & 2  & 3  & 4  & 3  & 4  \\ \addlinespace 
       &     & 5  & 9  & 23  & 7  & 28  & 6  & 10  & 26  & 9  & 28  \\  
\bottomrule			
\end{tabularx}
\end{table}

Newton's method, in fact, solves all but one instance: $\mR_{6,3}$ with $\alpha = 0.99$. We explore this problem in slightly more depth in Figure~\ref{fig:newton-failure}. This problem only has a single unique solution (based on our symbolic computation). However, none of the iterations will find it using the default settings --- all the methods are attracted to a point with a small residual and an indefinite Jacobian. We were able to find the true solution by using Newton's method with random starting points. It seems that the iterates need to approach the solution from on a rather precise trajectory in order to overcome an indefinite region. This problem should be a useful case for future algorithmic studies on the problem. 

\begin{figure}
\centering
 \includegraphics{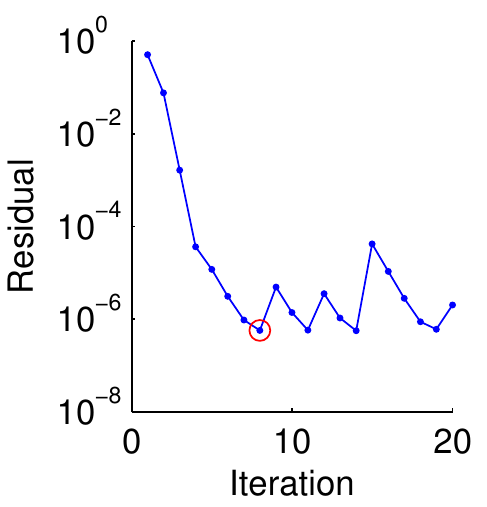}%
\begin{minipage}[b]{0.6\linewidth}

\normalsize The point ~~~~~~~~~ The eigenvalues ~~ The true solution

\scriptsize%
\begin{verbatim}
 0.199907259533067   0.980000000000000   0.043820721946272
 0.006619352098700   0.000064771773360   0.002224192630620
 0.116429656827957  -1.786544142144891   0.009256490884022
 0.223220491129316  -0.575965838505486   0.819168263512464
 0.079958855790239  -0.575965838505486   0.031217440669761
 0.373864384620721  -1.438690261635567   0.094312890356862
\end{verbatim}
\bigskip

\normalsize The Jacobian

\scriptsize
\begin{verbatim}
 -0.9712    0.2246    0.3496    0.1944    0.3395    0.7435
       0   -0.7299    0.0131         0    0.0824         0
  0.4781    0.1851   -0.9505         0    0.4621    0.2408
  0.0288    0.1851    0.0495    0.1822         0    0.4453
       0    0.4192    0.3701    0.0857   -0.5939    0.1581
  1.4443    0.6960    1.1482    0.5176    0.6899   -0.6077
\end{verbatim}\bigskip

\end{minipage}%

\caption{Newton's method on the non-convergent case of $\mR_{6,3}$ with $\alpha = 0.99$. The method repeatedly drops to a small residual before moving away from the solution. This happens because when the residual becomes small, then the Jacobian acquires a non-trivial positive eigenvalue as exemplified by the point with the red-circled residual. We show the Jacobian and eigenvalues at this point. It appears to be a pseudo-solution that attracts all of the algorithms. Using random starting points, Newton's method will sometimes generate the true solution, which is far from the the attracting point.}
\label{fig:newton-failure}
\end{figure}

\section{Discussion}

In this manuscript, we studied the higher-order PageRank problem as well as the multilinear PageRank problem. The higher-order PageRank problem behaves much like the standard PageRank problem: we always have guaranteed uniqueness and fast convergence. The multilinear PageRank problem, in contrast, only has uniqueness and fast convergence in a more narrow regime. Outside of that regime, existence of a solution is guaranteed, although uniqueness is not. As we were finalizing our manuscript for submission, we discovered an independent preprint that discusses some related results from an eigenvalue perspective~\cite{Chu-2014-eigenvalue}. 

For the multilinear PageRank problem, convergence of an iterative method outside of the uniqueness regime is highly dependent on the data. We created a test set based on problems where both the fixed-point and shifted fixed-point method fails. On these tough problems, both the inner-outer and Newton iterations had the best performance. This result suggests a two-phase approach to solving the problems: first try the simple shifted method. If that does not seem to converge, then use either a Newton or inner-outer iteration. Our empirical findings are limited to the third order case and we plan to revisit such strategies in the future when we consider large scale implementations of these methods on real-world problems --- the present efforts are focused on understanding what is and is not possible with the multilinear PageRank problem. This is also due to the observation that the multilinear PageRank problem is only interesting for massive problems. If $O(n^2)$ memory is available, then the higher-order PageRank vector should be used instead, unless there is a modeling reason to choose the multilinear PageRank formulation. 

Based on our theoretical results, we note that there seems to be a key transition for \emph{all} of the algorithms and theory that arises at the uniqueness threshold: $\alpha < 1/(m-1)$. We are currently trying to find algorithms with guaranteed convergence when $\alpha > 1/(m-1)$ but have not been successful yet. We plan to explore using sum-of-squares programming for this task in the future. Such an approach has given one of the first algorithms with good guarantees for the tensor eigenvalue problem~\cite{Nie-2013-sdp}.
 
\section*{Acknowledgments}
We are grateful to Austin Benson for suggesting an idea that led to the stochastic process as well as some preliminary comments on the manuscript. DFG would like to acknowledge support for NSF CCF-1149756. LHL gratefully acknowledges support for AFOSR FA9550-13-1-0133, NSF DMS-1209136, and NSF DMS-1057064.

\bibliographystyle{dgleich-bib}
\bibliography{all-bibliography}

\appendix

\section{Applying Li and Ng's results to multilinear PageRank}
\label{sec:pagerank-beta}

For the third-order tensor problem 
\[ \cmP \vx^2 = \vx \]
\citet{Li-2013-tensor-markov-chain} define a quantity called $\beta$ to determine if the solution is unique. (Their quantity was $\gamma$, but we use $\beta$ here to avoid confusion with the shifting parameter $\gamma$.) When $\beta > 1$, then the solution is unique and in this section, we show that $\beta > 1$ is a stronger condition than $\alpha < 1/2$. 
The scalar value $\beta$ ($0 \le \beta \le 4$) is defined:
\[
 \bigbetasub
\]
where $\langle n \rangle \equiv \{1,2,\cdots,n\}, S \subset \langle n \rangle,$ and $\bar{S} = \langle n \rangle \setminus S.$ Note that we divide this up into two components, $\beta_1$ and $\beta_2$ that both depend on the set $S$. 

When we apply their theory to multilinear PageRank, we study the problem: 
\[ \cmPbar \vx^2 = \vx \text{ where } \cel{\bar{P}}_{ijk} = \alpha \cel{P}_{ijk} + (1-\alpha) v_i. \]
The value of $\beta$ is a function of $\tensor{P}$ and clearly $\beta(\omega \tensor{P}) = \omega \beta(\tensor{P})$, 
where $\omega$ is a scalar. Generally, $\beta(\tensor{P} + \tensor{Q}) \neq \beta(\tensor{P}) + \beta(\tensor{Q})$ for arbitrary tensors $\tensor{P}$ and $\tensor{Q}$. However, the equation $\beta(\tensor{P} + \tensor{Q}) = \beta(\tensor{P}) + \beta(\tensor{Q})$ holds for the construction of $\cmPbar$ as we now show.

Let $\cmQ$ be the tensor where $\cQ_{ijk} = (1-\alpha)v_i$ and define $q_i = (1-\alpha) v_i$ to simplify the notation. Then $\cmPbar = \alpha \cmP + \cmQ$. Let us first consider $\beta_1$: 
\[ \begin{aligned}
    \beta_1(\alpha \cmP + \cmQ) &= \min_{k \in \langle n \rangle}\bigg[ \min_{j \in S} \sum_{i \in \bar{S}}\Big(\alpha \elm{P}_{ijk} + \elm{Q}_{ijk}\Big) + \min_{j \in \bar{S}} \sum_{i \in S}\Big(\alpha \elm{P}_{ijk} + \elm{Q}_{ijk}\Big) \bigg]\\
    &= \min_{k \in \langle n \rangle}\bigg[ \min_{j \in S} \sum_{i \in \bar{S}}\Big(\alpha \elm{P}_{ijk} + q_i\Big) + \min_{j \in \bar{S}} \sum_{i \in S}\Big(\alpha \elm{P}_{ijk} + q_{i}\Big) \bigg] \\
&= \min_{k \in \langle n \rangle}\bigg[ \Bigl( \min_{j \in S} \sum_{i \in \bar{S}} \alpha \elm{P}_{ijk} + \min_{j \in \bar{S}} \sum_{i \in S} \alpha \elm{P}_{ijk} \Bigr)  + \sum_i q_i\bigg] \\
&= \beta_1(\alpha \cmP) + \beta_1(\cmQ).
   \end{aligned} \]
By the same derivation, 
\[ \beta_2(\alpha \cmP + \cmQ) = \beta_2(\alpha \cmP) + \beta_2(\cmQ). \]
Now note that, because $\beta_1(\cmQ) = \beta_2(\cmQ) = 1-\alpha$ independently of the set $S$, we have 
\[ \beta(\alpha \cmP + \cmQ) = \alpha  \beta(\cmP) + 2(1-\alpha). \]

We are interested in the case that $\beta (\alpha \cmP + \cmQ) > 1$ to apply the uniqueness theorem.
Note that $\beta > 1$ can be true even if $\alpha > 1/2$. However, $\alpha < 1/2$ implies that $\beta > 1$.
Thus, the condition $\beta > 1$ is stronger.

\section{The tensor set} \label{app:problems}
The following problems gave us the tensors $\cmP$ for our experiments, after they were normalized to be column stochastic matrices.
\allowdisplaybreaks\setlength{\arraycolsep}{2pt}
\subsection{3 $\times$ 3 $\times$ 3}
\footnotesize
\begin{align*} 
\mR_{3,1} & = \left[ \scriptstyle\begin{array}{ccc|ccc|ccc}
1 & 1 & 1 & 1 & 0 & 0 & 0 & 0 & 0 \\
1 & 1 & 1 & 1 & 0 & 1 & 1 & 0 & 1 \\
1 & 1 & 1 & 1 & 1 & 1 & 0 & 1 & 0 \\
\end{array}
\right] \\ 
\mR_{3,2} & = \left[ \scriptstyle\begin{array}{ccc|ccc|ccc}
0 & 0 & 0 & 1 & 0 & 1 & 1 & 1 & 0 \\
0 & 0 & 0 & 0 & 1 & 1 & 0 & 0 & 0 \\
1 & 1 & 1 & 0 & 1 & 0 & 1 & 0 & 1 \\
\end{array}
\right] \\ 
\mR_{3,3} & = \left[ \scriptstyle\begin{array}{ccc|ccc|ccc}
0 & 1 & 0 & 1 & 0 & 1 & 1 & 1 & 0 \\
0 & 0 & 0 & 0 & 1 & 0 & 0 & 1 & 0 \\
1 & 1 & 1 & 1 & 1 & 0 & 1 & 0 & 1 \\
\end{array}
\right] \\ 
\mR_{3,4} & = \left[ \scriptstyle\begin{array}{ccc|ccc|ccc}
0 & 0 & 1 & 1 & 0 & 0 & 1 & 1 & 1 \\
0 & 0 & 1 & 1 & 0 & 0 & 0 & 0 & 1 \\
1 & 1 & 1 & 1 & 1 & 1 & 1 & 1 & 0 \\
\end{array}
\right] \\ 
\mR_{3,5} & = \left[ \scriptstyle\begin{array}{ccc|ccc|ccc}
0 & 0 & 0 & 0 & 0 & 0 & 1 & 0 & 1 \\
0 & 0 & 0 & 0 & 1 & 1 & 0 & 1 & 0 \\
1 & 1 & 1 & 1 & 0 & 0 & 0 & 0 & 0 \\
\end{array}
\right] \\ 
\end{align*} 

\subsection{4 $\times$ 4 $\times$ 4}
\footnotesize
\begin{align*} 
\mR_{4,1} & = \left[ \scriptstyle\begin{array}{cccc|cccc|cccc|cccc}
0 & 0 & 0 & 0 & 0 & 0 & 0 & 0 & 0 & 0 & 0 & 0 & 1 & 0 & 0 & 1 \\
0 & 0 & 0 & 0 & 0 & 1 & 0 & 1 & 0 & 1 & 0 & 0 & 0 & 1 & 0 & 0 \\
0 & 0 & 0 & 0 & 0 & 0 & 1 & 0 & 0 & 1 & 1 & 0 & 0 & 0 & 0 & 0 \\
1 & 1 & 1 & 1 & 1 & 0 & 0 & 0 & 1 & 0 & 0 & 1 & 1 & 0 & 1 & 0 \\
\end{array}
\right] \\ 
\mR_{4,2} & = \left[ \scriptstyle\begin{array}{cccc|cccc|cccc|cccc}
0 & 0 & 0 & 0 & 0 & 0 & 0 & 0 & 0 & 0 & 0 & 0 & 0 & 1 & 0 & 0 \\
0 & 0 & 0 & 0 & 1 & 1 & 1 & 0 & 0 & 1 & 0 & 0 & 0 & 1 & 0 & 0 \\
0 & 0 & 0 & 0 & 0 & 0 & 0 & 0 & 1 & 0 & 1 & 1 & 0 & 0 & 1 & 0 \\
1 & 1 & 1 & 1 & 0 & 0 & 0 & 1 & 0 & 0 & 0 & 0 & 1 & 0 & 0 & 1 \\
\end{array}
\right] \\ 
\mR_{4,3} & = \left[ \scriptstyle\begin{array}{cccc|cccc|cccc|cccc}
0 & 0 & 0 & 0 & 0 & 0 & 0 & 0 & 0 & 1 & 0 & 0 & 1 & 0 & 0 & 1 \\
0 & 0 & 0 & 0 & 1 & 1 & 1 & 0 & 0 & 1 & 0 & 0 & 0 & 1 & 0 & 0 \\
0 & 0 & 0 & 0 & 0 & 0 & 0 & 0 & 0 & 0 & 1 & 1 & 0 & 0 & 1 & 0 \\
1 & 1 & 1 & 1 & 0 & 0 & 0 & 1 & 1 & 0 & 0 & 0 & 0 & 1 & 0 & 0 \\
\end{array}
\right] \\ 
\mR_{4,4} & = \left[ \scriptstyle\begin{array}{cccc|cccc|cccc|cccc}
0 & 0 & 0 & 0 & 0 & 0 & 0 & 0 & 0 & 0 & 0 & 0 & 0 & 0 & 1 & 1 \\
0 & 0 & 0 & 0 & 0 & 1 & 0 & 1 & 0 & 0 & 0 & 0 & 0 & 1 & 0 & 0 \\
0 & 0 & 0 & 0 & 0 & 0 & 1 & 0 & 0 & 1 & 1 & 0 & 0 & 0 & 1 & 0 \\
1 & 1 & 1 & 1 & 1 & 0 & 0 & 0 & 1 & 0 & 0 & 1 & 1 & 0 & 0 & 0 \\
\end{array}
\right] \\ 
\mR_{4,5} & = \left[ \scriptstyle\begin{array}{cccc|cccc|cccc|cccc}
0 & 0 & 0 & 0 & 0 & 0 & 0 & 0 & 1 & 0 & 0 & 0 & 1 & 0 & 0 & 1 \\
0 & 0 & 0 & 0 & 0 & 1 & 0 & 1 & 0 & 0 & 0 & 0 & 0 & 1 & 0 & 0 \\
0 & 0 & 0 & 0 & 0 & 0 & 1 & 0 & 0 & 1 & 1 & 0 & 0 & 0 & 0 & 0 \\
1 & 1 & 1 & 1 & 1 & 0 & 0 & 0 & 0 & 0 & 0 & 1 & 0 & 0 & 1 & 1 \\
\end{array}
\right] \\ 
\mR_{4,6} & = \left[ \scriptstyle\begin{array}{cccc|cccc|cccc|cccc}
0 & 0 & 0 & 0 & 0 & 0 & 0 & 0 & 0 & 0 & 0 & 1 & 1 & 0 & 0 & 0 \\
0 & 0 & 0 & 0 & 0 & 1 & 0 & 1 & 0 & 0 & 0 & 0 & 0 & 1 & 0 & 0 \\
0 & 0 & 0 & 0 & 0 & 0 & 1 & 0 & 0 & 1 & 1 & 0 & 0 & 0 & 1 & 0 \\
1 & 1 & 1 & 1 & 1 & 0 & 0 & 0 & 1 & 0 & 0 & 0 & 0 & 0 & 1 & 1 \\
\end{array}
\right] \\ 
\mR_{4,7} & = \left[ \scriptstyle\begin{array}{cccc|cccc|cccc|cccc}
0 & 0 & 0 & 0 & 0 & 0 & 0 & 0 & 0 & 0 & 0 & 0 & 1 & 0 & 1 & 1 \\
0 & 0 & 0 & 0 & 0 & 1 & 0 & 1 & 0 & 1 & 0 & 0 & 1 & 1 & 0 & 0 \\
0 & 0 & 0 & 0 & 0 & 0 & 1 & 0 & 0 & 1 & 1 & 0 & 0 & 0 & 0 & 0 \\
1 & 1 & 1 & 1 & 1 & 0 & 0 & 0 & 1 & 0 & 0 & 1 & 0 & 0 & 1 & 0 \\
\end{array}
\right] \\ 
\mR_{4,8} & = \left[ \scriptstyle\begin{array}{cccc|cccc|cccc|cccc}
0 & 0 & 0 & 0 & 0 & 0 & 0 & 0 & 0 & 0 & 0 & 1 & 0 & 0 & 1 & 0 \\
0 & 0 & 0 & 0 & 0 & 1 & 0 & 1 & 0 & 0 & 0 & 0 & 0 & 1 & 0 & 0 \\
0 & 0 & 0 & 0 & 0 & 0 & 1 & 0 & 0 & 1 & 1 & 0 & 0 & 0 & 0 & 0 \\
1 & 1 & 1 & 1 & 1 & 0 & 0 & 0 & 1 & 0 & 0 & 0 & 1 & 0 & 0 & 1 \\
\end{array}
\right] \\ 
\mR_{4,9} & = \left[ \scriptstyle\begin{array}{cccc|cccc|cccc|cccc}
0 & 0 & 0 & 0 & 0 & 0 & 0 & 0 & 0 & 0 & 0 & 1 & 1 & 0 & 1 & 0 \\
0 & 0 & 0 & 0 & 0 & 1 & 0 & 1 & 0 & 1 & 0 & 0 & 0 & 1 & 0 & 0 \\
0 & 0 & 0 & 0 & 0 & 0 & 1 & 0 & 0 & 1 & 1 & 0 & 0 & 0 & 0 & 0 \\
1 & 1 & 1 & 1 & 1 & 0 & 0 & 0 & 1 & 0 & 0 & 0 & 0 & 0 & 0 & 1 \\
\end{array}
\right] \\ 
\mR_{4,10} & = \left[ \scriptstyle\begin{array}{cccc|cccc|cccc|cccc}
0 & 0 & 0 & 0 & 0 & 0 & 0 & 1 & 0 & 0 & 0 & 0 & 1 & 0 & 0 & 1 \\
0 & 0 & 0 & 0 & 0 & 1 & 1 & 0 & 0 & 1 & 0 & 0 & 0 & 1 & 0 & 0 \\
0 & 0 & 0 & 0 & 0 & 0 & 0 & 0 & 0 & 0 & 1 & 1 & 0 & 1 & 1 & 0 \\
1 & 1 & 1 & 1 & 1 & 0 & 0 & 0 & 1 & 0 & 0 & 0 & 0 & 0 & 0 & 0 \\
\end{array}
\right] \\ 
\mR_{4,11} & = \left[ \scriptstyle\begin{array}{cccc|cccc|cccc|cccc}
0 & 0 & 0 & 0 & 0 & 0 & 0 & 0 & 0 & 0 & 0 & 0 & 0 & 0 & 0 & 1 \\
0 & 0 & 0 & 0 & 0 & 1 & 0 & 1 & 0 & 1 & 0 & 0 & 0 & 1 & 0 & 0 \\
0 & 0 & 0 & 0 & 0 & 0 & 1 & 0 & 0 & 1 & 1 & 1 & 0 & 0 & 0 & 0 \\
1 & 1 & 1 & 1 & 1 & 0 & 0 & 0 & 1 & 0 & 0 & 0 & 1 & 1 & 1 & 1 \\
\end{array}
\right] \\ 
\mR_{4,12} & = \left[ \scriptstyle\begin{array}{cccc|cccc|cccc|cccc}
0 & 0 & 0 & 0 & 0 & 0 & 0 & 1 & 0 & 0 & 0 & 0 & 1 & 0 & 0 & 1 \\
0 & 0 & 0 & 0 & 0 & 1 & 1 & 0 & 0 & 1 & 0 & 0 & 0 & 0 & 0 & 0 \\
0 & 0 & 0 & 0 & 0 & 0 & 0 & 0 & 0 & 0 & 1 & 1 & 0 & 0 & 1 & 0 \\
1 & 1 & 1 & 1 & 1 & 0 & 0 & 0 & 1 & 0 & 0 & 1 & 1 & 1 & 0 & 1 \\
\end{array}
\right] \\ 
\mR_{4,13} & = \left[ \scriptstyle\begin{array}{cccc|cccc|cccc|cccc}
0 & 0 & 0 & 0 & 0 & 0 & 0 & 0 & 0 & 0 & 0 & 0 & 0 & 0 & 1 & 1 \\
0 & 0 & 0 & 0 & 0 & 1 & 0 & 1 & 0 & 1 & 0 & 0 & 0 & 1 & 0 & 0 \\
0 & 0 & 0 & 0 & 0 & 0 & 1 & 0 & 1 & 1 & 1 & 0 & 0 & 0 & 0 & 0 \\
1 & 1 & 1 & 1 & 1 & 0 & 0 & 0 & 0 & 0 & 0 & 1 & 1 & 0 & 1 & 0 \\
\end{array}
\right] \\ 
\mR_{4,14} & = \left[ \scriptstyle\begin{array}{cccc|cccc|cccc|cccc}
0 & 0 & 0 & 0 & 0 & 0 & 0 & 0 & 1 & 0 & 0 & 0 & 0 & 1 & 0 & 0 \\
0 & 0 & 0 & 0 & 1 & 1 & 1 & 0 & 0 & 1 & 0 & 0 & 0 & 0 & 0 & 0 \\
0 & 0 & 0 & 0 & 0 & 0 & 0 & 0 & 0 & 0 & 1 & 1 & 0 & 0 & 1 & 0 \\
1 & 1 & 1 & 1 & 0 & 0 & 0 & 1 & 0 & 0 & 0 & 0 & 1 & 0 & 1 & 1 \\
\end{array}
\right] \\ 
\mR_{4,15} & = \left[ \scriptstyle\begin{array}{cccc|cccc|cccc|cccc}
0 & 0 & 0 & 0 & 0 & 0 & 0 & 0 & 0 & 0 & 0 & 0 & 0 & 0 & 1 & 0 \\
0 & 0 & 0 & 0 & 0 & 1 & 0 & 1 & 0 & 0 & 0 & 0 & 0 & 1 & 0 & 0 \\
0 & 0 & 0 & 0 & 0 & 0 & 1 & 0 & 1 & 1 & 1 & 0 & 0 & 0 & 1 & 0 \\
1 & 1 & 1 & 1 & 1 & 0 & 0 & 0 & 0 & 0 & 0 & 1 & 1 & 0 & 0 & 1 \\
\end{array}
\right] \\ 
\mR_{4,16} & = \left[ \scriptstyle\begin{array}{cccc|cccc|cccc|cccc}
0 & 0 & 0 & 0 & 0 & 0 & 0 & 0 & 0 & 0 & 0 & 0 & 1 & 0 & 0 & 1 \\
0 & 0 & 0 & 0 & 0 & 1 & 1 & 0 & 0 & 1 & 0 & 0 & 0 & 1 & 0 & 0 \\
0 & 0 & 0 & 0 & 0 & 0 & 0 & 0 & 0 & 1 & 1 & 1 & 1 & 0 & 1 & 0 \\
1 & 1 & 1 & 1 & 1 & 0 & 0 & 1 & 1 & 0 & 0 & 0 & 0 & 1 & 0 & 0 \\
\end{array}
\right] \\ 
\mR_{4,17} & = \left[ \scriptstyle\begin{array}{cccc|cccc|cccc|cccc}
0 & 0 & 0 & 0 & 1 & 0 & 0 & 0 & 0 & 0 & 0 & 0 & 0 & 1 & 0 & 1 \\
0 & 0 & 0 & 0 & 0 & 1 & 1 & 0 & 0 & 1 & 0 & 0 & 0 & 0 & 0 & 0 \\
0 & 0 & 0 & 0 & 0 & 0 & 0 & 0 & 0 & 1 & 1 & 1 & 0 & 1 & 1 & 0 \\
1 & 1 & 1 & 1 & 0 & 0 & 0 & 1 & 1 & 0 & 0 & 0 & 1 & 1 & 1 & 0 \\
\end{array}
\right] \\ 
\mR_{4,18} & = \left[ \scriptstyle\begin{array}{cccc|cccc|cccc|cccc}
0 & 0 & 0 & 0 & 1 & 0 & 0 & 0 & 0 & 0 & 0 & 0 & 1 & 1 & 0 & 1 \\
0 & 0 & 0 & 0 & 0 & 1 & 1 & 0 & 0 & 1 & 0 & 0 & 0 & 0 & 0 & 0 \\
0 & 0 & 0 & 0 & 0 & 0 & 0 & 0 & 0 & 0 & 1 & 1 & 0 & 0 & 1 & 0 \\
1 & 1 & 1 & 1 & 0 & 0 & 0 & 1 & 1 & 1 & 0 & 0 & 0 & 0 & 0 & 0 \\
\end{array}
\right] \\ 
\mR_{4,19} & = \left[ \scriptstyle\begin{array}{cccc|cccc|cccc|cccc}
0 & 0 & 0 & 0 & 0 & 0 & 0 & 0 & 0 & 0 & 0 & 0 & 1 & 0 & 0 & 1 \\
0 & 0 & 0 & 0 & 0 & 1 & 0 & 1 & 1 & 0 & 0 & 0 & 1 & 1 & 0 & 0 \\
0 & 0 & 0 & 0 & 0 & 0 & 1 & 0 & 0 & 1 & 1 & 0 & 1 & 0 & 0 & 0 \\
1 & 1 & 1 & 1 & 1 & 0 & 0 & 0 & 1 & 0 & 0 & 1 & 1 & 0 & 1 & 0 \\
\end{array}
\right] \\ 
\end{align*} 

\subsection{6 $\times$ 6 $\times$ 6}
\footnotesize
\begin{align*} 
\mR_{6,1} & = \left[ \scriptstyle\begin{array}{cccccc|cccccc|cccccc|cccccc|cccccc|cccccc}
0 & 0 & 0 & 0 & 0 & 0 & 0 & 0 & 0 & 0 & 0 & 0 & 0 & 1 & 0 & 0 & 0 & 0 & 1 & 0 & 0 & 0 & 1 & 0 & 1 & 0 & 0 & 0 & 0 & 1 & 1 & 0 & 0 & 0 & 1 & 0 \\
0 & 0 & 0 & 0 & 0 & 0 & 0 & 0 & 0 & 0 & 0 & 0 & 0 & 0 & 0 & 0 & 0 & 0 & 0 & 1 & 0 & 0 & 0 & 0 & 1 & 0 & 0 & 0 & 0 & 0 & 1 & 0 & 0 & 0 & 0 & 0 \\
0 & 0 & 0 & 0 & 0 & 0 & 0 & 0 & 0 & 0 & 0 & 0 & 0 & 0 & 1 & 0 & 0 & 1 & 0 & 0 & 1 & 0 & 0 & 0 & 0 & 1 & 0 & 0 & 0 & 0 & 0 & 0 & 1 & 0 & 0 & 0 \\
0 & 0 & 0 & 0 & 0 & 0 & 0 & 1 & 1 & 0 & 1 & 0 & 1 & 0 & 0 & 0 & 0 & 0 & 0 & 1 & 0 & 1 & 0 & 1 & 0 & 1 & 0 & 0 & 0 & 0 & 0 & 0 & 0 & 1 & 1 & 0 \\
0 & 0 & 0 & 0 & 0 & 0 & 0 & 0 & 0 & 0 & 1 & 0 & 0 & 0 & 0 & 0 & 1 & 0 & 0 & 0 & 0 & 0 & 0 & 0 & 0 & 0 & 1 & 0 & 1 & 0 & 0 & 1 & 0 & 0 & 0 & 0 \\
1 & 1 & 1 & 1 & 1 & 1 & 1 & 0 & 0 & 1 & 0 & 1 & 0 & 0 & 0 & 1 & 0 & 1 & 0 & 1 & 0 & 0 & 0 & 0 & 0 & 1 & 0 & 1 & 0 & 0 & 0 & 1 & 0 & 0 & 0 & 1 \\
\end{array}
\right] \\ 
\mR_{6,2} & = \left[ \scriptstyle\begin{array}{cccccc|cccccc|cccccc|cccccc|cccccc|cccccc}
0 & 0 & 0 & 0 & 0 & 0 & 0 & 0 & 0 & 0 & 0 & 0 & 1 & 0 & 0 & 0 & 0 & 0 & 0 & 0 & 0 & 0 & 0 & 0 & 1 & 1 & 0 & 0 & 0 & 1 & 0 & 1 & 0 & 0 & 0 & 1 \\
0 & 0 & 0 & 0 & 0 & 0 & 0 & 0 & 0 & 0 & 0 & 0 & 0 & 0 & 0 & 1 & 0 & 0 & 0 & 0 & 0 & 0 & 0 & 0 & 0 & 0 & 1 & 0 & 0 & 1 & 1 & 0 & 0 & 0 & 0 & 0 \\
0 & 0 & 0 & 0 & 0 & 0 & 0 & 1 & 0 & 0 & 0 & 1 & 0 & 0 & 0 & 0 & 0 & 0 & 0 & 0 & 0 & 0 & 0 & 0 & 0 & 0 & 1 & 0 & 0 & 0 & 1 & 0 & 0 & 0 & 1 & 0 \\
0 & 0 & 0 & 0 & 0 & 0 & 1 & 0 & 0 & 0 & 0 & 0 & 1 & 0 & 0 & 0 & 0 & 0 & 0 & 0 & 0 & 1 & 0 & 1 & 0 & 0 & 0 & 1 & 0 & 0 & 0 & 1 & 0 & 1 & 1 & 0 \\
0 & 0 & 0 & 0 & 0 & 0 & 0 & 0 & 0 & 0 & 0 & 0 & 0 & 0 & 1 & 0 & 0 & 0 & 0 & 0 & 0 & 0 & 1 & 0 & 0 & 0 & 0 & 1 & 1 & 0 & 0 & 0 & 0 & 0 & 0 & 0 \\
1 & 1 & 1 & 1 & 1 & 1 & 0 & 1 & 1 & 1 & 1 & 0 & 0 & 1 & 1 & 0 & 1 & 1 & 1 & 1 & 1 & 0 & 0 & 0 & 0 & 0 & 0 & 0 & 0 & 0 & 0 & 0 & 1 & 0 & 0 & 0 \\
\end{array}
\right] \\ 
\mR_{6,3} & = \left[ \scriptstyle\begin{array}{cccccc|cccccc|cccccc|cccccc|cccccc|cccccc}
0 & 0 & 0 & 0 & 0 & 0 & 0 & 0 & 0 & 0 & 0 & 1 & 1 & 0 & 0 & 0 & 0 & 0 & 0 & 0 & 1 & 0 & 1 & 0 & 0 & 1 & 1 & 0 & 0 & 0 & 0 & 0 & 0 & 0 & 0 & 1 \\
0 & 0 & 0 & 0 & 0 & 0 & 0 & 0 & 1 & 0 & 0 & 0 & 0 & 1 & 0 & 0 & 0 & 0 & 0 & 0 & 0 & 0 & 0 & 0 & 0 & 1 & 0 & 0 & 1 & 0 & 0 & 0 & 0 & 0 & 0 & 0 \\
0 & 0 & 0 & 0 & 0 & 0 & 0 & 0 & 0 & 0 & 0 & 0 & 1 & 0 & 0 & 0 & 0 & 0 & 0 & 0 & 0 & 0 & 0 & 0 & 1 & 0 & 0 & 0 & 1 & 0 & 1 & 1 & 0 & 0 & 1 & 0 \\
0 & 0 & 0 & 0 & 0 & 0 & 0 & 0 & 0 & 0 & 0 & 0 & 1 & 0 & 0 & 0 & 0 & 0 & 0 & 0 & 0 & 1 & 0 & 1 & 0 & 0 & 0 & 0 & 0 & 0 & 0 & 1 & 0 & 1 & 0 & 0 \\
0 & 0 & 0 & 0 & 0 & 0 & 0 & 1 & 0 & 0 & 0 & 1 & 0 & 0 & 0 & 0 & 0 & 0 & 0 & 1 & 0 & 0 & 0 & 0 & 0 & 0 & 0 & 1 & 0 & 0 & 0 & 0 & 1 & 0 & 1 & 0 \\
1 & 1 & 1 & 1 & 1 & 1 & 1 & 0 & 0 & 1 & 1 & 0 & 1 & 0 & 1 & 1 & 1 & 1 & 1 & 0 & 0 & 0 & 0 & 0 & 0 & 0 & 0 & 0 & 0 & 1 & 0 & 0 & 0 & 0 & 0 & 0 \\
\end{array}
\right] \\ 
\mR_{6,4} & = \left[ \scriptstyle\begin{array}{cccccc|cccccc|cccccc|cccccc|cccccc|cccccc}
0 & 0 & 0 & 0 & 0 & 0 & 0 & 0 & 0 & 0 & 0 & 0 & 0 & 0 & 0 & 0 & 0 & 0 & 0 & 0 & 0 & 0 & 0 & 0 & 0 & 0 & 0 & 0 & 0 & 0 & 1 & 0 & 0 & 0 & 0 & 0 \\
0 & 0 & 0 & 0 & 0 & 0 & 0 & 0 & 0 & 0 & 0 & 0 & 1 & 0 & 0 & 0 & 1 & 0 & 1 & 0 & 0 & 0 & 0 & 0 & 1 & 1 & 1 & 0 & 0 & 0 & 0 & 1 & 1 & 0 & 1 & 0 \\
0 & 0 & 0 & 0 & 0 & 0 & 0 & 0 & 0 & 0 & 0 & 0 & 0 & 0 & 1 & 1 & 0 & 0 & 0 & 0 & 1 & 0 & 0 & 0 & 1 & 0 & 0 & 0 & 0 & 1 & 1 & 0 & 1 & 0 & 1 & 0 \\
0 & 0 & 0 & 0 & 0 & 0 & 0 & 0 & 1 & 0 & 1 & 0 & 1 & 0 & 0 & 0 & 1 & 0 & 1 & 0 & 1 & 1 & 1 & 1 & 0 & 0 & 0 & 1 & 1 & 1 & 0 & 0 & 0 & 1 & 0 & 0 \\
0 & 0 & 0 & 0 & 0 & 0 & 0 & 0 & 0 & 0 & 1 & 0 & 0 & 0 & 0 & 0 & 0 & 0 & 0 & 0 & 0 & 0 & 0 & 0 & 0 & 1 & 0 & 0 & 0 & 0 & 0 & 0 & 0 & 0 & 1 & 0 \\
1 & 1 & 1 & 1 & 1 & 1 & 1 & 1 & 0 & 1 & 0 & 1 & 0 & 1 & 0 & 0 & 0 & 1 & 0 & 1 & 0 & 0 & 0 & 0 & 0 & 0 & 0 & 0 & 0 & 1 & 0 & 0 & 0 & 1 & 0 & 1 \\
\end{array}
\right] \\ 
\mR_{6,5} & = \left[ \begin{array}{cccccc|cccccc|cccccc|cccccc|cccccc|cccccc}
0 & 0 & 0 & 0 & 0 & 0 & 0 & 0 & 0 & 0 & 0 & 0 & 0 & 0 & 0 & 0 & 0 & 1 & 0 & 0 & 1 & 0 & 0 & 0 & 0 & 0 & 0 & 0 & 0 & 0 & 0 & 0 & 0 & 0 & 1 & 0 \\
0 & 0 & 0 & 0 & 0 & 0 & 0 & 0 & 0 & 1 & 0 & 0 & 0 & 0 & 0 & 0 & 1 & 0 & 0 & 0 & 0 & 0 & 0 & 0 & 1 & 1 & 0 & 0 & 0 & 0 & 1 & 1 & 0 & 1 & 0 & 0 \\
0 & 0 & 0 & 0 & 0 & 0 & 0 & 0 & 0 & 0 & 0 & 0 & 1 & 0 & 1 & 0 & 0 & 1 & 0 & 0 & 0 & 0 & 0 & 0 & 1 & 1 & 0 & 0 & 0 & 0 & 0 & 0 & 1 & 0 & 0 & 0 \\
0 & 0 & 0 & 0 & 0 & 0 & 0 & 1 & 1 & 1 & 0 & 0 & 0 & 0 & 0 & 0 & 1 & 0 & 0 & 1 & 1 & 0 & 1 & 0 & 1 & 0 & 0 & 1 & 0 & 0 & 0 & 1 & 0 & 1 & 1 & 0 \\
0 & 0 & 0 & 0 & 0 & 0 & 0 & 0 & 0 & 0 & 0 & 0 & 0 & 0 & 0 & 0 & 0 & 0 & 0 & 0 & 1 & 0 & 1 & 0 & 0 & 1 & 0 & 0 & 1 & 1 & 1 & 0 & 0 & 0 & 0 & 0 \\
1 & 1 & 1 & 1 & 1 & 1 & 1 & 0 & 0 & 0 & 1 & 1 & 0 & 1 & 0 & 1 & 0 & 0 & 1 & 1 & 1 & 1 & 0 & 1 & 0 & 0 & 1 & 0 & 1 & 0 & 0 & 0 & 0 & 0 & 0 & 1 \\
\end{array}
\right] \\ 
\end{align*} 
 
 \end{document}